%Oct 2015
\documentclass[12pt]{amsart}

\usepackage{graphicx,color}
\usepackage{setspace}
\usepackage{amsmath, amssymb,amsthm, amsfonts}

\usepackage{accents}

\newtheorem{thm}{Theorem}[section]
\newtheorem{lma}{Lemma}[section]

\theoremstyle{definition}
\newtheorem{definition}{Definition}[section]

\theoremstyle{remark}
\newtheorem{remark}{Remark}[section]

\numberwithin{equation}{section}

\newcommand{\tr}{\mbox{tr}}

\renewcommand{\div}{\mbox{div}}

\newcommand{\Ric}{\mbox{Ric}}
\newcommand{\R}{\mathbb R}

\newcommand{\be}{\begin{equation}}
\newcommand{\ee}{\end{equation}}
\newcommand{\bee}{\begin{equation*}}
\newcommand{\eee}{\end{equation*}}

\def\p{\partial}

\def\la{\langle}
\def\ra{\rangle}
\def\lf{\left}
\def\ri{\right}
\def\Pi{\displaystyle{\mathbb{II}}}

\def\S{\Sigma}

\def\H{\mathbb{H}}

\def\vh{\vspace{.2cm}}

\def\m{\mathfrak{m}}

\def\l{\lambda}
\def\e{\epsilon}

\def\wt{\widetilde }

\def\ve{\varepsilon}

\def\k{\kappa}
\def\ko{\kappa^{(0)}}

\def\ringA{\accentset{\circ}{A}}

\begin{document}
\title[]
{Quasi-local mass integrals and the \\  total mass}

\author{Pengzi Miao$^1$}
\address[Pengzi Miao]{Department of Mathematics, University of Miami, Coral Gables, FL 33146, USA.}
\email{pengzim@math.miami.edu}
\thanks{$^1$Research partially supported by Simons Foundation Collaboration Grant for Mathematicians \#281105.}

\author{Luen-Fai Tam$^2$}
\address[Luen-Fai Tam]{The Institute of Mathematical Sciences and Department of
 Mathematics, The Chinese University of Hong Kong, Shatin, Hong Kong, China.}
 \email{lftam@math.cuhk.edu.hk}
\thanks{$^2$Research partially supported by Hong Kong RGC General Research Fund \#CUHK 14305114}
\author{Naqing Xie$^3$}
\address[Naqing Xie]{School of Mathematical Sciences, Fudan
University, Shanghai 200433, China.}
\email{nqxie@fudan.edu.cn}
\thanks{$^3$Research partially supported by the National Science Foundation of China \#11421061}
\renewcommand{\subjclassname}{
  \textup{2010} Mathematics Subject Classification}
\subjclass[2010]{Primary 83C99; Secondary 53C20}

%\date{October, 2015}

\begin{abstract}
On asymptotically flat and asymptotically hyperbolic manifolds,
   by evaluating the total mass via the Ricci tensor,
we show that the   limits  of certain Brown-York type
and Hawking type
quasi-local mass integrals
equal  the total mass of the manifold   in all dimensions.
\end{abstract}

\keywords{Scalar curvature, quasi-local mass}

\maketitle

\markboth{Pengzi Miao, Luen-Fai Tam and Naqing Xie}
{Quasi-local mass integrals and the total mass}

\section{introduction}
In this work, we
discuss the relation between certain quasi-local mass integrals and
the total mass of asymptotically flat and asymptotically hyperbolic manifolds.

First we  introduce some notations  and recall some definitions.
Let $(M^n,g)$ be a Riemannian manifold of dimension $ n \ge 3$.
For $ \l = 0$  or $ -1$,  let
\be\label{e-Gg-intro}
G^g_\lambda=\Ric(g)-\frac 12 \lf[ \mathcal{S}_g -\lambda(n-1)(n-2) \ri] g ,
\ee
where     $\Ric(g)$ and $\mathcal{S}_g$ are the Ricci tensor and the scalar curvature of $g$,  respectively.
 
Throughout the paper, $b_n$ and $c_n$ will  always denote the constants:
\be\label{e-bc}
\left\{
  \begin{array}{ll}
    b_n= &  \frac1{ 2 (n-1)  \omega_{n-1}}, \\
     c_n= &   \frac{1}{(n-1)(n-2)\omega_{n-1}},
  \end{array}
\right.
\ee
where $\omega_{n-1}$ is the volume  of the unit sphere $ \mathbb{S}^{n-1} $  in $\R^n$. 

\begin{definition}\label{def-AF}
$(M^n,g)$ is an {\it asymptotically flat} (AF) manifold  if, outside a compact set,
$M^n$ is diffeomorphic to  $ \R^n  \setminus \{ | x | \le r_0 \}$ for some $r_0>0$
and, with respect to the coordinates $ x = (x^1, \ldots, x^n )$ on $ \R^n$, the metric components $\{ g_{ij} \} $  satisfy
\be \label{eq-AF}
g_{ij} - \delta_{ij} = O ( | x |^{- \tau} ) ,  \p g_{ij} = O ( |x|^{-1-\tau}), \  \p^2 g_{ij} = O (|x|^{-2 - \tau} )
\ee
for some $\tau>\frac {n-2}{2}$. Here $ \p $ denotes the partial differentiation on $ \R^n$.
Moreover, one assumes the scalar curvature $\mathcal{S}_g$  is  in $L^1(\R^n)$.
\end{definition}

On an asymptotically flat $(M^n, g)$, the total mass  (or  the ADM mass) $\m$ (\cite{ADM61})  is defined as:

\be\label{e-ADMmass}
\m=b_n\lim_{r\to\infty}\int_{S_r}\lf(g_{ij,i}-g_{ii,j}\ri)\nu^j_ed\sigma_e ,
\ee
where $S_r=\{|x|=r\}$,  $\nu_e$ is the unit outward normal  and $d\sigma_e$ is the area element on $S_r$  both with respect to the Euclidean metric $g_e$.

The definitions  of an asymptotically hyperbolic manifold and its mass  are more 
elaborate (cf. \cite{ChruscielHerzlich, Wang, Zhang}). 
Here we use the most general form given in \cite{ChruscielHerzlich}. 
 Let $\H^n$  denote   the standard hyperbolic space.
The  metric $g_0$ on $\H^n$ can be written as 
$$
g_0=d\rho^2+ (\sinh \rho)^2  h_0,
$$
where  $\rho$ is  the $g_0$-distance to a fixed point $o$ and
$h_0$ is  the standard metric on $\mathbb{S}^{n-1}$.
Let $\ve_0=\p_\rho$, $\ve_\alpha =\frac1{\sinh\rho}f_\alpha $, $\alpha =1,\dots,n-1$, where $\{f_\alpha \}_{1\le \alpha \le n-1}$ is a local orthonormal
frame on $\mathbb{S}^{n-1}$,
then $\{ \ve_i \}_{0 \le i \le n-1 }$ form a local  orthonormal  frame on $\H^n$.
By abuse of notation, we  let $\S_\rho$ denote the geodesic sphere   in $\H^n$  of radius $\rho$ centered at $o$.

\begin{definition}\label{d-AH}  $(M^n,g)$ is called  {\it asymptotically hyperbolic} (AH)
if, outside a compact set, $M$ is diffeomorphic to the exterior of  some geodesic sphere  $\S_{\rho_0}$  in $\H^n$ such that
the metric components  $ g_{ij} =  g(\ve_i,\ve_j)$,  $0 \le i, j \le n-1$,  satisfy
\be
|g_{ij}-\delta_{ij}|=O(e^{-\tau \rho}),\ |\ve_k(g_{ij})|=O(e^{-\tau \rho}), \ |\ve_k(\ve_l(g_{ij}))|=(e^{-\tau \rho})
\ee
for some $\tau>\frac n2$. Moreover, one assumes $\mathcal{S}_g+n(n-1)$ is in
$L^1(\H^n, e^\rho dv_0)$,  where  $dv_0$ is the volume element of $g_0$ and $\mathcal{S}_g$ is the scalar curvature of $g$.
\end{definition}

On an asymptotically hyperbolic $(M^n, g)$,
by the results in \cite{ChruscielHerzlich},
  its mass $\mathbf{M}(g)$ is  a linear function  on the kernel of $(D\mathcal{S})_{g_0}^*$,
 where  $ (D\mathcal{S})_{g_0}^* $ is  the formal adjoint of the linearization of the scalar curvature  at $g_0$.
   Let $ \theta=(\theta^1,\dots,\theta^n)\in \mathbb{S}^{n-1}\subset \R^n$,
    the functions
   \be \label{e-def-Vi}
   V^{(0)}=\cosh \rho , \ \ V^{(j )}=\theta^j  \sinh \rho , \ 1 \le j \le n
   \ee
   then form a basis of   the kernel of  $(D\mathcal{S})_{g_0}^*$.
$ \mathbf{M} (g)$ is  defined  by 
  \be
  \begin{split}
 \mathbf{M} (V^{(i)} )  
= & \  b_n  \lim_{\rho \rightarrow \infty} \int_{\Sigma_\rho} \lf[ V^{(i)} ( \div_0 h  - d \tr_0 h ) \ri. \\
& \ \hspace{2cm}  \lf. - h (\nabla_0 V^{(i)}, \cdot) 
+ ( \tr_0 h ) d V^{(i)} \ri] ( \tilde \nu)  d \sigma_0 , 
  \end{split}
  \ee
  where $h= g - g_0$, $\tilde \nu$ is the $g_0$-unit outward normal to $\Sigma_\rho$, $d \sigma_0$ 
  is the volume element on $\Sigma_\rho$ of the metric induced from $g_0$, 
  and $\div_0$, $ \tr_0$, $\nabla_0$ denote the divergence, trace, the covariant derivative
  with respect to $g_0$, respectively. 
  We refer readers to  \cite{ChruscielHerzlich} (also see \cite{Michel}) 
  for a well explained motivation behind this definition. 
   
For our purpose in this work, we make use of
 the following formulae,
available in the 
literature,
which compute   $ \m $  and
$ \mathbf{M}(g)(V^{(i)}) $  in terms of the tensor $G_\l^g$ defined in \eqref{e-Gg-intro}.

  \begin{thm}\label{t-mass}
  \begin{enumerate}

                             \item [(i)] Suppose $(M^n,g)$ is an AF manifold, then
                             $$
                             \m= - c_n\lim_{r\to\infty}\int_{S_r}G^g_0 (X,\nu_g)d\sigma_g.
                             $$
                            Here $\nu_g$ is unit outward normal to $S_r$ in $(M^n, g)$,
                             $d\sigma_g$ is the volume element of $S_r$ with
                             respect to the metric induced by $g$,     $X=\sum_{i=1}^nx^i\frac{\p}{\p x^i}$  is the  conformal  radial Killing vector field on
                             $\R^n$.

 \item [(ii)]  Suppose $(M^n,g)$ is an AH  manifold.
 Then,  for each $ 0 \le i\le n$,
$$
\mathbf{M}(g) (V^{ (i)})= - c_n\lim_{ \rho  \to\infty}\int_{\S_\rho}G^g_{-1} (X^{(i)},\nu_g)d\sigma_g .
$$
Here $\{ X^{(i)} \}$ are the conformal Killing vector fields  on $\H^n$,   given by
$$ X^{(0)}=x^k\frac{\p}{\p x^k}, \ \   X^{(j)}=\frac{\p}{\p x^j}, \ 1\le j \le n  $$
in the ball model $ (B^n, g_0)$ of $\H^n$ where
${B}^n=\{  x \in \R^n|\ |x|<1\}$ and
$  (g_0)_{ij}=\frac{4}{(1-|x|^2)^2} \delta_{ij} $,
$ \{ V^{(i)} \}$ are functions on $\H^n$ defined by \eqref{e-def-Vi}, which on $(B^n, g_0)$  take the form of
$$
 V^{(0)} = \frac{1+r^2}{1-r^2} , \ \ V^{(i)} = \frac{2 x^i}{1-r^2}  , \ 1 \le i \le n
$$
 with $ r = | x | $, $\S_\rho $ is  the  geodesic sphere  of radius $\rho$  centered at $x=0$  in $({B}^n, g_0)$,
$\nu_g$ is  the outward unit normal to  $\S_\rho $ with respect to  $g$,  and $d\sigma_g$ is the volume element on $\S_\rho $
of the metric induced by $g$.

 \end{enumerate}

  \end{thm}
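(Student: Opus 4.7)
The plan for both parts is to expand the integrand around the reference metric --- Euclidean $g_e$ for (i), hyperbolic $g_0$ for (ii) --- and identify the leading linear contribution in $h = g - g_{\rm ref}$ with the classical boundary integrand for the mass, with the quadratic remainder absorbed into $o(1)$ by the decay rate. A useful first observation is that $G^{g_{\rm ref}}_\lambda = 0$ in both cases: trivially for $\lambda = 0$ and $g_e$, and for $\lambda = -1$ and $g_0$ because $\Ric(g_0) = -(n-1) g_0$ and $\mathcal{S}_{g_0} = -n(n-1)$ yield $\mathcal{S}_{g_0} - \lambda(n-1)(n-2) = -2(n-1)$, so $G^{g_0}_{-1} = 0$. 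One may therefore write $G^g_\lambda = L_\lambda(h) + Q_\lambda$, with $L_\lambda$ a second-order linear operator in $h$ and $Q_\lambda = O(|h|\,|\nabla^2 h| + |\nabla h|^2)$ in the reference norm.

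For (i), the linearization is the standard
\be\label{plan-lin}
2 L_0(h)_{ij} = \p_i \p^k h_{jk} + \p_j \p^k h_{ik} - \Delta h_{ij} - \p_i \p_j \tr h - \delta_{ij}\lf(\p^k \p^\ell h_{k\ell} - \Delta \tr h\ri).
\ee
Under $\tau > (n-2)/2$, the contribution of $Q_0$ to $\int_{S_r} G^g_0(X, \nu_g) \, d\sigma_g$ is $O(r^{n-2-2\tau}) \to 0$, so only $L_0(h)$ survives in the limit. I would then contract $L_0(h)$ against $X^i = x^i$ and $\nu_e = x/r$ and apply Stokes' theorem on the closed sphere $S_r$ --- using $\int_{S_r} \div_{S_r}(\cdot) \, d\sigma_e = 0$ and $x^i \p_i = r \p_r$ --- to rearrange the integrand. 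Since $c_n/b_n = 2/(n-2)$, the identity to establish is
$$-c_n \int_{S_r} L_0(h)(X, \nu_e) \, d\sigma_e = b_n \int_{S_r} (h_{ij,i} - h_{ii,j}) \nu^j_e \, d\sigma_e + o(1),$$
which on letting $r \to \infty$ yields (i).

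For (ii), the same scheme applies with $g_0$-covariant derivatives replacing partials and $\S_\rho$ replacing $S_r$. The decisive input is the conformal-Killing identity $\mathcal{L}_{X^{(i)}} g_0 = 2 V^{(i)} g_0$, easily verified on the ball model. After expanding $L_{-1}(h)(X^{(i)}, \nu_{g_0})$ and integrating by parts on $\S_\rho$, the factors $V^{(i)}(\div_0 h - d\tr_0 h)$, $h(\nabla_0 V^{(i)}, \cdot)$, and $(\tr_0 h)\, dV^{(i)}$ all emerge through the conformal-Killing relation and reassemble into the Chru\'sciel--Herzlich integrand up to the normalization $-c_n/b_n$. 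The quadratic error $Q_{-1}$ is controlled by pairing $|Q_{-1}|_{g_0} = O(e^{-2\tau\rho})$ with $|X^{(i)}|_{g_0} \sim e^\rho$ and $\text{area}(\S_\rho) \sim e^{(n-1)\rho}$, producing $O(e^{(n-2\tau)\rho})$, which vanishes since $\tau > n/2$.

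The principal obstacle, in both parts, is the algebraic reorganization of $L_\lambda(h)(X, \nu)$ into the classical integrand after the sphere integration by parts --- routine but bookkeeping-intensive, and more delicate in (ii), where the conformal-Killing identity and the hyperbolic geometry of $\S_\rho$ must be tracked simultaneously. The decay control of $Q_\lambda$, by contrast, is purely an exponent count.
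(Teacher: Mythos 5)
The paper does not prove Theorem \ref{t-mass} at all: part (i) is quoted from the literature (\cite{Huang-ICCM, MiaoTam2015, Herzlich, WangWu2015}) and part (ii) from \cite{Herzlich}, so there is no in-paper proof to compare against. Your outline is, in architecture, exactly the strategy of those references: observe $G^{g_{\rm ref}}_\lambda=0$, split $G^g_\lambda=L_\lambda(h)+Q_\lambda$, kill $Q_\lambda$ (and the $\nu_g-\nu_{\rm ref}$, $d\sigma_g-d\sigma_{\rm ref}$ corrections, which you should mention) by the exponent count $\tau>\frac{n-2}{2}$, resp.\ $\tau>\frac n2$, and match the linear term with the classical charge integrand. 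The constants check out ($c_n/b_n=2/(n-2)$), and the conformal-Killing identity $\mathcal{L}_{X^{(i)}}g_0=2V^{(i)}g_0$ you invoke is correct.

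Two steps are under-specified enough to count as gaps. First, in (i) the identity $\int_{S_r}L_0(h)(X,\nu_e)\,d\sigma_e=-\frac{n-2}{2}\int_{S_r}(h_{ij,i}-h_{ii,j})\nu_e^j\,d\sigma_e+o(1)$ is \emph{not} obtainable by tangential Stokes on the closed sphere alone: after splitting $\p=\p^{\rm tan}+\nu_e\p_r$ one is left with radial-derivative terms that tangential integration by parts cannot remove. The clean mechanism is that the vector field $P_j:=L_0(h)_{ij}x^i$ satisfies $\p_jP_j=-\frac{n-2}{2}\,\mathcal{S}^{(1)}$ with $\mathcal{S}^{(1)}=h_{ij,ij}-h_{ii,jj}=\p_j(h_{ij,i}-h_{ii,j})$, so $P_j+\frac{n-2}{2}(h_{ij,i}-h_{ii,j})$ is identically divergence-free; extending $h$ smoothly to all of $\R^n$ and applying the divergence theorem over the enclosed ball gives the identity exactly (this is precisely the device the paper itself uses in \eqref{e-2nd-term-2} for a related term). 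Second, in (ii) the conformal-Killing relation is not the whole "decisive input": one also needs $\nabla_0^2V^{(i)}=V^{(i)}g_0$ (i.e.\ $V^{(i)}\in\ker(D\mathcal{S})^*_{g_0}$) to reassemble the three terms of the Chru\'sciel--Herzlich integrand, and for $1\le j\le n$ the field $X^{(j)}=\p_{x^j}$ differs from $\nabla_0V^{(j)}$ by a genuine Killing field of $g_0$, whose contribution must be shown to vanish separately (this is the cut-off/annulus argument of \cite{Herzlich} that the paper reproduces in Lemma \ref{l-AH-6}). With these two points supplied, your plan matches the cited proofs.
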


 We emphasize that Theorem \ref{t-mass} (i) is
widely known in the relativistic community (cf. \cite{Ashtekar-Hansen, Chrusciel})
 and a  proof of it can be found
in \cite{Huang-ICCM, MiaoTam2015, Herzlich, WangWu2015}.
Theorem \ref{t-mass} (ii) was recently proved by Herzlich \cite{Herzlich}.

In this paper, we give applications of  the formulae in Theorem \ref{t-mass}. 
Our main results  are the following:

\begin{thm}  \label{t-intro-1}
Let $(M^n,g)$ be  an asymptotically flat manifold.
Let $ \{ \Sigma_\rho \}$ be a family of nearly round hypersurfaces in $(M^n, g)$ (see Definition \ref{def-nearly-round}).
Then
\be \label{eq-thm-S-H-intro}
 \lim_{\rho \to \infty}  \frac{c_n}{2}  \lf(\frac{|\Sigma_\rho| }{\omega_{n-1}}\ri)^{\frac1{n-1}}
 \int_{\S_\rho}   \lf(    S^\rho - \frac{n-2}{n-1} H^2   \ri) d\sigma_\rho  =  \m  .
 \ee
If in addition  $ \Sigma_\rho $ can be isometrically embedded in $ \R^n$
when $\rho$ is   sufficiently large, which is automatically satisfied if  $n=3$,
then
\be \label{eq-thm-BY-intro}
  \lim_{\rho \to \infty}  2 b_n \int_{\S_\rho}  (H_0 - H )d\sigma_\rho   =  \m
\ee
and
\be \label{eq-thm-sigma-2-intro}
  \lim_{\rho \to \infty}  c_n    \lf(\frac{|\Sigma_\rho| }{\omega_{n-1}}\ri)^{\frac1{n-1}}
   \int_{\S_\rho} \sum_{\alpha < \beta} \lf( \ko_\alpha \ko_\beta  - \kappa_\alpha \kappa_\beta \ri) d \sigma_\rho
=  \m   .
\ee
Here  $ | \Sigma_\rho |$, $ S^\rho$, $ d \sigma_\rho$
 are  the volume, the scalar curvature, the volume element   of $\Sigma_\rho$, respectively;
 $H$, $ \{ \kappa_\alpha\}$  are the mean curvature, the principal curvatures  of $\Sigma_\rho$ in $ (M^n ,g )$,  respectively;  $H_0$,  $ \{ \ko_\alpha\}$  are the mean curvature,  the principal curvatures
  of the isometric embedding of $\Sigma_\rho$ in $ \R^n$, respectively;
 and $\m $ is the total mass of $(M^n, g)$.
\end{thm}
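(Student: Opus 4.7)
The plan is to reduce all three limits to a single asymptotic identity involving the Einstein tensor $G_0^g$, and then invoke Theorem \ref{t-mass} (i) to recover $\m$. With $r_\rho = (|\Sigma_\rho|/\omega_{n-1})^{1/(n-1)}$, the central step is to establish
\begin{equation*}
\m = -c_n \lim_{\rho\to\infty} r_\rho \int_{\Sigma_\rho} G_0^g(\nu_g,\nu_g)\, d\sigma_g.
\end{equation*}
Starting from Theorem \ref{t-mass} (i), I would apply the divergence theorem to the one-form $G_0^g(X,\cdot)$ on the annular region bounded by $\Sigma_\rho$ and a large coordinate sphere $S_R$. Two facts make this work: $G_0^g$ is divergence-free by the contracted second Bianchi identity, and $\nabla^g X = \mathrm{Id} + O(|x|^{-\tau})$. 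Together they reduce the divergence of $G_0^g(X,\cdot)$ to $-\tfrac{n-2}{2}\mathcal S_g$ plus lower-order integrable terms, and the volume integral vanishes thanks to $\mathcal S_g\in L^1$ and $|G_0^g| = O(|x|^{-2-\tau})$. The nearly round hypothesis then allows $X$ to be replaced by $r_\rho \nu_g$ on $\Sigma_\rho$ at leading order, yielding the displayed identity.

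Next, I would rewrite the three integrands via the Gauss equation. Since $G_0^g(\nu_g,\nu_g) = \Ric(\nu_g,\nu_g) - \tfrac12 \mathcal S_g$, the Gauss formula for $\Sigma_\rho\subset (M^n,g)$ gives $S^\rho = -2G_0^g(\nu_g,\nu_g) + H^2 - |A|^2$, whence
\begin{equation*}
S^\rho - \tfrac{n-2}{n-1}H^2 = -2 G_0^g(\nu_g,\nu_g) - \lf(|A|^2 - \tfrac{1}{n-1}H^2\ri).
\end{equation*}
Comparing with the flat Gauss identity $2\sum_{\alpha<\beta}\ko_\alpha\ko_\beta = H_0^2 - |A_0|^2 = S^\rho$ (same intrinsic scalar curvature since the embedding into $\R^n$ is isometric),
\begin{equation*}
\sum_{\alpha<\beta}\lf(\ko_\alpha\ko_\beta - \kappa_\alpha\kappa_\beta\ri) = -G_0^g(\nu_g,\nu_g).
\end{equation*}
Both \eqref{eq-thm-S-H-intro} and \eqref{eq-thm-sigma-2-intro} then reduce to the central identity, provided the traceless contribution $r_\rho\int_{\Sigma_\rho}(|A|^2 - \tfrac{1}{n-1}H^2)\, d\sigma_g \to 0$; this is encoded in the nearly round condition, as the principal curvatures of $\Sigma_\rho$ asymptote to $r_\rho^{-1}$ with a lower-order spread.

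For the Brown-York identity \eqref{eq-thm-BY-intro}, subtracting the two Gauss equations yields $H_0^2 - H^2 = |A_0|^2 - |A|^2 - 2G_0^g(\nu_g,\nu_g)$. Decomposing $|A|^2 = \tfrac{1}{n-1}H^2 + |\ringA|^2$ and likewise for $A_0$, and using that both $|\ringA|^2$ and $|\ringA_0|^2$ are lower order for nearly round $\Sigma_\rho$, one obtains $\tfrac{n-2}{n-1}(H_0^2 - H^2) = -2G_0^g(\nu_g,\nu_g) + (|\ringA_0|^2 - |\ringA|^2)$. Dividing by $H_0 + H \sim 2(n-1)/r_\rho$ gives
\begin{equation*}
H_0 - H = -\frac{r_\rho}{n-2}G_0^g(\nu_g,\nu_g) + \text{lower order}.
\end{equation*}
Multiplying by $2b_n$ and using the numerical identity $2b_n/(n-2) = c_n$ reduces \eqref{eq-thm-BY-intro} to the central step as well. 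I expect the main obstacle to be the central step itself: verifying that $X\cdot \nu_g$ can be replaced by $r_\rho$ uniformly on $\Sigma_\rho$, and controlling the traceless second fundamental forms $|\ringA|^2$ and $|\ringA_0|^2$ at the correct rate, must be extracted from Definition \ref{def-nearly-round}, and this is where the precise notion of \emph{nearly round} plays a decisive role.
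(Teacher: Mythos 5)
Your overall skeleton coincides with the paper's: everything is funneled into the single identity $\int_{\Sigma_\rho} G_0^g(X,\nu_g)\,d\sigma_g = r_\rho \int_{\Sigma_\rho} G_0^g(\nu_g,\nu_g)\,d\sigma_g + o(1)$, after which the Gauss-equation algebra you carry out is correct (your Brown--York step, dividing $H_0^2-H^2$ by $H_0+H$, is a harmless variant of the paper's telescoping of $\kappa_\alpha\kappa_\beta-\ko_\alpha\ko_\beta$). However, the two items you defer to "the precise notion of nearly round" are not routine consequences of Definition \ref{def-nearly-round}; they are the actual content of the proof, and as written your argument does not close.

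First, the replacement of $X$ by $r_\rho\nu_g$. Nearly round surfaces need not be centered at the coordinate origin: condition (i) only forces $|x|\sim\rho$, and what the definition yields is $X=\mathbf{a}(\rho)+r_\rho\hat\nu+O(\rho^{1-\tau})$ for some translation vector with $|\mathbf{a}(\rho)|=O(\rho)$ (think of round spheres of radius $\rho$ centered at $\tfrac{\rho}{2}e_1$). The $O(\rho^{1-\tau})$ error contributes $O(\rho^{n-2-2\tau})=o(1)$, but the term $\int_{\Sigma_\rho}G_0^g(\mathbf{a}(\rho),\nu_g)\,d\sigma_g$ is a priori only $O(|\mathbf{a}|\cdot\rho^{-2-\tau}\cdot\rho^{n-1})=O(\rho^{n-2-\tau})$, which does \emph{not} tend to zero when $\tau\le n-2$ --- and only $\tau>\tfrac{n-2}{2}$ is assumed. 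Killing this term is a genuine step: one must write $2G_0^g(\mathbf{a},\nu)$ via the linearized expressions for $\Ric$ and $\mathcal{S}_g$ and integrate by parts twice over the region enclosed by $\Sigma_\rho$, whereupon the two linear-in-$g$ groups of terms cancel exactly and only the quadratic remainder $O(\rho^{n-2-2\tau})=o(1)$ survives. Your annulus/divergence argument (which in effect reproves the cited extension of Theorem \ref{t-mass}(i) to general exhausting hypersurfaces) does not address this. Second, the bound $|\ringA_0|^2=O(\rho^{-2-2\tau})$, equivalently $\ko_\alpha=r_\rho^{-1}+O(\rho^{-1-\tau})$, for the isometric embedding into $\R^n$ is not "encoded in the nearly round condition": the definition controls the extrinsic geometry of $\Sigma_\rho$ in $(M^n,g)$, while the embedded image is determined only by the intrinsic metric. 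The Gauss equation gives $\ko_\alpha\ko_\beta=r_\rho^{-2}+O(\rho^{-2-\tau})$ for $\alpha\ne\beta$, and passing from these products to the individual $\ko_\alpha$ requires an argument (selecting the correct root of $\ko_\alpha^2-H_0\ko_\alpha+R^\rho_{\alpha\alpha}=0$ and ruling out the other by a counting/contradiction argument when $n\ge4$; for $n=3$ one must invoke the Shi--Wang--Wu estimate for convex embeddings). Both your \eqref{eq-thm-BY-intro} and \eqref{eq-thm-sigma-2-intro} depend on this estimate, so the proposal is incomplete without supplying it.
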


\begin{thm}\label{t-intro-2}
Let  $(M^n,g)$ be an asymptotically hyperbolic manifold with suitable decay
rate $\tau$ (specified in  Theorem \ref{t-AH-1}).
Let $ \{ \Sigma_\rho \}_{\rho \ge \rho_0}  $ be  the hypersurfaces in $M^n$ which are  the
  geodesic spheres  of radius $\rho$  in  $\H^n$ as specified  in Definition \ref{d-AH}.
   Let $\{ V^{(i)} \}_{0\le i\le n-1} $ be the functions given in \eqref{e-def-Vi}.
  Then the following are true:
\begin{enumerate}
  \item [(I)]
   \begin{itemize}
   \item[(a)]   \bee
\begin{split}
& \ \mathbf{M}(g) (V^{ (0)}) \\
=  & \   \lim_{ \rho \rightarrow \infty}   \frac{c_n}{2}
 \lf( \frac{ | \Sigma_\rho |  }{ \omega_{n-1} } \ri)^\frac{1}{n-1}
  \int_{\Sigma_\rho}  \lf[    {S}^\rho  -  \frac{n-2}{n-1} H^2  + (n-1)(n-2)\ri]  d\sigma_\rho     .
\end{split}
\eee
    \item[(b)]
    For $1\le i\le n$,
    \bee
\begin{split}
& \ \mathbf{M}(g) (V^{ (i)}) \\
=  & \   \lim_{ \rho \rightarrow \infty}  \frac{c_n}{2}    \lf( \frac{ | \Sigma_\rho |  }{ \omega_{n-1} } \ri)^\frac{1}{n-1}
    \int_{\Sigma_\rho} \frac{x^i}{|x|} \lf[  {S}^\rho  - \frac{n-2}{n-1} H^2 + (n-1)(n-2) \ri]  d\sigma_\rho     ,
\end{split}
    \eee
where $\{ x^i \} $ are the coordinate functions in the ball model  $(B^n, g_0)$ of $\H^n$.
  \end{itemize}

  \vh

  \item [(II)]
Suppose    $\Sigma_\rho $ can be isometrically embedded in $\H^n$ for sufficiently  large $\rho$,
 which is automatically satisfied   when $n=3$.
 \begin{enumerate}
\item[(a)]
For each $0 \le i \le n$,
$$ \mathbf{M}(g) ( V^{(i)} ) =  2 b_n \lim_{\rho \rightarrow \infty} \int_{\Sigma_\rho} (H_0 - H) V^{(i)} \, d \sigma_\rho .  $$

\item[(b)] Consider the hyperboloid model of $ \H^n$, i.e.
$$
\{(x^0,x^1,\dots,x^n)\in \R^{n,1}|\ (x^0)^2-\sum_{i=1}^n(x^i)^2=1, x^0>0\} ,
$$
where $ \R^{n,1}$ is the $(n+1)$-dimensional Minkowski space,
 there exist isometric embeddings  of $ \{  \Sigma_\rho  \} $ in $\H^n$ such  that
$$
\mathbf{M}(g)(V)=
2 b_n \lim_{\rho \to \infty}\int_{\Sigma_\rho }(H_0-H)\mathbf{x} \ d\sigma_\rho ,
$$
where
$ \mathbf{M}(g) (V)=\lf(\mathbf{M}(g) (V^{(0)}), \dots,\mathbf{M}(g) (V^{(n)})\ri)$
and $\mathbf{x} $ is   the position vector  of  the embedding  of $\S_\rho$  in $ \H^{n} \subset \R^{n,1}$.
\end{enumerate}
\end{enumerate}
Here  $ | \Sigma_\rho |$, $ S^\rho$, $ d \sigma_\rho$
 are  the volume, the scalar curvature, the volume element   of $\Sigma_\rho$, respectively;
 $H$, $ \{ \kappa_\alpha\}$  are the mean curvature, the principal curvatures  of $\Sigma_\rho$ in $ (M^n ,g )$,  respectively;  $H_0$,  $ \{ \ko_\alpha\}$  are the mean curvature,  the principal curvatures
  of the isometric embedding of $\Sigma_\rho$ in $ \H^n$, respectively;
and $ \mathbf{M}(g) (\cdot)$ is the mass function of $(M^n, g)$.

\end{thm}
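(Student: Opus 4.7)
The plan is to derive both parts directly from the flux formula in Theorem \ref{t-mass}(ii), using the Gauss--Codazzi equations to rewrite $G^g_{-1}(X^{(i)},\nu_g)$ in terms of the induced geometry of $\Sigma_\rho$, and then reducing the asymptotic integral to the claimed expressions by comparing with the background metric $g_0$ on $\H^n$.

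For part (I), on each $\Sigma_\rho$ decompose $X^{(i)} = \phi^{(i)}\nu_g + X^{(i),T}$ where $\phi^{(i)} := g(X^{(i)},\nu_g)$. The Gauss equation in $(M,g)$, combined with the orthogonal decomposition $|A|^2 = |\ringA|^2 + \tfrac{1}{n-1}H^2$, yields
\[
 G^g_{-1}(\nu_g,\nu_g) = -\tfrac{1}{2}\Bigl[S^\rho - \tfrac{n-2}{n-1}H^2 + |\ringA|^2 + (n-1)(n-2)\Bigr],
\]
so $-c_n \phi^{(i)} G^g_{-1}(\nu_g,\nu_g)$ already has the structure appearing in (I)(a) and (I)(b). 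The tangential contribution $\Ric_g(X^{(i),T},\nu_g)$ is rewritten using the Codazzi identity $\Ric_g(e,\nu_g) = (\div_\Sigma A)(e) - e(H)$ and then moved off by integration by parts on $\Sigma_\rho$. A comparison of $\phi^{(i)}$ with its hyperbolic counterpart $g_0(X^{(i)},\nu_0)$ on the geodesic sphere produces
\[
 \phi^{(i)} = \bigl(|\Sigma_\rho|/\omega_{n-1}\bigr)^{1/(n-1)} W^{(i)} + (\text{lower order}),
\]
where $W^{(0)} \equiv 1$ and $W^{(i)} = x^i/|x|$ for $1 \le i \le n$; this furnishes the area factor and the weight that appear in the statement.

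For part (II)(a), apply the Gauss equation in $\H^n$ (using $\mathcal{S}_{g_0} = -n(n-1)$ and $\Ric_{g_0}(\nu_0,\nu_0) = -(n-1)$) to the isometric embedding of $\Sigma_\rho$, obtaining
\[
 S^\rho + (n-1)(n-2) = \tfrac{n-2}{n-1} H_0^2 - |\ringA_\H|^2,
\]
where $\ringA_\H$ denotes the traceless second fundamental form of the embedding. Inserting this into the integrand of (I) gives
\[
 S^\rho - \tfrac{n-2}{n-1} H^2 + (n-1)(n-2) = \tfrac{n-2}{n-1}(H_0 - H)(H_0 + H) - |\ringA_\H|^2,
\]
and since $H, H_0 \to n-1$ and the $|\ringA_\H|^2$ integral vanishes in the limit, the numerical identity $\tfrac{c_n}{2} \cdot \tfrac{n-2}{n-1} \cdot 2(n-1) = 2b_n$ converts (I) into the Brown--York type formula of (II)(a). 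Part (II)(b) then follows by observing that in the hyperboloid model of $\H^n \subset \R^{n,1}$ the position vector of a point at hyperbolic distance $\rho$ in direction $\theta$ is precisely $\mathbf{x} = (V^{(0)},V^{(1)},\dots,V^{(n)})$, so (II)(b) is the vector-valued reformulation of (II)(a), provided the family of embeddings is chosen with a coherent center so that $\mathbf{x}$ converges in $\R^{n,1}$.

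The hard part will be the asymptotic book-keeping, which is tight in the decay rate: the estimate $\int_{\Sigma_\rho} |\ringA|^2 \, d\sigma_g \cdot (|\Sigma_\rho|/\omega_{n-1})^{1/(n-1)} = O(e^{(n-2\tau)\rho})$ tends to zero only under $\tau > n/2$, and the same threshold simultaneously controls the Codazzi-based tangential integral and the sharpness required in the approximation $\phi^{(i)} \to V^{(i)}$ against the divergent area factor. A secondary subtlety in (II)(b) is selecting a consistent family of isometric embeddings into $\H^n$ so that their position vectors converge in $\R^{n,1}$ to a well-defined linear functional representing $\mathbf{M}(g)$.
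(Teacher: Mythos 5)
Your overall strategy---evaluating the mass via Theorem \ref{t-mass}(ii) and converting $G^g_{-1}(X^{(i)},\nu)$ into surface data through the Gauss equation---is the same as the paper's, and your formula for $G^g_{-1}(\nu,\nu)$, the algebra turning (I) into (II)(a), and the constant identity $\tfrac{c_n}{2}\cdot\tfrac{n-2}{n-1}\cdot 2(n-1)=2b_n$ are all correct. The first genuine gap is the tangential part of $X^{(i)}$ for $1\le i\le n$. Unlike $X^{(0)}=\sinh\rho\,\p_\rho$, whose tangential component is already $O(e^{(1-\tau)\rho})$, the field $X^{(i)}=\p/\p x^i$ has a tangential component of full size $O(e^{\rho})$. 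Your Codazzi-plus-integration-by-parts plan leaves, after integrating by parts, the pairing of $H\sigma-A$ against $\tfrac12\mathcal{L}_{X^{(i),T}}\sigma$; the discrepancy between the latter and its exact hyperbolic model is $O(e^{(1-\tau)\rho})$ pointwise, and against the trace part $\tfrac{n-2}{n-1}H\sigma$ and the area $O(e^{(n-1)\rho})$ this contributes $O(e^{(n-\tau)\rho})$. So the threshold is not $\tau>n/2$ as you claim; without exhibiting further cancellation the estimate does not close even at $\tau>n-1$. The paper instead writes $X^{(i)}=\tfrac{x^i}{r^2}X^{(0)}+Z$ and kills $\int_{\Sigma_\rho}G^g_{-1}(Z,\nu)\,d\sigma_\rho$ by Herzlich's device (Lemma \ref{l-AH-6}): interpolate $g$ with $g_0$ on an annulus, integrate by parts in the bulk, and use that $G^{\tilde g}_{-1}=O(e^{-\tau\rho_0})$ while the deformation tensor of $Z$ stays bounded; this is precisely where the hypothesis $\tau>n-1$ of Theorem \ref{t-AH-1} enters, and some such argument must replace your sketch.

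The second gap is in (II): you assert that the integral of the squared traceless second fundamental form of the embedding into $\H^n$ is negligible, which is equivalent to $\ko_\alpha=\coth\rho+O(e^{-\tau\rho})$. This does not follow from the mere existence of the isometric embedding, since a priori only the induced metric is controlled; it is the content of Lemma \ref{l-AH-4}, proved by an algebraic pinching argument from the Gauss equation when $n\ge 4$ and, when $n=3$, by a priori $C^2$ bounds for isometric embeddings that force the extra derivative hypotheses \eqref{e-h-derivative-condition} appearing in the theorem. Finally, for (II)(b) the ``coherent choice of center'' you defer is the real work: one must normalize $\iota_\rho$ by an isometry of $\H^n$ so that its position vector matches $(V^{(0)},\dots,V^{(n)})$ up to $O(e^{(3-\tau)\rho})$ (Lemma \ref{l-AH-7}, via Blaschke's rolling theorem and a rotation-fixing argument), and it is this error, multiplied by $H_0-H$ and the area, that produces the condition $2\tau-3>n-1$ and hence the separate requirement $\tau>\tfrac52$ when $n=3$.
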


We  give some remarks concerning Theorems \ref{t-intro-1}  and \ref{t-intro-2}.

\begin{remark}
When $n=3$,  \eqref{eq-thm-S-H-intro} and  \eqref{eq-thm-BY-intro}
in  Theorem \ref{t-intro-1} become
\be \label{eq-3d-H-limit}
 \m
=   {\m}_{_H} (\Sigma_\rho) +  o (1)
\ee
and
\be \label{eq-3d-BY-limit}
 \m
=   {\m}_{_{BY}} (\Sigma_\rho) +  o (1)
\ee
as $ \rho \rightarrow \infty$, respectively,
where
$$ {\m}_{_H} (\Sigma_\rho) =  \sqrt{  \frac{ | \Sigma_\rho| }{16 \pi} }   \lf[  1 - \frac{1}{16 \pi} \int_{\Sigma_\rho}  H^2 d\sigma_\rho \ri]  $$
 is the Hawking quasi-local mass  \cite{Hawking-mass} of $ \Sigma_\rho $ in $(M^3, g)$,
and
$$
\m_{_{BY} }  (\Sigma_\rho )  = \frac{1}{8\pi} \int_{\S_\rho }(H_0-H) d\sigma_\rho
$$
is the Brown-York quasi-local mass (\cite{BY1, BY2}) of $\Sigma_\rho $ in $(M^3, g)$.
In this case,  \eqref{eq-3d-BY-limit} was first   proved for coordinate spheres $\{ S_r \}$ in \cite{FanShiTam}.
Later in  \cite{ShiWangWu09},    \eqref{eq-3d-H-limit} and \eqref{eq-3d-BY-limit} were  proved  for
nearly round 2-surfaces.
(Examples of surfaces which are not nearly round,  but along which $\m_{_{BY}} (\cdot) $  converges to $\m$,  were given  in \cite{FanKwong}.)
Both proofs  in  \cite{FanShiTam, ShiWangWu09}
are   via  delicate pointwise estimates  of $H$,
together with an  application of  the Minkowski integral formula   (cf. \cite{KLBG})
to handle the integral of $H_0$.
For  $ n \ge 3$, our proof of  \eqref{eq-thm-S-H-intro} and  \eqref{eq-thm-BY-intro}
in  Theorem \ref{t-intro-1} is to use Theorem \ref{t-mass}(i) with $\{ S_r \}$
replaced by $ \{ \Sigma_\rho \}$.
 Indeed, it was proved  in  \cite[Theorem 2.1]{MiaoTam2015}
 that  Theorem \ref{t-mass}(i) is still valid if  $ \{ S_r \}$ is replaced by
 a  family of Lipschitz     hypersurfaces  $\{ \Sigma_l\}$,
which are boundaries of  domains in $\R^n$
so that $r_l:=\inf_{x\in \Sigma_l}|x|$ tends to infinity with the volume  $ | \Sigma_l | $ satisfying
 $|\Sigma_l|\le Cr_l^{n-1}$ for some   constant $C$ independent of $l$.
In particular,   the mass formula in Theorem \ref{t-mass}  (i) is   applicable   to
nearly round hypersurfaces defined in Definition \ref{def-nearly-round}.
\end{remark}

\begin{remark}
When $ n \ge 4$, though  \eqref{eq-thm-BY-intro} and \eqref{eq-thm-sigma-2-intro} in Theorem \ref{t-intro-1}
 are  proved under the assumption that
$ \Sigma_\rho $ can be isometrically embedded in  $ \R^n$ for large $\rho$,
it is directly applicable in certain  special  cases. For instance, in \cite{ShiTam02}
an asymptotically flat metric
$ g = u^2 d r^2 + g_r $
with zero scalar curvature
was constructed on the exterior  $E$ of a convex hypersurface $ \Sigma_0$ in $ \R^n$,
where $ r $ is the Euclidean distance to $ \Sigma_0$ and $ g_r $ represents the induced metric on
$\Sigma_r$ that is the level set of $r$. In this case, one easily checks that $ \{ \Sigma_r \}$
is a family of nearly round hypersurfaces in $(E, g)$,   which are automatically isometrically embedded in $ \R^n$.
 Hence,   \eqref{eq-thm-BY-intro} in Theorem \ref{t-intro-1}  implies
Theorem 2.1(c) in  \cite{ShiTam02}.
\end{remark}

\begin{remark}
Since
$  H_0 =  \sum_{\alpha}  \ko_\alpha ,  \  H = \sum_\alpha  \kappa_\alpha ,$
and
$$
\sum_{\alpha < \beta} \ \ko_\alpha \ko_\beta  , \
\sum_{\alpha < \beta}  \kappa_\alpha \kappa_\beta
$$
are often known as the seconder order mean curvature of $ \Sigma_\rho$ in $\R^n$, $(M^n, g)$ respectively,
\eqref{eq-thm-sigma-2-intro} in Theorem \ref{t-intro-1}
may be viewed as an   analogue of
\eqref{eq-thm-BY-intro} in terms of the seconder order mean curvature.
\end{remark}

\begin{remark}
When $n=3$,
part (I) (a) of Theorem \ref{t-intro-2}
becomes
\bee
\begin{split}
 \mathbf{M}(g) (V^{ (0)})
=  \tilde{\m}_{_H} (\Sigma_\rho) +  o (1) , \  \mathrm{as} \  \rho \rightarrow \infty,
\end{split}
\eee
where
$$ \tilde{\m}_{_H} (\Sigma_\rho) =  \sqrt{  \frac{ | \Sigma_\rho| }{16 \pi} }   \lf[  1 - \frac{1}{16 \pi} \int_{\Sigma_\rho}  H^2 d\sigma_\rho + \frac{| \Sigma_\rho |}{4 \pi} \ri]  $$
 is known as a hyperbolic analogue of the Hawking mass (cf. \cite{Wang, Neves}).
 However,   part (I)  of Theorem \ref{t-intro-2}  shows that  the limit of $  \tilde{\m}_{_H} (\Sigma_\rho) $ is  only part of the mass function $\mathbf{M}(g) (V) $. Therefore, one may expect a complete definition  of
 the hyperbolic Hawking mass of $\Sigma_\rho$  to involve  all  integrals under the limit sign in part (I) (a) and (b)
 of Theorem \ref{t-intro-2}.
\end{remark}

\begin{remark}
When $n=3$,
the integral
$$
\frac{1}{8 \pi} \int_{\S_\rho}(H_0-H)\mathbf{x} \ d\sigma_\rho
$$
in  part (II) (b) of Theorem \ref{t-intro-2}
is analogous to the quasi-local mass integral for closed surfaces in $3$-manifolds with $S_g \ge -6 $
considered  in \cite{WangYau2007,ShiTam2007}.
In the context of conformally compact asymptotically hyperbolic $3$-manifolds,
part (II) (b) of Theorem \ref{t-intro-2}  was proved   in \cite{KwongTam}. For any $n \ge 3$,
we  prove   Theorem \ref{t-intro-2} by applying   Theorem \ref{t-mass}(ii).
\end{remark}

The paper is organized as follows. In section 2, we prove Theorem \ref{t-intro-1}. In section 3, we prove Theorem \ref{t-intro-2}.

\section{Limits of quasi-local mass integrals in AF manifolds}

In an asymptotically flat $(M^n, g)$, given a closed hypersurface $ \Sigma$ that  is homologous to a large coordinate sphere at infinity,
we  let $ H$ and $ A$  be the mean curvature and the second fundamental form of $ \Sigma$ with respect to the infinity pointing unit normal $ \nu$ respectively,
and let  $ \ringA$ be the traceless second fundamental form of $ \Sigma$, i.e. $ \ringA  = A - \frac{H}{n-1} \sigma$, where $ \sigma$ is the induced metric on $ \Sigma$.
We also let $ d \sigma$, $ \nabla $,  $ | \Sigma | $ and $ \text{Diam}(\Sigma)$
 denote the volume form on $ \Sigma $,  the covariant differentiation on  $ \Sigma$,
 the volume and  the intrinsic diameter of $ \Sigma$, respectively.

The concept of a $1$-parameter family of {\em nearly round surfaces} at infinity of an asymptotically flat $3$-manifold was used in
\cite{ShiWangWu09}. Below we give its analogue in general dimensions.

\begin{definition} \label{def-nearly-round}
Let $ \{ \Sigma_\rho \}$ be a  family of closed, connected, embedded hypersurfaces,  homeomorphic to the $(n-1)$-dimensional sphere,
  in $(M^n, g)$ with $\rho \in (\rho_0, \infty)$ for some $ \rho_0 > 0$.
$\{ \Sigma_\rho \}$ is called {\em nearly round} if
there exists a constant $ C>0$ such that, for all $ \rho$,  the following are satisfied:

\begin{enumerate}
\item[(i)] $ C^{-1} \rho \le | x | \le C \rho $ for all $ x \in \Sigma_\rho$;

\item[(ii)] $  | \ringA | + \rho | \nabla \ringA | \le C \rho^{-1- \tau} $;

\item[(iii)]  $ | \S_\rho | \le C\rho^{n-1}$;
  \item[(iv)] $\text{Diam}(\S_\rho)\le C\rho$.
\end{enumerate}

\end{definition}

Next, we  give a series of lemmas for a family of nearly round hypersurfaces $\{\Sigma_\rho \}$ in an asymptotically flat $ (M^n , g)$.

\begin{lma} \label{lma-grad-H-A-decay}
There is  a constant $C>0$ independent on $\rho$ such that
$$ | \nabla A | \le C \rho^{-2 - \tau} , \ | \nabla H | \le C \rho^{-2 - \tau} . $$
\end{lma}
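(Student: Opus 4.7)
The plan is to reduce the estimate on $|\nabla A|$ and $|\nabla H|$ to the already-assumed estimate $|\nabla \ringA| \le C \rho^{-1-\tau}/\rho = C\rho^{-2-\tau}$ from condition (ii) of Definition \ref{def-nearly-round}, using the contracted Codazzi identity to control $\nabla H$ separately.

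First, since $A = \ringA + \frac{H}{n-1}\sigma$ and the induced metric $\sigma$ is parallel on $\Sigma_\rho$, one has
\[
\nabla A = \nabla \ringA + \frac{1}{n-1}\, \nabla H \otimes \sigma .
\]
Hence it suffices to prove $|\nabla H| \le C \rho^{-2-\tau}$; the bound on $|\nabla A|$ then follows by the triangle inequality.

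To bound $\nabla H$, I would invoke the contracted Codazzi equation for $\Sigma_\rho \subset (M^n, g)$, which reads
\[
\Div_\Sigma A - \nabla H = - \Ric^g(\nu, \cdot)^{T} ,
\]
where the superscript $T$ denotes the tangential projection along $\Sigma_\rho$. Writing $\Div_\Sigma A = \Div_\Sigma \ringA + \tfrac{1}{n-1}\nabla H$, this rearranges to
\[
\frac{n-2}{n-1}\, \nabla H \; = \; \Div_\Sigma \ringA \; + \; \Ric^g(\nu, \cdot)^{T} ,
\]
so that $|\nabla H| \le C\bigl(|\nabla \ringA| + |\Ric^g|\bigr)$ on $\Sigma_\rho$.

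For the ambient Ricci term, the AF decay \eqref{eq-AF} gives Christoffel symbols of order $O(|x|^{-1-\tau})$ and Ricci curvature of order $O(|x|^{-2-\tau})$ (the $\partial\Gamma$ terms dominate, while $\Gamma\ast\Gamma$ decays faster). Combining this with condition (i) in Definition \ref{def-nearly-round}, which gives $|x| \sim \rho$ on $\Sigma_\rho$, yields $|\Ric^g|_{\Sigma_\rho} = O(\rho^{-2-\tau})$. Together with the assumed bound $|\nabla \ringA| \le C\rho^{-2-\tau}$, this gives $|\nabla H| \le C\rho^{-2-\tau}$, and hence $|\nabla A| \le C\rho^{-2-\tau}$, completing the argument. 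No step is a serious obstacle; the only thing to be careful about is the correct sign convention in the contracted Codazzi identity and confirming that $|\Ric^g|$ is measured in the metric $g$ (which is uniformly equivalent to $g_e$ on $\Sigma_\rho$ by \eqref{eq-AF}), so that the pointwise decay rate is unchanged.
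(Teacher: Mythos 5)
Your proof is correct, but it takes a genuinely different (and more direct) route than the paper. You trace the Codazzi equation once to get an exact identity
\[
\tfrac{n-2}{n-1}\,\nabla H \;=\; \Div_\Sigma \ringA \;+\; \Ric(\nu,\cdot)^{T},
\]
so that $|\nabla H|$ is controlled immediately by $|\nabla \ringA|=O(\rho^{-2-\tau})$ from Definition \ref{def-nearly-round}(ii) and by the ambient Ricci decay $O(\rho^{-2-\tau})$ coming from \eqref{eq-AF} and condition (i); the bound on $|\nabla A|$ then falls out of $\nabla A=\nabla\ringA+\frac{1}{n-1}\nabla H\otimes\sigma$. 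The paper instead invokes Huisken's gradient inequality \cite[Lemma 2.2]{Huisken1986}, which bounds $|\nabla A|^2$ from below by $\bigl(\frac{3}{n+1}-a\bigr)|\nabla H|^2$ minus a Ricci term, pairs it with the elementary inequality $\frac{1}{n-1}|\nabla H|^2=|\nabla(A-\ringA)|^2\ge(1-\e)|\nabla A|^2-C(\e)|\nabla\ringA|^2$, and chooses $a,\e$ so the two can be played off against each other to first bound $|\nabla A|$ and then $|\nabla H|$. The two arguments rest on the same underlying fact (the Codazzi equation plus the decomposition of $A$ into $\ringA$ and its trace part), but yours replaces the inequality-juggling with a single identity, which is arguably more transparent and self-contained; the paper's route has the minor advantage of quoting a ready-made lemma. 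Your closing caveats (sign convention in Codazzi, and that $g$ and $g_e$ are uniformly equivalent so the decay rate of $|\Ric|$ is norm-independent) are exactly the right things to check, and both are harmless here.
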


\begin{proof}
By \cite[Lemma 2.2]{Huisken1986}, given any constant $a>0$,
\be \label{eq-A-H-Ricci}
\begin{split}
|\nabla A|^2 \ge & \ \lf(\frac{3}{n+1}-a\ri)|\nabla H|^2 \\
& \ -\frac{2}{n+1}\lf(\frac{2}{n+1}a^{-1}-\frac{n-1}{n-2}\ri)|w|^2 ,
\end{split}
\ee
where $w$ is the projection of $\Ric(\nu,\cdot)$ on $\S_\rho$ and
 $ \Ric (\cdot, \cdot)$ is  the Ricci curvature of $(M^n, g)$.
On the other hand, given any constant $ \epsilon  > 0 $,
\be \label{eq-H-A-TA}
\frac{1}{n-1}|\nabla H|^2
%= |\nabla \frac H{n-1} \sigma |^2
= |\nabla (A-\ringA)|^2\ge (1-\e)|\nabla A|^2-C(\e)|\nabla\ringA|^2
\ee
for some constant $ C(\epsilon) > 0 $ depending only on $ \epsilon $.
Since $ n \ge 3$, we may choose $ a $ and $ \epsilon $ such that
$
\lf(\frac{3}{n+1}-a\ri)(n-1)(1-\epsilon )>1
$.
Thus, it follows from \eqref{eq-A-H-Ricci} and  \eqref{eq-H-A-TA} that
\be \label{eq-est-A}
|\nabla A|\le C\rho^{-2-\tau},
\ee
where we also used \eqref{eq-AF} and (i), (ii) in Definition \ref{def-nearly-round}.
The fact  $ | \nabla H | \le C \rho^{-2 - \tau} $ now follows from \eqref{eq-A-H-Ricci} and \eqref{eq-est-A}.
\end{proof}

In what follows, we  let $ \hat g $ be the background
 Euclidean metric on the open set of $M^n$ that is diffeomorphic to $ \R^n \setminus \{ | x | \le r_0 \}$.
A symbol with  ``$\ \hat \ \ $"    means  the  corresponding quantity is  computed with respect to $ \hat g$.

\begin{lma}  \label{lma-est-A-hat-A}
As $ \rho \rightarrow \infty$,
$$   d \sigma = ( 1 + O (\rho^{-\tau}) ) d \hat \sigma , \ \nu - \hat \nu = O ( \rho^{-\tau}), $$
$$  | A - \hat A | = O ( \rho^{-\tau} ) | \hat A | + O ( \rho^{-1 -\tau} ) .$$
\end{lma}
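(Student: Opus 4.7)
The plan is to use the pointwise decay of $h := g - \hat g$ on $\Sigma_\rho$, which follows from condition (i) of Definition \ref{def-nearly-round} (giving $|x| \sim \rho$ there) together with the asymptotic flatness hypothesis \eqref{eq-AF}: on $\Sigma_\rho$ one has $|h| = O(\rho^{-\tau})$, $|\p h| = O(\rho^{-1-\tau})$, $|\p^2 h| = O(\rho^{-2-\tau})$, and hence the Christoffel symbols of $g$ and $\hat g$ differ by $O(\rho^{-1-\tau})$.

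For the volume form, I observe that $\sigma = \hat\sigma + h|_{T\Sigma_\rho}$, so in a local $\hat g$-orthonormal frame on $T\Sigma_\rho$ the matrix representing $\sigma$ differs from the identity by an $O(\rho^{-\tau})$ symmetric matrix; expanding its determinant yields $d\sigma = (1 + O(\rho^{-\tau}))\,d\hat\sigma$. For the normal, I would decompose $\nu = a\hat\nu + W$ with $W \in T\Sigma_\rho$. The $g$-orthogonality $g(\nu,X)=0$ for tangent $X$ reduces, after writing $g = \hat g + h$, to $\hat g(W,X) = -a\, h(\hat\nu,X) - h(W,X)$, which forces $|W|_{\hat g} = O(\rho^{-\tau})$; the normalization $g(\nu,\nu)=1$ combined with $g(\hat\nu,\hat\nu) = 1 + O(\rho^{-\tau})$ then yields $a = 1 + O(\rho^{-\tau})$, so $\nu - \hat\nu = O(\rho^{-\tau})$.

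For the second fundamental form, I would represent $\Sigma_\rho$ locally as a level set of a smooth defining function $F$ and use the identities $A(X,Y) = -|\nabla F|_g^{-1}\,\nabla^2 F(X,Y)$ and $\hat A(X,Y) = -|\hat\nabla F|_{\hat g}^{-1}\,\hat\nabla^2 F(X,Y)$ on tangent $X,Y$. The difference of Hessians satisfies
\begin{equation*}
\nabla^2 F(X,Y) - \hat\nabla^2 F(X,Y) = -(\Gamma^k_{ij} - \hat\Gamma^k_{ij})\, X^i Y^j\, \p_k F = O(\rho^{-1-\tau})\,|\hat\nabla F|_{\hat g},
\end{equation*}
while $|\nabla F|_g^{-1} = |\hat\nabla F|_{\hat g}^{-1}(1 + O(\rho^{-\tau}))$ and $\hat\nabla^2 F(X,Y)/|\hat\nabla F|_{\hat g} = -\hat A(X,Y)$. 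Splitting $A - \hat A$ into the Hessian-difference piece (divided by $|\nabla F|_g$) and the norm-difference piece (multiplying $\hat\nabla^2 F$) then produces exactly the announced bound $O(\rho^{-\tau})|\hat A| + O(\rho^{-1-\tau})$.

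The main obstacle is the bookkeeping in this last step: one must isolate the contribution acquiring the factor $|\hat A|$ (from the metric deviation acting on $\hat\nabla^2 F$) from the genuinely new $O(\rho^{-1-\tau})$ piece (from the Christoffel difference), without letting either absorb the other and without allowing $|A|$, rather than $|\hat A|$, to appear on the right-hand side. The remaining estimates are routine perturbation computations once the defining-function setup is in place.
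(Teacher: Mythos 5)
Your argument is correct and follows essentially the same route as the paper: the volume-form and normal estimates are obtained exactly as in the paper's proof (a $\hat g$-orthonormal-frame comparison of $g$ and $\hat g$ together with the orthogonality and normalization conditions on $\nu$), and your second-fundamental-form estimate rests on the same $O(\rho^{-1-\tau})$ Christoffel-symbol difference, merely packaged through a level-set defining function rather than the paper's $\hat g$-Gaussian tubular coordinates. The bookkeeping you flag at the end is handled correctly by your splitting, since the Christoffel term contributes $O(\rho^{-1-\tau})$ after division by $|\nabla F|_g$, while the factor $|\nabla F|_g^{-1}-|\hat\nabla F|_{\hat g}^{-1}=O(\rho^{-\tau})|\hat\nabla F|_{\hat g}^{-1}$ acting on $\hat\nabla^2 F$ contributes exactly $O(\rho^{-\tau})|\hat A|$.
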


\begin{proof}
At any  $ p \in \Sigma_\rho$, let $ \{ e_1, \ldots, e_{n-1} \}$ be a $\hat g$-orthonormal frame in $ T_p \Sigma_\rho$.
Let $ e_n = \hat \nu $, then $ \{ e_i \ | \ i =1, \ldots, n \}$ form  a $\hat g$-orthonormal frame in $ T_p M^n$.
Hence, if   $ e_i = a_i^j \p_{x_j} $, then $ (a_i^j)_{n \times n}  $ is an orthogonal matrix.
Thus,
\be \label{eq-g-e-ij}
g (e_i, e_j) =    a_i^k a_j^l g( \p_{x_k}, \p_{x_l} )
=  \delta_{ij} + O (\rho^{-\tau} ),
\ee
which implies $ d \sigma = ( 1 + O ( \rho^{-\tau} ) ) d \hat \sigma$.
Now assume $ \nu = X^i e_i$. By \eqref{eq-g-e-ij} and the  fact $ g (\nu, \nu) = 1 $,
\be
\begin{split}
1 = & \ X^i X^j \lf[  \delta_{ij} + O ( \rho^{-\tau} ) \ri],
\end{split}
\ee
which implies  $ X^i = O (1) $ and hence
\be \label{eq-Xi-square}
\sum_i (X^i)^2 = 1 + O ( \rho^{- \tau} ).
\ee
Similarly,  for $ 1 \le \alpha \le  n-1$, the fact $ g(\nu, e_\alpha) = 0 $ and \eqref{eq-g-e-ij}
 imply
\be \label{eq-X-alpha}
0 =  X^\alpha  + O ( \rho^{- \tau} )  .
\ee
Therefore,  by \eqref{eq-Xi-square} and \eqref{eq-X-alpha},
\bee
\nu - \hat \nu = X^\alpha e_\alpha  + ( X^n - 1) e_n = O (\rho^{-\tau}).
\eee

To  compare $ A $ and $ \hat A$ at $p$,  we let $ W_\rho   $ be a small open set  in $\Sigma_\rho$ containing $p$.
  Let $ U_\rho $ be a $\hat g$-Gaussian tubular neighborhood of  $ W_\rho $ in $M^n$
and let  $\{  u_1, \ldots, u_{n-1} , t \}  $  be  local coordinates  on $ U_\rho $ such that, at $ t=0$,
$  \{ {u_\alpha} \ | \   \alpha =1, \ldots,   n-1 \} $  are local coordinates on  $ W_\rho $
%that are normal at $ p$  with respect to the induced metric from $\hat g$, i.e.
satisfying  $ \hat \nabla_{\p_{u_\alpha} } \p_{u_\beta} = 0 $ at $p$.
In $ U_\rho $,  suppose  $ \p_{u_\alpha} = X^i_\alpha  \p_{x_i} $.
Then
$$
\nabla_{\p_{u_\alpha} } \p_{ u_\beta}   =  X^i_\alpha  \lf( X^j_\beta \nabla_{\p_{x_i}} \p_{x_j}
+   \frac{\p X^j_\beta }{ \p {x_i} } \p_{x_j} \ri) ,
$$
$$
\hat \nabla_{\p_{u_\alpha} } \p_{ u_\beta}   =  X^i_\alpha  \lf( X^j_\beta \hat \nabla_{\p_{x_i}} \p_{x_j}
+   \frac{\p X^j_\beta }{ \p {x_i} } \p_{x_j} \ri)  ,
$$
which shows, at $p$,
\bee
\nabla_{\p_{u_\alpha} } \p_{ u_\beta} = \hat \nabla_{\p_{u_\alpha} } \p_{ u_\beta} + O ( \rho^{-1 - \tau} ) .
\eee
Therefore,
\be \label{eq-A-hatA}
\begin{split}
%A_{\alpha \beta} = & \
g(\nabla_{\p_{u_\alpha} } \p_{u_\beta} , \nu )
= & \  g(\hat \nabla_{\p_{u_\alpha } } \p_{u_\beta} , \nu )  + O ( \rho^{-1 - \tau} ) \\
= & \  g(\hat \nabla_{\p_{u_\alpha}  } \p_{u_\beta} , \hat \nu )  + O(\rho^{-\tau} ) | \hat A | + O ( \rho^{-1 - \tau} ) \\
= & \ \hat g(\hat \nabla_{\p_{u_\alpha}  } \p_{u_\beta} , \hat \nu ) + O(\rho^{-\tau} ) | \hat A | + O ( \rho^{-1 - \tau} ) .
\end{split}
\ee
From this we conclude
\be  \label{eq-norm-A-hatA}
 | A - \hat A | = O ( \rho^{-\tau} ) | \hat A | + O ( \rho^{-1 -\tau} ) .
\ee
\end{proof}

\begin{lma}  \label{lma-H-kappa}
For each large $\rho$, there exists $r (\rho)>0$ with  $ r (\rho) \sim \rho$, meaning
$ C^{-1} \rho < r(\rho) < C \rho $ for some constant $ C$ independent on $\rho$, such that
\bee
H = \frac{n-1}{r (\rho) } + O(  \rho^{-1- \tau} ) .
\eee
Consequently, $ |A| = O ( \rho^{-1} ) $, $ |\hat A| = O(\rho^{-1}) $ and
$$ \kappa_\alpha =\frac1{r(\rho)}+O(\rho^{-1-\tau}),  \ \hat\kappa_\alpha =\frac1{r(\rho)}+O(\rho^{-1-\tau}),
 $$
for all $ \alpha =1, \ldots, n-1$, where $ \kappa_\alpha$ and $ \hat \kappa_\alpha$ denote the principal
curvature of $\Sigma_\rho$ with respect to $g$ and $\hat g$ respectively.
\end{lma}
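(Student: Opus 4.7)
The plan is to first show that $H$ is almost constant on $\S_\rho$, then use a Minkowski-type integral identity to pin down its value up to lower order, and finally deduce the estimates on the principal curvatures. The first step is immediate: combining $|\nabla H| \le C \rho^{-2-\tau}$ from Lemma \ref{lma-grad-H-A-decay} with the diameter bound $\text{Diam}(\S_\rho) \le C \rho$ of Definition \ref{def-nearly-round}(iv), one gets $|H(p) - H(q)| \le C \rho^{-1-\tau}$ for all $p,q \in \S_\rho$, so $H = h_\rho + O(\rho^{-1-\tau})$ for some scalar $h_\rho$ depending only on $\rho$.

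For the second step, I would apply the Euclidean Minkowski identity
\[ (n-1) |\S_\rho|_{\hat g} = \int_{\S_\rho} \hat H \, \la x, \hat\nu \ra \, d \hat\sigma \]
to the topological sphere $\S_\rho$ in $(\R^n,\hat g)$. Tracing the comparison $|A - \hat A| = O(\rho^{-\tau})|\hat A| + O(\rho^{-1-\tau})$ of Lemma \ref{lma-est-A-hat-A} against the metric difference $|\sigma - \hat\sigma| = O(\rho^{-\tau})$, and coupling it with $|A|^2 = |\ringA|^2 + H^2/(n-1)$ in a short iteration that removes the need for an a priori bound on $|\hat A|$, one obtains $\hat H = H\bigl(1 + O(\rho^{-\tau})\bigr) + O(\rho^{-1-\tau})$, so $\hat H$ is likewise nearly constant. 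Letting $\Omega_\rho$ be the region enclosed by $\S_\rho$, Definition \ref{def-nearly-round}(i), (iii) together with the Euclidean isoperimetric inequality give $|\Omega_\rho| \sim \rho^n$ and $|\S_\rho|_{\hat g} \sim \rho^{n-1}$, while the divergence theorem gives $\int_{\S_\rho} \la x, \hat\nu\ra \, d\hat\sigma = n|\Omega_\rho|$. Substituting into the Minkowski identity then yields $h_\rho = (n-1)/r(\rho) + O(\rho^{-1-\tau})$ with $r(\rho) := n|\Omega_\rho|/|\S_\rho|_{\hat g} \sim \rho$.

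The remaining claims follow straightforwardly. Since $A = \ringA + \frac{H}{n-1}\sigma$ with $|\ringA| = O(\rho^{-1-\tau})$, each eigenvalue of $A$ differs from $H/(n-1)$ by at most $|\ringA|$, giving $\kappa_\alpha = 1/r(\rho) + O(\rho^{-1-\tau})$ and $|A| = O(\rho^{-1})$. Re-applying Lemma \ref{lma-est-A-hat-A} with this improved bound produces $|A - \hat A| = O(\rho^{-1-\tau})$, hence $|\hat A| = O(\rho^{-1})$ and $\hat\kappa_\alpha = \kappa_\alpha + O(\rho^{-1-\tau})$. The main obstacle is the apparent circularity in the second step: comparing $\hat H$ with $H$ via Lemma \ref{lma-est-A-hat-A} seems to need an a priori bound on $|\hat A|$, which itself would normally come from a bound on $H$. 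This is circumvented by the iteration coupling $|A|$ and $|\hat A|$ through Lemma \ref{lma-est-A-hat-A}, whose $O(\rho^{-\tau})$ coefficient makes it a contraction for large $\rho$.
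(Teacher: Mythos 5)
Your argument is correct, but the key step differs from the paper's. To pin down the constant value of $H$ (after both of you establish near-constancy from Lemma \ref{lma-grad-H-A-decay} plus the diameter bound), the paper uses a barrier argument: it takes the largest coordinate sphere $S_{r_1}$ inscribed in $\Sigma_\rho$ and the smallest coordinate sphere $S_{r_2}$ enclosing it, applies the maximum principle at the touching points to get $H \le \frac{n-1}{r_1} + O(\rho^{-1-\tau})$ and $H \ge \frac{n-1}{r_2}+O(\rho^{-1-\tau})$, computes the mean curvature of coordinate spheres in $g$ directly, and then simply sets $r(\rho) = (n-1)/H(x_*)$. You instead identify the constant through the Euclidean Minkowski identity and the divergence theorem, arriving at $r(\rho) = n|\Omega_\rho|/|\Sigma_\rho|_{\hat g}$; the two normalizations of $r(\rho)$ agree up to a factor $1+O(\rho^{-\tau})$, which is all that the later lemmas use. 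Your route buys an explicit integral-geometric meaning for $r(\rho)$ and avoids both the maximum principle and the asymptotic expansion of the mean curvature of coordinate spheres in the perturbed metric, at the cost of having to compare $\hat H$ with $H$ already at this stage — which forces the small bootstrap through Lemma \ref{lma-est-A-hat-A} that you correctly flag and resolve (the $O(\rho^{-\tau})|\hat A|$ term is absorbable for large $\rho$, and $|A| \le |H|/\sqrt{n-1} + O(\rho^{-1-\tau})$ closes the loop). The paper sidesteps that issue entirely by working only with $g$-quantities until the very last line. It is worth noting that your use of the Minkowski identity echoes the earlier proofs in the $n=3$ literature that the authors cite in Remark 1.1, whereas the paper's barrier argument is the more self-contained of the two; both are valid, and your deduction of the statements for $\kappa_\alpha$, $|A|$, $|\hat A|$, $\hat\kappa_\alpha$ at the end coincides with the paper's.
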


\begin{proof}
As $ \Sigma_\rho$ is homeomorphic to a sphere, for each $\rho$ large,
there is an $r_1 = r_1 (\rho) > 0$, such that $ S_{r_1} = \{ | x | = r_1 \}$ is the largest
coordinate sphere inside $ \Sigma_\rho$. Let $ x_1 $ be a point where $ S_{r_1}$ touches $ \Sigma_\rho$.
It follows from the maximum principle that
\be  \label{eq-H-comparison}
H (x_1) \le \bar{H} (x_1)
\ee
where $ \bar{H}$ is the mean curvature of $ S_{r_1}$ in $(M^n, g)$ with respect to the outward normal.
Direct calculation gives
\be \label{eq-H-sphere}
\bar{H} ( x_1 ) = \frac{n-1}{r_1} + O ( |r_1|^{ - 1 - \tau} )
\ee
(cf. Lemma 2.1 in \cite{FanShiTam} when $n=3$).
By Lemma \ref{lma-grad-H-A-decay} and (iv) in Definition \ref{def-nearly-round},   $H$ satisfies
\be \label{eq-H-two-points}
| H  - H(x_1) | \le C \rho^{-1- \tau} .
\ee
Hence, it follows from  \eqref{eq-H-comparison} -- \eqref{eq-H-two-points} that
\bee
H  \le  \frac{n-1}{r_1} + O ( \rho^{ - 1 - \tau} ) .
\eee
Similarly, if  $ r_2 > 0 $ is  the radius of the smallest coordinate sphere that lies outside of $\Sigma_\rho$, then
\bee
H  \ge   \frac{n-1}{r_2} + O ( \rho^{ - 1 - \tau} ) .
\eee
For any fixed point $x_* \in \Sigma_\rho$, these imply
\be \label{eq-rrho-sim}
  \frac{n-1}{r_2} + O ( \rho^{ - 1 - \tau} ) \le H (x_*) \le   \frac{n-1}{r_1} + O ( \rho^{ - 1 - \tau} ) .
\ee
By the same reasoning leading to \eqref{eq-H-two-points},
\bee
| H  - H(x_*) | \le C \rho^{-1- \tau} .
\eee
  Hence, choosing $r(\rho) = \frac{n-1}{H(x_*)}$,
we have
\bee
H = \frac{n-1}{r (\rho) } + O(  \rho^{-1- \tau} ),
\eee
where $ r (\rho) \sim \rho $ by \eqref{eq-rrho-sim}.  As a result, $A$ satisfies
\bee
A = \ringA +  \lf[ \frac{1}{r (\rho)}  + O(  \rho^{-1- \tau} ) \ri] \sigma,
\eee
which implies $ |A| = O (\rho^{-1} )$ and
\bee
\kappa_\alpha =  \frac{1}{r (\rho)}  + O(  \rho^{-1- \tau} ), \ \forall \ \alpha
\eee
by (ii) in Definition \ref{def-nearly-round}.
The claim on $ | \hat A |$ and $\hat \kappa_\alpha$ now follows from Lemma \ref{lma-est-A-hat-A}.
\end{proof}

In the rest of this section, $r (\rho)$ will always denote     the function of $\rho$  given in Lemma \ref{lma-H-kappa}.

\begin{lma} \label{lma-sectional}
Let $K_1(\rho)$ and $K_2(\rho)$ be the minimum and maximum of the sectional curvature of $\S_\rho$ respectively. Then
$$
K_1 (\rho) = \frac1{r^2(\rho)} + O(\rho^{-2-\tau}) , \ \    K_2(\rho) =  \frac1{r^2(\rho)}+O(\rho^{-2-\tau}).
$$
\end{lma}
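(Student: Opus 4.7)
The plan is to reduce to the Gauss equation and exploit the almost-umbilical structure of $\Sigma_\rho$ that follows from Definition \ref{def-nearly-round}(ii) together with Lemma \ref{lma-H-kappa}. For an orthonormal pair $X,Y \in T_p\Sigma_\rho$, Gauss gives
\[
K^{\Sigma_\rho}(X,Y) \;=\; K^{M}(X,Y) \;+\; A(X,X)\,A(Y,Y) \;-\; A(X,Y)^2 ,
\]
so the two tasks are to control the ambient sectional curvature term and the extrinsic term uniformly in the choice of $2$-plane.

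For the ambient term, the decay assumption \eqref{eq-AF} on $\partial^2 g_{ij}$ and the fact that the Riemann tensor of $g$ in the asymptotic coordinates is a rational expression in $g_{ij}$, $\partial g_{ij}$, $\partial^2 g_{ij}$ gives $|\Rm(g)| = O(|x|^{-2-\tau})$. Combined with (i) of Definition \ref{def-nearly-round}, this yields $|K^M(X,Y)| = O(\rho^{-2-\tau})$ at every point of $\Sigma_\rho$ and for every $2$-plane.

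For the extrinsic term, I would write $A = \ringA + \frac{H}{n-1}\sigma$ and use Lemma \ref{lma-H-kappa} to replace $H/(n-1)$ by $1/r(\rho) + O(\rho^{-1-\tau})$, so that
\[
A \;=\; \tfrac{1}{r(\rho)}\,\sigma \;+\; E, \qquad |E|\;\le\;|\ringA| + O(\rho^{-1-\tau}) \;=\; O(\rho^{-1-\tau}),
\]
where the last bound uses Definition \ref{def-nearly-round}(ii). Expanding $A(X,X)A(Y,Y) - A(X,Y)^2$ for any orthonormal pair $X,Y$ gives
\[
\tfrac{1}{r(\rho)^2}\bigl(|X|^2|Y|^2 - \langle X,Y\rangle^2\bigr) + \tfrac{2}{r(\rho)}\,\mathrm{(linear\ in\ }E\mathrm{)} + \mathrm{(quadratic\ in\ }E\mathrm{)},
\]
which equals $r(\rho)^{-2} + O(\rho^{-2-\tau})$ uniformly in $X,Y$, since $1/r(\rho) = O(\rho^{-1})$ and $|E|^2 = O(\rho^{-2-2\tau})$.

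Adding the two contributions yields $K^{\Sigma_\rho}(X,Y) = r(\rho)^{-2} + O(\rho^{-2-\tau})$ for every $2$-plane in $T\Sigma_\rho$, and taking minimum and maximum gives the stated formulas for $K_1(\rho)$ and $K_2(\rho)$. The only genuine point of care is the uniform $O(\rho^{-2-\tau})$ bound on the extrinsic deviation, which is what forces one to combine the nearly-round trace-free bound in (ii) with the sharp $1/r(\rho)$ asymptotic of $H$ from Lemma \ref{lma-H-kappa}; without the mean-curvature normalization one would only recover an $O(\rho^{-2})$ error, which is insufficient.
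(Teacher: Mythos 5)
Your proof is correct and follows exactly the route the paper takes: the paper's own proof is a one-line appeal to the Gauss equation, Lemma \ref{lma-H-kappa}, and the $O(\rho^{-2-\tau})$ decay of the ambient sectional curvature, which is precisely what you have spelled out in detail (including the correct observation that the cross term $\tfrac{2}{r}E$ is what produces the $O(\rho^{-2-\tau})$ error rather than a worse $O(\rho^{-2})$).
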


\begin{proof}
This follows from the Gauss equation, Lemma \ref{lma-H-kappa} and the fact that
 the sectional curvature of $(M^n, g)$ at $ x \in \Sigma_\rho $  decays like $O(\rho^{-2-\tau})$.
 \end{proof}

\begin{lma} \label{lma-sphere-center}
Let $ X = (x^1, \ldots, x^{n})$ be  the  position vector given by  the coordinates $\{ x^i \}$ near infinity.
For each large $\rho$,
there exists  a vector  $\mathbf{a}(\rho) \in \R^n$ such that  $|\mathbf{a}(\rho)| = O ( \rho) $ and, on $ \Sigma_\rho$,
$$|X-\mathbf{a}(\rho) -r(\rho)\hat \nu|=O(\rho^{1-\tau}) . $$
\end{lma}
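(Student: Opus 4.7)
The plan is to show that the vector-valued function $Y := X - r(\rho)\hat\nu$, defined on $\Sigma_\rho$, is nearly constant with oscillation $O(\rho^{1-\tau})$. The idea is modeled on the fact that if $\Sigma_\rho$ were an exact round Euclidean sphere of radius $r$ centered at some $\mathbf{a}$, then $Y$ would equal $\mathbf{a}$ identically. Once $Y$ is shown to be almost constant, setting $\mathbf{a}(\rho) := Y(p_0)$ for any fixed $p_0 \in \Sigma_\rho$ yields the claim, and the bound $|\mathbf{a}(\rho)| = O(\rho)$ follows immediately from $|X| = O(\rho)$ on $\Sigma_\rho$ (Definition \ref{def-nearly-round}(i)) together with $r(\rho) \sim \rho$.

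The main computation is to control the tangential derivative of $Y$ in the Euclidean connection $\hat\nabla$. For $T$ tangent to $\Sigma_\rho$, we have $\hat\nabla_T X = T$ and $\hat\nabla_T \hat\nu = \hat W(T)$, where $\hat W$ is the Euclidean shape operator with eigenvalues $\hat\kappa_\alpha$. Thus
\begin{equation*}
\hat\nabla_T Y = T - r(\rho)\hat W(T).
\end{equation*}
By Lemma \ref{lma-H-kappa}, $\hat\kappa_\alpha = 1/r(\rho) + O(\rho^{-1-\tau})$ for every $\alpha$, so $r(\rho)\hat W - I$ is a symmetric endomorphism of $T\Sigma_\rho$ with eigenvalues $O(\rho^{-\tau})$. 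Consequently
\begin{equation*}
|\hat\nabla_T Y|_{\hat g} \le C\rho^{-\tau}\,|T|_{\hat g}.
\end{equation*}

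To convert this pointwise bound into an oscillation bound, I integrate along paths on $\Sigma_\rho$. For any $p, q \in \Sigma_\rho$, choose a curve $\gamma$ in $\Sigma_\rho$ joining them. Then
\begin{equation*}
|Y(p) - Y(q)| \le \int_\gamma |\hat\nabla_{\gamma'} Y|_{\hat g}\, ds_{\hat g} \le C\rho^{-\tau}\,L_{\hat g}(\gamma).
\end{equation*}
By Definition \ref{def-nearly-round}(iv) the intrinsic $g$-diameter of $\Sigma_\rho$ is at most $C\rho$, and by Lemma \ref{lma-est-A-hat-A} the induced metrics from $g$ and $\hat g$ on $\Sigma_\rho$ differ by a factor $1 + O(\rho^{-\tau})$, so the $\hat g$-intrinsic diameter is also $O(\rho)$ and $\gamma$ can be chosen with $L_{\hat g}(\gamma) \le C\rho$. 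This yields $|Y(p) - Y(q)| \le C\rho^{1-\tau}$, which is exactly the assertion after fixing $\mathbf{a}(\rho) := Y(p_0)$.

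The only mildly delicate step is the Weingarten identification of $\hat\nabla_T \hat\nu$ with the shape operator and the resulting sign that makes $Y$ exactly constant on a round sphere; everything else is direct application of the preceding lemmas. I expect no major obstacle beyond keeping these conventions consistent.
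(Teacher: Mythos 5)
Your proof is correct and is essentially identical to the paper's: the paper likewise differentiates $X-r(\rho)\hat\nu$ tangentially (in a frame diagonalizing $\hat A$, which is just your shape-operator computation in coordinates), gets $O(\rho^{-\tau})$ from Lemma \ref{lma-H-kappa}, and integrates over paths of length $O(\rho)$ using Definition \ref{def-nearly-round}(iv) before setting $\mathbf{a}(\rho)=X(p_0)-r(\rho)\hat\nu(p_0)$. Your explicit remarks on the $g$ versus $\hat g$ diameter and on $|\mathbf{a}(\rho)|=O(\rho)$ are minor details the paper leaves implicit.
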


\begin{proof}
 At any $p\in \S_\rho$, let $ \{ e_\alpha \}  \subset T_p \Sigma_\rho $ be a $\hat g$-orthonormal frame that diagonalizes $\hat A$, i.e.
 $\hat A( e_\alpha , e_\beta ) = \hat \kappa_\alpha \delta_{\alpha \beta} $, then
\bee \label{eq-grad-vector-function}
|\hat\nabla_{ e_\alpha }( X- r(\rho)\hat \nu)|= | e_\alpha -r(\rho)\hat\kappa_\alpha  e_\alpha |  =O(\rho^{ -\tau})
\eee
by Lemma \ref{lma-H-kappa}.
This and (iv) in Definition \ref{def-nearly-round}  imply
\bee
|(X(p)-r(\rho)\hat \nu(p))-(X(p_0)-r(\rho)\hat \nu(p_0))|=O(\rho^{1-\tau})
\eee
for all $p\in \S_\rho$ and a  fixed $p_0 \in \Sigma_\rho$.
Let  $\mathbf{a}(\rho)= X(p_0)-r(\rho)\hat \nu(p_0)$,  the lemma follows.
\end{proof}

\begin{lma} \label{lma-area-and-r}
For large $\rho$, the volume $ | \Sigma_\rho |$  satisfies
\bee
  \lf( \frac{ | \Sigma_\rho |  }{ \omega_{n-1} } \ri)^\frac{1}{n-1} =  r(\rho) ( 1 + O ( \rho^{-\tau} ) ).
\eee
\end{lma}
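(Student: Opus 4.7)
The plan is to reduce to the Euclidean volume $|\Sigma_\rho|_{\hat g}$ and compute the latter via the Euclidean Gauss map. By Lemma \ref{lma-est-A-hat-A}, $d\sigma = (1 + O(\rho^{-\tau}))\, d\hat\sigma$, so $|\Sigma_\rho| = (1 + O(\rho^{-\tau}))|\Sigma_\rho|_{\hat g}$, and it therefore suffices to show that $|\Sigma_\rho|_{\hat g} = \omega_{n-1}\, r(\rho)^{n-1}(1 + O(\rho^{-\tau}))$.

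To do this, I would consider the Euclidean Gauss map $G : \Sigma_\rho \to \mathbb{S}^{n-1}$, $G(p) = \hat\nu(p)$. Its Jacobian with respect to $\hat g$ is the product of the Euclidean principal curvatures of $\Sigma_\rho$, and by Lemma \ref{lma-H-kappa}, $\hat\kappa_\alpha = \frac{1}{r(\rho)} + O(\rho^{-1-\tau})$ for every $\alpha = 1, \dots, n-1$. Hence
\begin{equation*}
|\det dG| \, = \, \prod_{\alpha=1}^{n-1} \hat\kappa_\alpha \, = \, \frac{1}{r(\rho)^{n-1}}\bigl(1 + O(\rho^{-\tau})\bigr)
\end{equation*}
uniformly on $\Sigma_\rho$. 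In particular $dG$ is everywhere nonsingular, so $G$ is a local diffeomorphism between compact $(n-1)$-manifolds of the same dimension, hence a covering map. Since $\mathbb{S}^{n-1}$ is simply connected for $n \ge 3$, this covering is trivial and $G$ is a global diffeomorphism.

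Applying the change of variables formula then gives
\begin{equation*}
\omega_{n-1} \, = \, |\mathbb{S}^{n-1}| \, = \, \int_{\Sigma_\rho} |\det dG|\, d\hat\sigma \, = \, \frac{1 + O(\rho^{-\tau})}{r(\rho)^{n-1}}\, |\Sigma_\rho|_{\hat g},
\end{equation*}
which rearranges to $|\Sigma_\rho|_{\hat g} = \omega_{n-1}\, r(\rho)^{n-1}(1 + O(\rho^{-\tau}))$. Combining this with the first paragraph and extracting $(n-1)$th roots yields the desired estimate.

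The only delicate point is the global injectivity of $G$, which reduces to the topological fact that $\mathbb{S}^{n-1}$ is simply connected when $n \ge 3$; the local bijectivity is immediate from Lemma \ref{lma-H-kappa}. One could alternatively exploit Lemma \ref{lma-sphere-center} directly by studying the map $p \mapsto \mathbf{a}(\rho) + r(\rho)\hat\nu(p)$, which sends $\Sigma_\rho$ onto the Euclidean sphere of radius $r(\rho)$ centered at $\mathbf{a}(\rho)$; up to a translation and dilation this is the same computation, so I expect no additional difficulty from either formulation.
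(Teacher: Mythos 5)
Your argument is correct, and it takes a genuinely different route from the paper's. The paper first uses Lemma \ref{lma-sphere-center} to sandwich $\Sigma_\rho$ between two Euclidean spheres centered at $\mathbf{a}(\rho)$ of radii $r(\rho)(1+O(\rho^{-\tau}))$, and then compares the Euclidean areas by integrating the (positive) Laplacians of two distance functions over the intermediate annular regions -- a monotonicity argument that also invokes the strict convexity of $\Sigma_\rho$ for the outer comparison. You instead compute $|\Sigma_\rho|_{\hat g}$ directly from the Gauss--Kronecker curvature: the pointwise estimate $\hat\kappa_\alpha = r(\rho)^{-1} + O(\rho^{-1-\tau})$ of Lemma \ref{lma-H-kappa} gives $|\det dG| = r(\rho)^{-(n-1)}(1+O(\rho^{-\tau}))$ uniformly, and the covering-space argument (valid since $\Sigma_\rho$ is connected and $\mathbb{S}^{n-1}$ is simply connected for $n\ge 3$, with properness automatic by compactness) pins the degree of the Gauss map at one, so the change of variables closes the computation. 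Your approach buys independence from Lemma \ref{lma-sphere-center} altogether and replaces the geometric sandwich by a one-line area formula, at the modest cost of the topological input that $\Sigma_\rho$ is a sphere (which is part of Definition \ref{def-nearly-round} anyway); the paper's approach stays entirely within comparison geometry and would survive even if one only knew $\hat\kappa_\alpha>0$ together with the conclusion of Lemma \ref{lma-sphere-center}. The reduction $d\sigma = (1+O(\rho^{-\tau}))d\hat\sigma$ in your first paragraph coincides with the paper's first step, and extracting the $(n-1)$-th root at the end is harmless since $(1+O(\rho^{-\tau}))^{1/(n-1)} = 1+O(\rho^{-\tau})$.
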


\begin{proof} Let $ | \Sigma |_{\hat g} $ be  the volume of $\Sigma_\rho$ with respect to the metric induced
from $\hat g$.
Since $ d \sigma = ( 1 + O (\rho^{-\tau} ) ) d \hat \sigma $  by Lemma \ref{lma-est-A-hat-A},
we have
\be \label{eq-area-ratio}
\frac{ | \S_\rho | }{ | \S_\rho |_{\hat g} } = 1 + O (\rho^{-\tau} ) .
\ee
Now let $ \mathbf{a}(\rho)$ be the vector given in  Lemma 2.5, then
 $$  |X-\mathbf{a}(\rho)| =  r(\rho)\lf(1+ O( \rho^{-\tau} ) \ri).
 $$
Hence,
$\Sigma_\rho$ contains a Euclidean sphere ${S}_1$, centered at $ \mathbf{a}(\rho)$,  of radius $r_1$,
and is contained in another  Euclidean sphere $S_2$,  centered at $ \mathbf{a}(\rho)$,  of radius $r_2$,
such that both $r_1$ and $ r_2 $ satisfy
\be \label{eq-r1-and-r2}
  r_1 =  r(\rho)\lf(1+ O( \rho^{-\tau} ) \ri), \ r_2 =  r(\rho)\lf(1+ O( \rho^{-\tau} ) \ri) .
\ee
Let $\eta$ be the Euclidean distance function  from $\mathbf{a}(\rho)$. Integrating $\Delta_{\hat g} \eta$
over the domain bounded between $\Sigma_\rho$ and $S_1$, where $ \Delta_{\hat g}$
denotes the Euclidean Laplacian,   we conclude
 \bee
 |\Sigma_\rho|_{\hat g}\ge  | S_1|_{\hat g}
 =\omega_{n-1} r_1^{n-1} .
 \eee
 Hence,
 \be\label{e-AF-Hawking-3}
 \lf(\frac{|\Sigma_\rho|_{\hat g}}{\omega_{n-1}}\ri)^{\frac1{n-1}}\ge r_1 .
 \ee
 Next, let $\zeta$ be the Euclidean distance function from $\Sigma_\rho$, defined on the exterior of $ \Sigma_\rho $.
 For large $\rho$,  $\Sigma_\rho$ is strictly convex in $ \R^n$ by Lemma \ref{lma-sectional}.
 Therefore,  $\zeta$ is smooth   and $ \Delta_{\hat g} \zeta$ equals
 the mean curvature of the level set of $\zeta$ in  $ \R^n$  and hence is positive.
Integrating $\Delta_{\hat g}\zeta$ over the domain bounded by $S_{r_2}$ and  $\Sigma_\rho$,
 arguing  as before, we conclude
 \be\label{e-AF-Hawking-4}
 \lf(\frac{|\Sigma_\rho|_{\hat g}}{\omega_{n-1}}\ri)^{\frac1{n-1}}\le r_2 .
 \ee
Hence, by \eqref{eq-r1-and-r2} -- \eqref{e-AF-Hawking-4},
\be \label{eq-inter-between}
\lf(\frac{|\Sigma_\rho|_{\hat g}}{\omega_{n-1}}\ri)^{\frac1{n-1}} =  r(\rho)\lf(1+ O( \rho^{-\tau} ) \ri) .
\ee
The lemma now follows from \eqref{eq-area-ratio} and \eqref{eq-inter-between}.
\end{proof}

When $ n =3$,  Lemma \ref{lma-sectional} implies, for large $\rho$,
$ \Sigma_\rho$ can be isometrically embedded in $ \R^3$
as a convex surface (cf. \cite{Nirenberg}). In \cite{ShiWangWu09}, Shi, Wang and Wu proved
that the principal curvatures $\ko_1$, $\ko_2$ of such an embedding satisfy
\be \label{eq-SWW}
{\ko_1} = \frac{1}{r (\rho) } + O (\rho^{-1-\tau} )  \ \ \mathrm{and} \ \ {\ko_2} = \frac{1}{r (\rho)} + O (\rho^{-1-\tau} )
\ee
(see \cite[Theorem 4]{ShiWangWu09} and its proof).
In the next lemma, assuming the isometric embedding exists, we show that such estimates
 hold in higher dimensions .

\begin{lma} \label{lma-embedding-AF}
Suppose $ n \ge 4$ and assume
$\S_\rho$ can be isometrically embedded in $\R^n$ when  $\rho$ is sufficiently large.
Let $\S_\rho^{(0)}$ be the image of  the embedding.
Let $\ko_\alpha$, $\alpha =1, \ldots, n-1$, be the principal curvatures of $ \S_\rho^{(0)}$ in $ \R^n$.
Then
$$\ko_\alpha =\frac1{r(\rho)}+O(\rho^{-1-\tau}). $$
\end{lma}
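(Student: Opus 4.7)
The plan is to extract each $\ko_\alpha$ from the pairwise products $\ko_\alpha \ko_\beta$, which in turn are accessible through the Gauss equation for a hypersurface of $\mathbb{R}^n$ together with the intrinsic sectional curvature estimate from Lemma \ref{lma-sectional}.

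First, fix a point of $\Sigma_\rho^{(0)}$ and choose an orthonormal frame $\{e_\alpha\}_{1\le\alpha\le n-1}$ diagonalizing the second fundamental form of the embedding, so that $A^{(0)}(e_\alpha,e_\beta)=\ko_\alpha\delta_{\alpha\beta}$. Because $\Sigma_\rho^{(0)}$ is isometric to $\Sigma_\rho$, its intrinsic sectional curvatures are the ones estimated in Lemma \ref{lma-sectional}. The Gauss equation in $\mathbb{R}^n$ therefore gives, for each $\alpha\ne\beta$,
$$\ko_\alpha\ko_\beta=K(e_\alpha,e_\beta)=\frac{1}{r^2(\rho)}+O(\rho^{-2-\tau}).$$
In particular, for large $\rho$ every such pairwise product is strictly positive, so all of the $\ko_\alpha$ have the same sign at every point; hence $A^{(0)}$ is (sign-)definite on $\Sigma_\rho^{(0)}$.

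Next, since $n-1\ge 3$, for each index $\alpha$ I can pick two further distinct indices $\beta,\gamma$ and write
$$(\ko_\alpha)^2=\frac{(\ko_\alpha\ko_\beta)(\ko_\alpha\ko_\gamma)}{\ko_\beta\ko_\gamma}.$$
Each of the three factors on the right is $r^{-2}(\rho)(1+O(\rho^{-\tau}))$, and the denominator is bounded away from zero for large $\rho$, so a direct expansion yields
$$(\ko_\alpha)^2=\frac{1}{r^2(\rho)}+O(\rho^{-2-\tau}),$$
and therefore $|\ko_\alpha|=r(\rho)^{-1}+O(\rho^{-1-\tau})$.

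Finally, one must fix the sign. Since $A^{(0)}$ is definite at every point, $\Sigma_\rho^{(0)}$ is a closed strictly locally convex hypersurface in $\mathbb{R}^n$ and, by the Hadamard--Stoker theorem on ovaloids, bounds a convex body; choosing the outward unit normal to that body makes all $\ko_\alpha$ positive. Combined with the previous step, this gives $\ko_\alpha=\frac{1}{r(\rho)}+O(\rho^{-1-\tau})$. The only step that really uses $n\ge 4$ is the ratio identity for $(\ko_\alpha)^2$, which requires a third index distinct from $\alpha$ and $\beta$; this is why the case $n=3$, where only $\ko_1\ko_2$ is controlled, has to be treated separately as in \cite{ShiWangWu09}. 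I expect the main technical point to be justifying the global convexity conclusion (so that the sign is uniform and the outward normal is well defined), but given the pointwise definiteness of $A^{(0)}$ this is a direct application of standard rigidity results for closed locally convex hypersurfaces.
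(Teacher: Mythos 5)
Your proof is correct, and it takes a genuinely different---and shorter---route than the paper's. Both arguments start from the same input: by the Gauss equation every pairwise product $\ko_\alpha\ko_\beta$ ($\alpha\neq\beta$) is an intrinsic sectional curvature of $\Sigma_\rho$, hence equals $\frac1{r^2(\rho)}+O(\rho^{-2-\tau})$ by Lemma \ref{lma-sectional}. From there the paper proceeds indirectly: it contracts the Gauss equation to $R^{\rho}_{\alpha\alpha}=H_0\ko_\alpha-(\ko_\alpha)^2$, derives the lower bound $H_0\ge \frac{n-1}{r}+O(\rho^{-1-\tau})$ from $S^\rho=H_0^2-|\Pi_0|^2$ and Cauchy--Schwarz, writes each $\ko_\alpha$ as a root of the resulting quadratic, and then runs a contradiction argument (invoking $n\ge4$ twice) to show every $\ko_\alpha$ is the smaller root. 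Your identity $(\ko_\alpha)^2=(\ko_\alpha\ko_\beta)(\ko_\alpha\ko_\gamma)/(\ko_\beta\ko_\gamma)$ extracts $(\ko_\alpha)^2$ directly from three controlled products---this is exactly where $n\ge4$ (three distinct indices) enters---and bypasses the quadratic/root analysis entirely; since all three factors have the form $r^{-2}(1+O(\rho^{-\tau}))$, the division costs nothing (your phrase ``bounded away from zero'' should really be ``of size comparable to $r^{-2}$ with relative error $O(\rho^{-\tau})$'', but the computation is right). The sign determination is essentially common to both proofs: positivity of the pairwise products makes the second fundamental form definite, and with the outward normal (which the paper takes for granted and you justify via Hadamard's theorem; connectedness plus checking the sign at the point of $\Sigma_\rho^{(0)}$ farthest from a fixed interior point would already suffice) all $\ko_\alpha>0$. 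What your approach buys is brevity and transparency; what the paper's buys is the intermediate estimate on $H_0$ and a template that transfers verbatim to the hyperbolic analogue in Lemma \ref{l-AH-4}, where the same ratio trick would also work for $n\ge 4$ after replacing $\ko_i\ko_j$ by $\ko_i\ko_j=R^{\rho}_{ijij}+1$.
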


\begin{proof} Let $H_0$ be the mean curvature of $\S_\rho^{(0)}$ in $ \R^n$ with respect to the outward normal.
At a point $p_0 \in \S_\rho^{(0)}$, let $\{ e_\alpha^{(0)} \}$ be an orthonormal basis in $ T_{p_0} \S_\rho^{(0)}$ such that
$ e_\alpha^{(0)}$ points to  the principal direction of $\S_\rho^{(0)}$.
Let $R^{\rho}_{\alpha \beta \gamma \delta}$, $R^{\rho}_{\alpha \beta }$
and $S^\rho$  be the intrinsic curvature tensor, the Ricci tensor and the scalar curvature of $\S_\rho^{(0)}$ respectively.
For convenience,  let $r =r(\rho)$.
By Lemma \ref{lma-sectional},
\be\label{e-estimate-1}
 R^{\rho}_{\alpha \alpha }=\frac{n-2}{r^2}+O(\rho^{-2-\tau}), \ S^\rho=\frac{(n-1)(n-2)}{r^2}+O(\rho^{-2-\tau}).
 \ee
 Moreover, by the Gauss equation,
\be \label{eq-gauss-1}
\ko_\alpha \ko_\beta =R^{\rho}_{\alpha \beta \alpha \beta}=\frac1{r^2}+O(\rho^{-2-\tau}) , \ \forall \ \alpha \neq \beta.
\ee
 In particular, for large $\rho$, this implies $ \S_\rho^{(0)}$ is convex, i.e.
$ \ko_\alpha > 0 $, $\forall \ \alpha$. As a result, $H_0 > 0$.

Summing over indices in \eqref{eq-gauss-1} repeatedly, we have
 \be \label{eq-gauss-2}
 R^{\rho}_{\alpha \alpha}=H_0\ko_\alpha -(\ko_\alpha)^2
 \ee
and
\be \label{eq-gauss-3}
S^\rho = H_0^2 - | \Pi_0 |^2
\ee
 where $ \Pi_0 $ is  the second fundamental form of $\S_\rho^{(0)}$.
 By \eqref{eq-gauss-3} and  \eqref{e-estimate-1},
\bee
H_0^2
%=|\Pi_0|^2+S^\rho
\ge \frac1{n-1}H_0^2+\frac{(n-1)(n-2)}{r^2}+O(\rho^{-2-\tau}),
\eee
which implies
\be\label{e-estimate-2}
H_0\ge \frac{n-1}r+O(\rho^{-1-\tau}).
\ee
 because $H_0>0$. On the other hand,
by \eqref{eq-gauss-2},
$$
\ko_\alpha =\frac12\lf[H_0\pm (H_0^2-4R^{\rho}_{\alpha \alpha})^\frac12\ri].
$$
 We claim that,
\be \label{eq-kappa-root}
\ko_\alpha =\frac12\lf[H_0-(H_0^2-4R^{\rho}_{\alpha \alpha})^\frac12\ri], \ \forall \ \alpha.
\ee
Suppose not,  without losing generality, we may assume
\bee
\kappa_1^{(0)}=\frac12\lf[H_0 + (H_0^2 - 4R^{\rho}_{11 })^\frac12\ri] .
\eee
Then $ \kappa_1^{(0)} \ge \frac12 H_0 $. This  implies, $ \forall \ \alpha > 1 $,
\be\label{e-estimate-3}
  \kappa_\alpha^{(0)}=  \frac12\frac{ 4R^{\rho}_{\alpha \alpha }}{H_0+  (H_0^2 - 4R^{\rho}_{\alpha \alpha })^\frac12} ,
   \ee
for  otherwise we would have  $\ko_1+ \sum_{\alpha > 1} \ko_\beta > H_0$ since $ n \ge 4$ and $ \ko_\alpha > 0$, which is a contradiction.
By \eqref{e-estimate-1} and \eqref{e-estimate-2},
%Now we estimate $ H_0^2-4R^{\rho}_{\alpha \alpha}$
\be \label{e-estimate-3-5}
\begin{split}
H_0^2-4R^{\rho}_{\alpha \alpha }\ge & \ \frac{(n-1)^2}{r^2}-\frac{4(n-2)}{r^2}+O(\rho^{-2-\tau}) \\
= & \ \frac{(n-3)^2}{r^2}+O(\rho^{-2-\tau}) .
\end{split}
\ee
Hence, it follows from \eqref{e-estimate-2}, \eqref{e-estimate-3} and \eqref{e-estimate-3-5} that, $ \forall \ \alpha > 1 $,
  \be \label{e-estimate-5}
  \begin{split}
  \kappa_\alpha^{(0)}
    \le & \ \frac{\frac{2(n-2)}{r^2 }+O(\rho^{-2-\tau})}{\frac{2(n-2)}{r}+O(\rho^{-1-\tau}) }\\
  =& \ \frac1r+O(\rho^{-1-\tau}).
\end{split}
  \ee
On the other hand,
for any  $ \beta $, $ \gamma$ such that    $ \beta > 1$,   $ \gamma >1$ and $ \beta \neq \gamma$,
  \be \label{eq-gauss-1-1}
  \kappa_\beta^{(0)}\kappa_\gamma^{(0)} =\frac1{r^2}+O(\rho^{-2-\tau})
  \ee
  by \eqref{eq-gauss-1}. (As $ n \ge 4$, such indices $\beta $ and $ \gamma$ exist.)
  Hence,  \eqref{e-estimate-5} and \eqref{eq-gauss-1-1} show
  \bee
  \frac1{r^2}+O(\rho^{-2-\tau})\le \kappa_\beta^{(0)}\lf(\frac1r+O(\rho^{-1-\tau})\ri) ,
  \eee
which gives
  \be \label{e-estimate-6}
  \begin{split}
  \kappa_\beta^{(0)}
  \ge & \ \frac1r+O(\rho^{-1-\tau}).
\end{split}
  \ee
Therefore,  by \eqref{e-estimate-5} and \eqref{e-estimate-6},
  \be\label{e-estimate-7}
  \kappa_\alpha ^{(0)}=\frac1r+O(\rho^{-1-\tau}),
  \ee
  for all  $ \alpha >  1$,
  But then
  \bee
  \frac1{r^2}+O(\rho^{-2-\tau}) =  \kappa_1^{(0)}\kappa_2^{(0)}
  \ge  \frac12 H_0 \kappa_2^{(0)}
  \ge  \frac{n-1}{2r^2}+O(\rho^{-2-\tau}),
  \eee
  which is impossible since  $n \ge 4$.
  Therefore,
  the claim
  \eqref{eq-kappa-root} holds.
  Now \eqref{e-estimate-3} is valid  for all $\alpha \ge 1$. Repeating the argument leading to \eqref{e-estimate-7}, we conclude
  \eqref{e-estimate-7} holds for all $ \alpha \ge 1$.  This completes  the proof.
\end{proof}

We now recall the statement of Theorem \ref{t-intro-1} and give its proof.

%are  in a position to prove the following theorem, whose content  contains Theorem \ref{t-intro-1}.

\begin{thm}  \label{thm-section-AF}
Let $ \{ \Sigma_\rho \}$ be a family of nearly round hypersurfaces in an asymptotically flat manifold $(M^n, g)$
of dimension $ n \ge 3 $. Then
\be \label{eq-thm-S-H}
 \lim_{\rho \to \infty}  \frac{c_n}{2}  \lf(\frac{|\Sigma_\rho| }{\omega_{n-1}}\ri)^{\frac1{n-1}}
 \int_{\S_\rho}   \lf(    S^\rho - \frac{n-2}{n-1} H^2   \ri) d\sigma =  \m  .
 \ee
If in addition  $ \Sigma_\rho $ can be isometrically embedded in $ \R^n$
when $\rho$ is   sufficiently large, which is automatically satisfied if  $n=3$,
then
\be \label{eq-thm-BY}
  \lim_{\rho \to \infty}  2 b_n \int_{\S_\rho}  (H_0 - H )d\sigma  =  \m
\ee
and
\be \label{eq-thm-sigma-2}
  \lim_{\rho \to \infty}  c_n    \lf(\frac{|\Sigma_\rho| }{\omega_{n-1}}\ri)^{\frac1{n-1}}    \int_{\S_\rho} \sum_{\alpha < \beta} \lf( \ko_\alpha \ko_\beta  - \kappa_\alpha \kappa_\beta \ri) d \sigma
=  \m   .
\ee
Here  $ | \Sigma_\rho |$, $ S^\rho$  are  the volume, the scalar curvature  of $\Sigma_\rho$, respectively;
 $H$, $ \{ \kappa_\alpha\}$  are the mean curvature, the principal curvatures  of $\Sigma_\rho$ in $ (M^n ,g )$,  respectively;  $H_0$,  $ \{ \ko_\alpha\}$  are the mean curvature,  the principal curvatures
  of the isometric embedding of $\Sigma_\rho$ in $ \R^n$, respectively;
 and $\m $ is the total mass of $(M^n, g)$.
\end{thm}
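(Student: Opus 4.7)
The plan is to deduce all three identities from a common intermediate reduction, namely
\begin{equation*}
\m = -c_n \lim_{\rho \to \infty} r(\rho) \int_{\Sigma_\rho} G_0^g(\nu, \nu)\, d\sigma,
\end{equation*}
with $r(\rho)$ as in Lemma \ref{lma-H-kappa}. The starting point is the extension of Theorem \ref{t-mass}(i) to Lipschitz hypersurfaces given in \cite[Theorem 2.1]{MiaoTam2015}, which applies to $\{\Sigma_\rho\}$ thanks to Definition \ref{def-nearly-round}(i) and (iii), yielding $\m = -c_n \lim_\rho \int_{\Sigma_\rho} G_0^g(X, \nu_g)\, d\sigma_g$. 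Writing $X = r(\rho)\nu + Y$, Lemmas \ref{lma-est-A-hat-A} and \ref{lma-sphere-center} combine to give $|Y - \mathbf{a}(\rho)| = O(\rho^{1-\tau})$. Since $|G_0^g| = O(\rho^{-2-\tau})$ and $|\Sigma_\rho| = O(\rho^{n-1})$, the remainder $Y - \mathbf{a}(\rho)$ contributes $O(\rho^{n-2-2\tau}) = o(1)$ to the integral because $\tau > \frac{n-2}{2}$, reducing everything to the contribution of $\mathbf{a}(\rho)$.

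The delicate point is $\int_{\Sigma_\rho} G_0^g(\mathbf{a}(\rho), \nu)\, d\sigma$, since $|\mathbf{a}(\rho)|$ need only be $O(\rho)$. Using the contracted Bianchi identity $\div_g G_0^g = 0$, integration by parts over the region $\Omega_\rho^R$ between $\Sigma_\rho$ and a Euclidean sphere $S_R$ yields, for any constant $c \in \R^n$,
\begin{equation*}
\int_{\Sigma_\rho} G_0^g(c, \nu)\, d\sigma - \int_{S_R} G_0^g(c, \nu)\, d\sigma = \int_{\Omega_\rho^R} \langle \nabla c, G_0^g \rangle\, dV,
\end{equation*}
and because $|\nabla c| = |\Gamma \cdot c| = O(|c|\rho^{-1-\tau})$, the volume integral is bounded uniformly in the direction of $c$ by $C|c|\rho^{n-3-2\tau}$. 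The same estimate applied between $S_R$ and $S_{R'}$ shows that $L(c) := \lim_{R \to \infty} \int_{S_R} G_0^g(c, \nu)\, d\sigma$ exists with tail $O(|c|R^{n-3-2\tau})$, and translation invariance of the ADM mass (apply the extended Theorem \ref{t-mass}(i) in the shifted coordinates $y = x - c$ to the same family $\{S_R\}$ and subtract from the original formula) forces $L(c) = 0$. Setting $c = \mathbf{a}(\rho)$ and letting $R \to \infty$ then gives $\big|\int_{\Sigma_\rho} G_0^g(\mathbf{a}(\rho), \nu)\, d\sigma\big| = O(\rho \cdot \rho^{n-3-2\tau}) = O(\rho^{n-2-2\tau}) = o(1)$, completing the reduction.

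Each of the three identities now follows from an appropriate Gauss-equation rewriting of $G_0^g(\nu,\nu)$. The Gauss equation in $(M^n, g)$ together with $|A|^2 = \frac{H^2}{n-1} + |\ringA|^2$ gives $-2 G_0^g(\nu, \nu) = S^\rho - \frac{n-2}{n-1} H^2 + |\ringA|^2$, which produces \eqref{eq-thm-S-H} after replacing $r(\rho)$ by $(|\Sigma_\rho|/\omega_{n-1})^{1/(n-1)}$ via Lemma \ref{lma-area-and-r} (the difference contributes only $o(1)$ by Definition \ref{def-nearly-round}(ii)). Subtracting the ambient Gauss equation $H_0^2 - |\Pi_0|^2 = S^\rho$ in $\R^n$ from the one in $(M^n,g)$ yields the exact identity $\sum_{\alpha<\beta}(\ko_\alpha \ko_\beta - \kappa_\alpha \kappa_\beta) = -G_0^g(\nu,\nu)$ without any lower-order correction, giving \eqref{eq-thm-sigma-2} directly. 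For \eqref{eq-thm-BY}, the same comparison combined with the traceless decompositions of $|A|^2$ and $|\Pi_0|^2$ rewrites the identity as
\begin{equation*}
-2 G_0^g(\nu, \nu) = \tfrac{n-2}{n-1}(H_0+H)(H_0-H) + |\ringA|^2 - \Big| \Pi_0 - \tfrac{H_0}{n-1}\sigma \Big|^2;
\end{equation*}
Lemmas \ref{lma-H-kappa} and \ref{lma-embedding-AF} give $H_0 + H = \frac{2(n-1)}{r(\rho)} + O(\rho^{-1-\tau})$ and show both traceless quantities are $O(\rho^{-2-2\tau})$, so inverting produces the pointwise identity $2b_n(H_0 - H) = -c_n\, r(\rho)\, G_0^g(\nu, \nu) + O(\rho^{-1-2\tau})$ (using $\frac{2b_n}{n-2} = c_n$), whose integral is $o(1)$-close to the reduction. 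The main obstacle throughout is the treatment of the constant-vector piece $\mathbf{a}(\rho)$, whose possibly linear-in-$\rho$ size is exactly balanced at the critical decay $\tau > \frac{n-2}{2}$ by combining the Bianchi tail estimate with the translation invariance of the ADM mass.
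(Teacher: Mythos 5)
Your argument is correct, and its overall architecture coincides with the paper's: reduce via the extended Ricci-tensor mass formula to $-c_n\, r(\rho)\int_{\Sigma_\rho}G^g_0(\nu,\nu)\,d\sigma$, and then convert $G^g_0(\nu,\nu)$ by the Gauss equation (your traceless-decomposition bookkeeping for \eqref{eq-thm-BY} is equivalent to the paper's pairwise expansion \eqref{eq-sigma-2-difference}, and your exact identity $\sum_{\alpha<\beta}(\ko_\alpha\ko_\beta-\kappa_\alpha\kappa_\beta)=-G^g_0(\nu,\nu)$ is exactly \eqref{e-gauss-1-pf}). The one step where you genuinely diverge is the delicate term $\int_{\Sigma_\rho}G^g_0(\mathbf{a}(\rho),\nu)\,d\sigma$. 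The paper extends $g_{ij}$ smoothly to all of $\R^n$, replaces $\Ric$ and $\mathcal{S}_g$ by their linearizations \eqref{eq-Ricci}--\eqref{eq-scalar}, and applies the Euclidean divergence theorem twice over the \emph{enclosed solid region} $D_\rho$ to exhibit an exact cancellation of the linear parts, leaving only quadratic errors. You instead work on the \emph{exterior} annulus with the exact contracted Bianchi identity $\div_g G^g_0=0$, push the flux out to $S_R$, and kill the limit $L(c)=\lim_R\int_{S_R}G^g_0(c,\nu)\,d\sigma$ by translation invariance of the ADM mass together with a second application of the extended Theorem \ref{t-mass}(i) in shifted coordinates. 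Both arguments close precisely at the critical decay $\tau>\frac{n-2}{2}$; yours avoids extending the metric inward and any linearized curvature formulas, at the cost of invoking coordinate (translation) invariance of $\m$ and the applicability of the extended mass formula to off-center spheres --- both standard, and both legitimately available from \cite{MiaoTam2015}. Two small points to tidy up: for $n=3$ the estimate $\ko_\alpha=\frac{1}{r(\rho)}+O(\rho^{-1-\tau})$ comes from \eqref{eq-SWW} rather than Lemma \ref{lma-embedding-AF}, which is stated only for $n\ge 4$; and when you trade $r(\rho)$ for $(|\Sigma_\rho|/\omega_{n-1})^{1/(n-1)}$ you should record that the integrals being multiplied are themselves $O(\rho^{n-3-\tau})$ (from $G^g_0(\nu,\nu)=O(\rho^{-2-\tau})$ plus $|\ringA|^2=O(\rho^{-2-2\tau})$), so that the $r(\rho)\,O(\rho^{-\tau})$ discrepancy from Lemma \ref{lma-area-and-r} indeed contributes $O(\rho^{n-2-2\tau})=o(1)$.
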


\begin{proof}
For simplicity,  denote $G^g_0$ by $G$ and denote $ r(\rho)$ by $ r$.
On $\Sigma_\rho$,  by Lemma \ref{lma-sphere-center} and Lemma \ref{lma-est-A-hat-A}, we have
\be\label{e-BY-1}
\begin{split}
  G (X,\nu)
= & \   G(r  \hat\nu,\nu)+ G (\mathbf{a}(\rho) ,\nu)+O(\rho^{-1-2\tau}) \\
= & \ r  G (\nu,\nu)+ G (\mathbf{a}(\rho) ,\nu)+O(\rho^{-1-2\tau})  .
\end{split}
\ee

We first    estimate   $ \int_{\Sigma_\rho} G (\mathbf{a}(\rho) ,\nu) d \sigma$.
To do so, we note    the following asymptotic formulae    of $\Ric (g) $
and $ \mathcal{S}_g $ (see (2.2) and (2.6) in  \cite{MiaoTam2015}):
   \be \label{eq-Ricci}
   2\Ric(g)_{ij}=g_{ki,kj}+g_{kj,ki}-g_{ij,kk}-g_{kk,ij}+O( \rho^{-2-2\tau})  ,
   \ee
   \be \label{eq-scalar}
   \mathcal{S}_g =g_{ik,ik}-g_{kk,ii}+O(\rho^{-2-2\tau}).
   \ee
   Here and below, summation is performed over any pair of repeated indices.

Write   $\mathbf{a}(\rho)=(a^1,\dots,a^n)$, which is a constant vector  when $\rho $ is fixed.
By  Lemma \ref{lma-est-A-hat-A},  Lemma \ref{lma-sphere-center}, \eqref{eq-Ricci} and \eqref{eq-scalar}, we have
   \be \label{e-2nd-term-1}
   \begin{split}
  2 G (\mathbf{a}(\rho),\nu)d\sigma
=& \ \bigg[\lf(g_{ki,kj}+g_{kj,ki}-g_{ij,kk}-g_{kk,ij}\ri)a^i\hat\nu^j
   \\
   &- \lf(g_{ik,ik}-g_{kk,ii}\ri) a^m\hat\nu^m+O(\rho^{-1-2\tau})\bigg]d\hat\sigma .
   \end{split}
   \ee
%   where we also used the fact  $|\mathbf{a}(\rho)| =  O ( \rho) $ by Lemma \ref{lma-sphere-center}.
In what follows, we arbitrarily extend $g_{ij}$ as a smooth function  on the whole Euclidean
space $ \R^n$ such that $ g_{ij}$ remains unchanged on $ \R^n \setminus \{ | x | \le 2 r_0 \}$
(see Definition \ref{def-AF}).
We identify $ \Sigma_\rho $ with its image under the diffeomorphism that defines the coordinates $\{ x_i \}$.
Let $ D_\rho \subset \R^n$ be the bounded region enclosed by $ \Sigma_\rho$ and let $ d \hat V$ be the
Euclidean volume form on $ \R^n$. Then
   \be \label{e-2nd-term-2}
   \begin{split}
  & \ \int_{\S_\rho}\bigg[\lf(g_{ki,kj}+g_{kj,ki}-g_{ij,kk}-g_{kk,ij}\ri)a^i\hat\nu^j ]d\hat\sigma \\
   =& \ \int_{D_\rho} \lf(g_{ki,kjj}+g_{kj,kij}-g_{ij,kkj}-g_{kk,ijj}\ri)a^i d \hat V \\
   =& \ \int_{D_\rho} \lf( g_{kj,kij} -g_{kk,ijj}\ri)a^i d\hat V  \\
   =& \ \int_{\S_\rho} \lf( g_{kj,kj} -g_{kk,jj}\ri)a^i \hat \nu^i d \hat \sigma .
   \end{split}
   \ee
It  follows from  \eqref{e-2nd-term-1}, \eqref{e-2nd-term-2} and (iii) in Definition \ref{def-nearly-round}
that
   \be \label{e-2nd-term}
   2 \int_{\S_\rho} G (\mathbf{a}(\rho) ,\nu)d\sigma
   =  o(1) .
   \ee

 By  \eqref{e-BY-1} and \eqref{e-2nd-term},
 we conclude
 \be \label{eq-Gmass}
 \int_{\S_\rho} G(X, \nu) d \sigma  = r \int_{\S_\rho} G(\nu, \nu) d \sigma   + o(1) .
 \ee
   The Gauss equation implies
\be  \label{e-gauss-1-pf}
\begin{split}
G (\nu,\nu) =  & \ \frac12 \lf( H^2 - | A|^2 - S^\rho \ri) \\
 = & \  \sum_{\alpha < \beta} ( \kappa_\alpha \kappa_\beta - \ko_\alpha \ko_\beta ).
\end{split}
\ee
By Lemma \ref{lma-H-kappa} and Lemma  \ref{lma-embedding-AF}, when $n \ge 4$,
\be  \label{eq-sigma-2-difference}
 \begin{split}
&  \  \kappa_\alpha \kappa_\beta - \kappa_\alpha^{(0)}\kappa_\beta ^{(0)} \\
 = & \ \lf(\kappa_\alpha -\kappa_\alpha ^{(0)}\ri)\kappa_\beta  + \lf(\kappa_\beta -\kappa_\beta ^{(0)}\ri)\kappa_\alpha^{(0)}\\
 =&\lf(\kappa_\alpha - \kappa_\alpha^{(0)}\ri)\lf(\kappa_\beta -\frac1r \ri) + \lf(\kappa_\beta -\kappa_\beta^{(0)}\ri)\lf(\kappa_\alpha ^{(0)}-\frac1r\ri)\\
 &+\frac1r\lf(\kappa_\alpha +\kappa_\beta -\ko_\alpha -\ko_\beta \ri)\\
 =&\frac1r\lf(\kappa_\alpha +\kappa_\beta -\ko_\alpha -\ko_\beta \ri)+O(\rho^{-2- 2 \tau}).
 \end{split}
 \ee
 When $ n =3$, \eqref{eq-sigma-2-difference} is also true   by Lemma \ref{lma-H-kappa}
 and  the estimate \eqref{eq-SWW}.
Therefore, by \eqref{e-gauss-1-pf}, \eqref{eq-sigma-2-difference} and
 the fact $ | \ringA | = O(\rho^{-1-\tau})$,
 we  have
 \be\label{e-BY-2}
\begin{split}
 r\int_{\S_\rho}  G (\nu,\nu)d\sigma
=  (n-2) \int_{\S_\rho}  (H - H_0 )d\sigma + o(1)
\end{split}
\ee
and
 \be \label{e-BY-2-5}
 r\int_{\S_\rho}  G(\nu,\nu)d\sigma
=  r\int_{\S_\rho} \frac12  \lf( \frac{n-2}{n-1} H^2  - S^\rho \ri) d\sigma + o (1) .
\ee
The theorem  now  follows readily   from \eqref{eq-Gmass}, \eqref{e-gauss-1-pf},
\eqref{e-BY-2},   \eqref{e-BY-2-5}, Lemma \ref{lma-area-and-r}
and Theorem \ref{t-mass}(i) with $\{ S_r \}$ replaced by  $\{\Sigma_\rho\}$
 (see \cite[Theorem 2.1]{MiaoTam2015}).
\end{proof}

\section{Limits of quasi-local mass integrals in AH  manifolds} \label{section-ah}

Given  a compact manifold $(\Omega^n,g)$ with boundary $\Sigma $  and assuming  $\Sigma $  can be isometrically embedded in $\H^n$,
 motivated by the work \cite{WangYau2007,ShiTam2007},
 we are interested in  the
vector-valued integral
\be\label{e-ql-AH}
\mathbf{m}(\Omega,\Sigma )=\int_{\Sigma }(H_0-H) \,  \mathbf{x} \, d\sigma ,
\ee
where $H$ is the mean curvature of $\Sigma $
 in $(\Omega^n, g)$,
$H_0$ is the mean curvature of the embedding of $\Sigma $  in $\H^n$, and $\mathbf{x}$ is
the position vector of points in  $ \H^n \subset \R^{n,1}$,
where
\be\label{e-Hn}
\H^n=\{(x^0,x^1,\dots,x^n)\in \R^{n,1}|\ (x^0)^2-\sum_{i=1}^n(x^i)^2=1, x^0>0\} .
\ee
When $n=3$, if $\Sigma $ is homeomorphic to $ \mathbb{S}^2$ with Gaussian curvature larger than $-1$,
then $\Sigma $ can be isometrically embedded in $\H^3$ and the embedding is unique up to isometries of $\H^3$ (cf. \cite{P}).
Note that the integral  in \eqref{e-ql-AH} depends also on the embedding of $\Sigma$.

In this section, we are interested in analyzing the asymptotic behavior of
$\mathbf{m}(\Omega_\rho, \Sigma_\rho)$, together with other related geometric quantities,
as $\rho \rightarrow \infty$. Here
 $ \Omega_\rho $ is  the bounded domain enclosed by $ \Sigma_\rho $
 on an asymptotically hyperbolic manifold $(M^n, g)$ defined in Definition \ref{d-AH}.
We continue to use   $H$,  $ {S}^\rho $ to   denote the mean curvature,
the scalar curvature respectively of the hypersurfaces $\Sigma_\rho$, which
are  the geodesic spheres  in  the background $\H^n$.

Recall that
the ball model $(B^n,g_0)$ of $\H^n$ is given by
$$
B^n=\{x\in \R^n|\ |x|<1\},\ \ g_0=\frac{4}{(1-|x|^2)^2} g_e ,
$$
where $g_e$ is the standard Euclidean metric.

\begin{thm}\label{t-AH-1}
Let  $(M^n,g)$ be an asymptotically hyperbolic manifold.
  Let $\Sigma_\rho$ be  the hypersurface in $M^n$ corresponding to the
  geodesic sphere of radius $\rho$  in  $\H^n$ as specified  in Definition \ref{d-AH}.
  Let $\{ V^{(i)} \}_{0\le i\le n-1} $ be the functions given in \eqref{e-def-Vi}.
  Let $ | \Sigma_\rho |$ denote the volume of $\Sigma_\rho$ and $ d \sigma_\rho$ be the volume element on $\Sigma_\rho$.
  Then the following are true

\begin{enumerate}
  \item [(I)]
   \begin{itemize}
   \item[(a)]   \bee \label{eq-MV0-Hmass}
\begin{split}
& \ \mathbf{M}(g) (V^{ (0)}) \\
=  & \   \lim_{ \rho \rightarrow \infty}   \frac{c_n}{2}    \lf( \frac{ | \Sigma_\rho |  }{ \omega_{n-1} } \ri)^\frac{1}{n-1}
  \int_{\Sigma_\rho}  \lf[   {S}^\rho  - \frac{n-2}{n-1} H^2  + (n-1)(n-2)\ri]  d\sigma_\rho     .
\end{split}
\eee
    \item[(b)]   If  $\tau>n-1$, then for $1\le i\le n$,
    \bee
\begin{split}
& \ \mathbf{M}(g) (V^{ (i)}) \\
=  & \   \lim_{ \rho \rightarrow \infty}  \frac{c_n}{2}    \lf( \frac{ | \Sigma_\rho |  }{ \omega_{n-1} } \ri)^\frac{1}{n-1}
    \int_{\Sigma_\rho} \frac{x^i}{|x|} \lf[  {S}^\rho -  \frac{n-2}{n-1} H^2 + (n-1)(n-2) \ri]  d\sigma_\rho     ,
\end{split}
    \eee
where $\{ x^i \} $ are the coordinate functions in the ball model  $(B^n, g_0)$ of $\H^n$.
  \end{itemize}

  \item [(II)] Let $ g_\rho$ denote  the  metric on $ \Sigma_\rho$ induced from $g$.
Suppose    $(\Sigma_\rho,g_\rho)$ can be isometrically embedded in $\H^n$ for sufficiently  large $\rho$,
 which is automatically satisfied   when $n=3$.
 \begin{enumerate}
\item[(a)]  Suppose $ \tau > n-1$.   If $n=3$, also assume the decay conditions as in Lemma \ref{l-AH-4}.
Then, for each $0 \le i \le n$,
$$ \mathbf{M}(g) ( V^{(i)} ) =  2 b_n \lim_{\rho \rightarrow \infty} \int_{\Sigma_\rho} (H_0 - H) V^{(i)} \, d \sigma_\rho , $$
where $H$, $H_0$ are the mean curvature of $\Sigma_\rho$  in $(M^n, g)$, $\H^n$ respectively.

\item[(b)] Suppose $ \tau > n -1$ for $ n \ge 4 $ and $\tau>\frac52$ if $n=3$.
  If $n=3$, also assume the decay conditions as in Lemma \ref{l-AH-4}.
Suppose $\H^n$ is embedded in $\R^{n,1}$ as in \eqref{e-Hn},
then there exist isometric embeddings  of $(\Sigma_\rho, g_\rho )$ in $\H^n$ such  that
$$
\mathbf{M}(g)(V)=
2 b_n \lim_{\rho \to \infty}\int_{\Sigma_\rho }(H_0-H)\mathbf{x} \ d\sigma_\rho ,
$$
where
$ \mathbf{M}(g) (V)=\lf(\mathbf{M}(g) (V^{(0)}), \dots,\mathbf{M}(g) (V^{(n)})\ri)$
and $\mathbf{x} $ is   the position vector  of  the embedding  of $\S_\rho$  in $ \H^{n} \subset \R^{n,1}$.
\end{enumerate}
\end{enumerate}

\end{thm}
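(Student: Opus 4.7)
The plan is to follow the scheme of the asymptotically flat proof (Theorem \ref{thm-section-AF}), replacing the position vector $X$ and the tensor $G^g_0$ by the conformal Killing fields $X^{(i)}$ on $\H^n$ and the tensor $G^g_{-1}$, and applying Theorem \ref{t-mass}(ii) in place of (i). The starting identity is
\bee
\mathbf{M}(g)(V^{(i)}) = -c_n \lim_{\rho\to\infty}\int_{\Sigma_\rho} G^g_{-1}(X^{(i)},\nu_g)\,d\sigma_g.
\eee

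For Part (I), I would decompose $X^{(i)} = g(X^{(i)},\nu_g)\nu_g + X^{(i),\top}$ on $\Sigma_\rho$. The normal contribution is handled by the Gauss equation for $\Sigma_\rho\subset(M^n,g)$, which yields
\bee
G^g_{-1}(\nu_g,\nu_g)=-\frac{1}{2}\lf[S^\rho-\frac{n-2}{n-1}H^2+(n-1)(n-2)+|\ringA|^2\ri].
\eee
A direct calculation in the ball model gives $X^{(0)}=\sinh\rho\,\hat\nu_0$ with $\hat\nu_0=\p_\rho$, and $g_0(X^{(j)},\hat\nu_0)\to V^{(j)}$ as $\rho\to\infty$, while $\nu_g-\hat\nu_0=O(e^{-\tau\rho})$. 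Together with $(|\Sigma_\rho|/\omega_{n-1})^{1/(n-1)}=\sinh\rho\sim V^{(0)}$ and the fact that $|\ringA|^2$ contributes $o(1)$ after integration when $\tau>n/2$, this normal part produces exactly the integrands appearing in Part (I)(a) and Part (I)(b).

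The main obstacle is controlling the tangential piece $\int_{\Sigma_\rho} G^g_{-1}(X^{(i),\top},\nu_g)\,d\sigma_g$. For this I would exploit that $G^{g_0}_{-1}\equiv 0$ on $(\H^n,g_0)$ (since $\H^n$ is Einstein with the correct constant), the conformal Killing property $\mathcal{L}_{X^{(i)}}g_0=2V^{(i)}g_0$, and the Bianchi identity $\div(G^g_{-1})=0$. These combine into the standard divergence formula
\bee
\int_{\Sigma_\rho} G^g_{-1}(X^{(i)},\nu_g)\,d\sigma_g - \int_{\Sigma_{\rho_0}} G^g_{-1}(X^{(i)},\nu_g)\,d\sigma_g = -\frac{n-2}{2}\int_{\Omega}V^{(i)}(\mathcal{S}_g+n(n-1))\,dv_g,
\eee
which underlies the mass definition in \cite{ChruscielHerzlich,Herzlich}, and reduces the tangential piece to a bulk integral plus error terms controlled by asymptotic expansions of $\Ric(g)$ and $\mathcal{S}_g$ in a hyperbolic frame (the AH analogues of \eqref{eq-Ricci}--\eqref{eq-scalar}). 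For $i=0$ the tangential part of $X^{(0)}$ has size only $O(e^{(1-\tau)\rho})$, so $\tau>n/2$ suffices, which is why Part (I)(a) needs no extra decay. For $i\ge 1$ the tangential component of $X^{(j)}$ is of size $O(e^\rho)$ pointwise, and the sharper condition $\tau>n-1$ is required to absorb the error terms, explaining the hypothesis in Part (I)(b).

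For Part (II)(a), I apply the Gauss equation to the isometric embedding of $\Sigma_\rho$ into $\H^n$, obtaining
\bee
S^\rho-\frac{n-2}{n-1}H_0^2+(n-1)(n-2)=-|\ringA_0|^2,
\eee
so that substituting into the Part (I) integrand gives
\bee
S^\rho-\frac{n-2}{n-1}H^2+(n-1)(n-2)=\frac{n-2}{n-1}(H_0-H)(H_0+H)-|\ringA_0|^2.
\eee
Since $H,H_0\to n-1$, and since the numerical identity $c_n(n-2)=2b_n$ holds, the main term reproduces exactly $2b_n\int(H_0-H)V^{(i)}\,d\sigma_\rho$ after using $\sinh\rho\sim V^{(0)}$ (for $i=0$) and $(x^i/|x|)\sinh\rho\sim V^{(i)}$ (for $i\ge 1$). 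The remaining task is to show that $\int|\ringA_0|^2\,V^{(i)}\,d\sigma_\rho=o(1)$; this is the AH analogue of Lemma \ref{lma-embedding-AF} and demands sharp decay of the principal curvatures $\ko_\alpha$ of the $\H^n$-embedding toward $1$, which in $n=3$ is where Lemma \ref{l-AH-4} is invoked. Finally, Part (II)(b) follows from (a) by choosing the embedding of $\H^n$ into $\R^{n,1}$ so that the position vector $\mathbf{x}=(x^0,\ldots,x^n)$ coincides with $(V^{(0)},\ldots,V^{(n)})$, and using uniqueness of the embedding (via \cite{P} in dimension three) to select isometries consistently across the family $\{\Sigma_\rho\}$.
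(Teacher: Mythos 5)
Your strategy for Parts (I) and (II)(a) is essentially the paper's: start from Theorem \ref{t-mass}(ii), reduce the flux of $G^g_{-1}$ to its normal component via the Gauss equation, and control the angular contribution by a divergence/deformation-tensor argument in the spirit of Herzlich. One caveat there: the divergence identity you display is for the full field $X^{(i)}$ and by itself only shows that the total flux converges; it does not show that the tangential piece tends to zero. What is actually needed (Lemma \ref{l-AH-6}) is to apply the integration by parts to the angular component $Z$ of $X^{(k)}$ alone, whose deformation tensor with respect to $g_0$ is bounded but nonzero, over an annulus equipped with an interpolated metric agreeing with $g_0$ on the inner boundary (so that $G^{g_0}_{-1}=0$ kills the inner flux); the resulting bound $O(e^{(n-1-\tau)\rho})$ is exactly where $\tau>n-1$ enters. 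Since you identify the correct threshold and ingredients, I read this as incompleteness rather than error. Your (II)(a) route, rewriting the integrand as $\frac{n-2}{n-1}(H_0-H)(H_0+H)-|\ringA_0|^2$ via the $\H^n$ Gauss equation, is a mild repackaging of the paper's telescoping of $\sum_{\alpha<\beta}(\kappa_\alpha\kappa_\beta-\ko_\alpha\ko_\beta)$, and, as you note, still requires the sharp estimate $\ko_\alpha=\coth\rho+O(e^{-\tau\rho})$ of Lemma \ref{l-AH-4} (an upper bound on $H_0$, hence on $|\ringA_0|$, does not come for free).

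The genuine gap is Part (II)(b). You cannot ``choose the embedding so that the position vector coincides with $(V^{(0)},\dots,V^{(n)})$'': $\iota_\rho$ is an isometric embedding of $(\Sigma_\rho,g_\rho)$, whose image is only a perturbation of a geodesic sphere, so its position vector is determined only up to an isometry of $\H^n$ and is only approximately equal to $(V^{(0)},\dots,V^{(n)})$; moreover, uniqueness of the embedding (Pogorelov) is available only for $n=3$ and in any case yields no quantitative control. The missing ingredient is Lemma \ref{l-AH-7}: after normalizing by a suitable isometry $\Lambda_\rho$ — pinning down the center via Blaschke's rolling theorem and the rotational part via the induced distance function on $\mathbb{S}^{n-1}$ — one obtains $|(\Lambda_\rho\circ\iota_\rho)^{(i)}-V^{(i)}|=O(e^{(3-\tau)\rho})$. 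This estimate is what makes $\int_{\Sigma_\rho}(H_0-H)(\mathbf{x}^i-V^{(i)})\,d\sigma_\rho=O(e^{(n+2-2\tau)\rho})=o(1)$, and it is precisely the source of the extra hypothesis $\tau>\frac52$ when $n=3$, which you state but never use.
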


We will prove Theorem \ref{t-AH-1} via a series of lemmas. First,  we  fix  some notations.
Let $h_0$ be  the standard metric on the unit sphere $ \mathbb{S}^{n-1}$.
Choose    finitely many geodesic balls of radius $2$ in  $(\mathbb{S}^{n-1}, h_0)$,
so that the geodesic balls of  radius $1$ with the same centers cover $\mathbb{S}^{(n-1)}$.
In each geodesic ball, fix an orthonormal frame  $ \{ f_i  \}_{1 \le i  \le n-1}$.
Let $\{ \ve_i \}_{0 \le i \le n-1}$ be given as in Definition \ref{d-AH}.
Let $\nabla$, $\wt\nabla$ denote the covariant derivatives with respect to $g$, $g_0$ respectively,
where $g_0$ is the hyperbolic  metric on  $\H^n$.
Let $g_{ij} = g(\ve_i, \ve_j)$, ${g_0}_{ij} = g_0 ( \ve_i, \ve_j)$, and define
$ \{ \Gamma_{ij}^k \} $ and   $ \{ \wt\Gamma_{ij}^k \} $  by:
\be \label{e-def-gamma-t-gamma}
\nabla_{\ve_i}\ve_j=\Gamma_{ij}^k \ve_k, \ \
\wt\nabla_{\ve_i}\ve_j=\wt\Gamma_{ij}^k \ve_k.
\ee
Direct computation gives
  \be \label{e-gamma-tgamma}
\begin{split}
\Gamma_{ij}^k= & \ \frac12 g^{kl}\lf[ \ve_i(g_{jl}) +\ve_j(g_{il})  -\ve_l(g_{ij}) \ri. \\
& \ \lf. - g([\ve_i,\ve_l], \ve_j)-g([\ve_j,\ve_l],\ve_i)+ g([\ve_i,\ve_j],\ve_l)\ri], \\
\wt\Gamma_{ij}^k= & \ \frac12 \delta^{kl}   \lf[- g_0([\ve_i,\ve_l ], \ve_j)-g_0([\ve_j,\ve_l ],\ve_i)+ g_0([\ve_i,\ve_j],\ve_l)\ri] ,
\end{split}
\ee
for $ 0 \le i, j, k \le n-1$;
\be \label{e-Lie}
[\ve_0,\ve_i]=-\frac{\cosh \rho}{\sinh \rho}\ve_i, \  \ [\ve_i,\ve_j]=\frac1{ (\sinh \rho) }\sum_{k=1}^{ n-1} \lambda_{ij}^k\ve_k ,
\ee
for $1\le i, j\le n-1$, where $ \{ \lambda_{ij}^k \} $ are smooth functions in the geodesic balls of radius $2$ in $(\mathbb{S}^{n-1}, h_0)$,
and
\be  \label{e-R-tR}
\begin{split}
R(\ve_i,\ve_j,\ve_k,\ve_l)= & \ g_{pl}\lf\{\ve_i(\Gamma_{jk}^p)- \ve_j(\Gamma_{ik}^p) \ri. \\
& \ \lf. +\Gamma_{jk}^q\Gamma_{iq}^p-\Gamma_{ik}^q\Gamma_{jq}^p- (\Gamma_{ij}^q-\Gamma_{ji}^q)\Gamma_{qk}^p\ri\}\\
\wt R(\ve_i,\ve_j,\ve_k,\ve_l)= & \ \delta_{pl} { \lf\{\ve_i(\wt\Gamma_{jk}^p)- \ve_j(\wt\Gamma_{ik}^p) \ri. } \\
& \  \lf. +\wt\Gamma_{jk}^q\wt\Gamma_{iq}^p -\wt\Gamma_{ik}^q\wt\Gamma_{jq}^p - (\wt\Gamma_{ij}^q-\wt\Gamma_{ji}^q)\wt\Gamma_{qk}^p\ri\} ,
\end{split}
\ee
where $R$ and $\wt R$ are  the curvature tensors of $g$ and $g_0$ respectively,
It follows from \eqref{e-gamma-tgamma} -- \eqref{e-R-tR} that
\be \label{e-d-gamma-tgamma}
|\Gamma_{ij}^k-\wt\Gamma_{ij}^k|=O(e^{-\tau\rho})
\ee
and
\be \label{e-d-R-tR}
|R(X,Y,Z,W)-\wt R(X,Y,Z,W)|\le O(e^{-\tau\rho})|X|_{  g_0} |Y|_{ g_0} |Z|_{ g_0}  |W|_{  g_0}
\ee
for tangent vectors $X, Y, Z, W$.

\begin{remark} \label{rmk-grho}
 Because $\{ \ve_1, \ldots, \ve_{n-1} \}$ are tangential to $ \Sigma_\rho$, if we restrict all indices used in \eqref{e-def-gamma-t-gamma} --
\eqref{e-d-R-tR} to
the set $\{ 1, \ldots, n-1 \}$, then we obtain comparison between corresponding quantities on $(\Sigma_\rho, g_\rho)$ and $ (\Sigma_\rho, g_\rho^0)$. Here
$g_\rho$, $g_\rho^0$ denote the induced metric on $\Sigma_\rho$ from $g$, $g_0$ respectively.
\end{remark}

\begin{lma}\label{l-AH-2}
Let $\nu, \wt\nu$ be the unit outward normals to $ \Sigma_\rho$  and $A, \wt A$ be the second fundamental forms of  $\Sigma_\rho$ with respect to $g, g_0$ respectively. Then
\begin{enumerate}
  \item [(i)] $|\nu-\wt\nu|_{g_0}=O(e^{-\tau\rho})$.
  \item[(ii)] $\wt A(\ve_i,\ve_j)= (\coth\rho ) \delta_{ij}$, for $1\le i, j\le n-1$.
  \item [(iii)] $|A-\wt A|_{g_0}=O(e^{-\tau\rho}).$
\end{enumerate}
\end{lma}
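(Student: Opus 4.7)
The plan is to treat the three statements in order, using throughout the hypothesis $g_{ij} = g(\ve_i, \ve_j) = \delta_{ij} + O(e^{-\tau\rho})$ from Definition \ref{d-AH} together with the formulae \eqref{e-gamma-tgamma}, \eqref{e-Lie}, and the difference estimate \eqref{e-d-gamma-tgamma}.

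For part (i), I would expand $\nu = \sum_{k=0}^{n-1} a^k \ve_k$ in the $g_0$-orthonormal frame. The $g$-orthogonality of $\nu$ to $\ve_1, \ldots, \ve_{n-1}$, combined with $g(\nu,\nu) = 1$ and the outward orientation $g(\nu, \ve_0) > 0$, forms a linear system whose coefficient matrix $(g_{ij})$ equals $I + O(e^{-\tau\rho})$. Solving (for instance iteratively, or by inverting $(g_{ij})$) yields $a^0 = 1 + O(e^{-\tau\rho})$ and $a^\alpha = O(e^{-\tau\rho})$ for $\alpha \ge 1$. Then $|\nu - \wt\nu|_{g_0}^2 = (a^0 - 1)^2 + \sum_{\alpha \ge 1}(a^\alpha)^2 = O(e^{-2\tau\rho})$, giving (i). For part (ii), I would compute $\wt A(\ve_i, \ve_j) = -g_0(\wt\nabla_{\ve_i} \ve_j, \ve_0) = -\wt\Gamma_{ij}^0$ directly from \eqref{e-gamma-tgamma} and \eqref{e-Lie}: since $[\ve_i, \ve_j]$ is tangential for $i, j \ge 1$ while $[\ve_i, \ve_0] = -\coth\rho \, \ve_i$, one obtains $\wt\Gamma_{ij}^0 = -\coth\rho \, \delta_{ij}$ and hence $\wt A(\ve_i, \ve_j) = \coth\rho \, \delta_{ij}$.

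For part (iii), I would decompose $A(\ve_i, \ve_j) = -g(\nabla_{\ve_i}\ve_j, \nu) = -\Gamma_{ij}^k g_{kl} a^l$ using the coefficients $a^l$ from (i). The essential observation is that the hyperbolic Christoffel symbols exhibit two different scales: from \eqref{e-gamma-tgamma} and \eqref{e-Lie}, $\wt\Gamma_{ij}^\alpha = O(e^{-\rho})$ for $\alpha \ge 1$ (each bracket $[\ve_a, \ve_b]$ among tangential frames carries a $1/\sinh\rho$ factor), while $\wt\Gamma_{ij}^0 = -\coth\rho\, \delta_{ij} = O(1)$, and by \eqref{e-d-gamma-tgamma} the same bounds transfer to $\Gamma_{ij}^k$. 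From (i), $g_{0l} a^l = 1 + O(e^{-\tau\rho})$ and $g_{\alpha l} a^l = O(e^{-\tau\rho})$ for $\alpha \ge 1$. Assembling yields $A(\ve_i, \ve_j) = -\Gamma_{ij}^0 + O(e^{-\tau\rho})$; since $\wt A(\ve_i, \ve_j) = -\wt\Gamma_{ij}^0$, one more application of \eqref{e-d-gamma-tgamma} gives $|A(\ve_i, \ve_j) - \wt A(\ve_i, \ve_j)| = O(e^{-\tau\rho})$. The $g_0$-orthonormality of $\{\ve_i\}$ then upgrades this componentwise bound to $|A - \wt A|_{g_0} = O(e^{-\tau\rho})$.

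The main obstacle I anticipate is in (iii), where one must simultaneously track three scales: the $O(e^{-\tau\rho})$ perturbation of the ambient metric (controlling both $\nu - \wt\nu$ and $\Gamma_{ij}^k - \wt\Gamma_{ij}^k$), the unit-size $\Gamma_{ij}^0 \sim -\coth\rho\, \delta_{ij}$ encoding the nontrivial second fundamental form of hyperbolic geodesic spheres, and the $O(e^{-\rho})$ tangential Christoffel symbols $\Gamma_{ij}^\alpha$ for $\alpha \ge 1$ inherited from the warped-product factor $\sinh^2\rho\, h_0$. Careful bookkeeping is required to ensure that the $O(1)$ factor $\Gamma_{ij}^0$, when multiplied by off-diagonal terms of $g_{kl} a^l$, contributes only $O(e^{-\tau\rho})$, so that the sharp decay rate is preserved.
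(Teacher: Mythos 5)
Your proposal is correct and follows essentially the same route as the paper: part (ii) is the identical computation $\wt A(\ve_i,\ve_j)=-\wt\Gamma_{ij}^0$, and part (iii) rests on exactly the same ingredients (the metric decay, \eqref{e-d-gamma-tgamma}, and part (i)), merely expanded componentwise in the frame rather than via the paper's three-term telescoping of $-g(\nabla_{\ve_i}\ve_j,\nu)+g_0(\wt\nabla_{\ve_i}\ve_j,\wt\nu)$. The only cosmetic difference is in (i), where the paper gets $\nu=\ve_0+O(e^{-\tau\rho})$ in one line from $\nu=\nabla\rho/|\nabla\rho|_g$ instead of solving your (equally valid) linear system; note also that your bound $g_{\alpha l}a^l=O(e^{-\tau\rho})$ for $\alpha\ge 1$ is actually an exact vanishing, since $\ve_\alpha$ is tangential.
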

\begin{proof} (i) This follows from the fact
 $\nu=\frac{\nabla\rho}{|\nabla \rho|_g}$ and
$$
\nabla \rho= g^{ij}\ve_j(\rho)\ve_i =g^{i0}\ve_i
= \ve_0+O(e^{-\tau\rho})
= \wt\nu+O(e^{-\tau\rho}).
$$

(ii) For $1\le i\le n-1$, by \eqref{e-gamma-tgamma},
$$
\wt A(\ve_i,\ve_j)= -g_0(\wt\nabla_{\ve_i}\ve_j,\ve_0)
=-\wt \Gamma_{ij}^0
= (\coth\rho)  \delta_{ij}.
$$

(ii)
For $1\le i, j \le n-1$, by \eqref{e-gamma-tgamma} and \eqref{e-d-gamma-tgamma},
\bee
\begin{split}
& \ A( \ve_i,\ve_j )-\wt A( \ve_i , \ve_j ) \\
= & \ - g (\nabla_{\ve_i}  \ve_j ,\nu) + g_0(\wt\nabla_{\ve_i} \ve_j ,\wt\nu)\\
=& \ - g(\nabla_{\ve_i}  \ve_j -\wt\nabla_{\ve_i} \ve_j ,\nu) - (g-g_0)(\wt\nabla_{\ve_i} \ve_j ,\nu) \\
& \ - g_0(\wt\nabla_{\ve_i} \ve_j, \nu-\wt\nu)\\
=& O(e^{-\tau\rho}).
\end{split}
\eee
\end{proof}

\begin{lma}\label{l-AH-3}
 Let $ \{ \kappa_i\}_{1\le i\le n-1}$ and  $H$  be the principal curvatures and  the mean curvature of $\Sigma_\rho$
 in $(M^n, g)$ with respect to $ \nu$, respectively.
Let $K$  be the sectional curvature associated to  a tangent plane   in  $(\Sigma_\rho, g_\rho)$,
where $g_\rho$ is the induced metric from  $g$. Then
\bee
\left\{
  \begin{array}{ll}
    \kappa_i=& \ \coth\rho+O(e^{-\tau\rho})  \\
    H=& \ (n-1)\coth\rho+O(e^{-\tau\rho})  \\
    K=& \
    %-1+\coth^2\rho+O(e^{-\tau\rho}) .
   (\sinh \rho)^{-2}   +O(e^{-\tau\rho}) .
  \end{array}
\right.
\eee
\end{lma}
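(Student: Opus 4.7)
The plan is to derive all three estimates in Lemma \ref{l-AH-3} as direct consequences of Lemma \ref{l-AH-2} combined with the decay condition in Definition \ref{d-AH} and the curvature comparison estimate \eqref{e-d-R-tR}. Because $\coth \rho \to 1$ exponentially fast, error terms of the form $O(e^{-\tau \rho}) \cdot \coth \rho$ absorb into $O(e^{-\tau \rho})$ without further bookkeeping, so the whole argument becomes a perturbation exercise.

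For the principal curvatures, I would restrict the $g_0$-orthonormal frame $\{\varepsilon_i\}_{1 \le i \le n-1}$ to $\Sigma_\rho$. By Lemma \ref{l-AH-2}(ii)--(iii), the matrix $A_{ij} := A(\varepsilon_i, \varepsilon_j)$ equals $\coth \rho \, \delta_{ij} + O(e^{-\tau \rho})$, while by Definition \ref{d-AH} the induced metric satisfies $g_{ij} = \delta_{ij} + O(e^{-\tau \rho})$, so $g_\rho^{ij} = \delta^{ij} + O(e^{-\tau \rho})$ as well. The principal curvatures $\kappa_i$ are the eigenvalues of the shape operator $g_\rho^{ik} A_{kj}$, which is a perturbation of $\coth \rho \cdot I$ by an $O(e^{-\tau \rho})$ term; standard eigenvalue perturbation then yields $\kappa_i = \coth \rho + O(e^{-\tau \rho})$. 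The mean curvature follows immediately by summation, $H = \sum_i \kappa_i = (n-1) \coth \rho + O(e^{-\tau \rho})$, or equivalently by tracing $H = g_\rho^{ij} A_{ij}$ directly.

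For the sectional curvature, I would invoke the Gauss equation. At any point of $\Sigma_\rho$ choose a $g_\rho$-orthonormal basis $\{e_\alpha\}$ of the tangent space that diagonalizes $A$ with eigenvalues $\kappa_\alpha$. Then for distinct $\alpha, \beta$ the sectional curvature of the plane spanned by $e_\alpha, e_\beta$ equals $K^\rho(e_\alpha \wedge e_\beta) = R(e_\alpha, e_\beta, e_\beta, e_\alpha) + \kappa_\alpha \kappa_\beta$. The comparison \eqref{e-d-R-tR}, together with the fact that $g_0$ has constant sectional curvature $-1$, gives the ambient sectional curvature as $-1 + O(e^{-\tau \rho})$, while the principal curvature estimate yields $\kappa_\alpha \kappa_\beta = \coth^2 \rho + O(e^{-\tau \rho})$. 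Using the identity $\coth^2 \rho - 1 = (\sinh \rho)^{-2}$ gives $K = (\sinh \rho)^{-2} + O(e^{-\tau \rho})$, and this is independent of the choice of diagonalizing plane, hence holds for an arbitrary tangent plane.

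There is no serious obstacle here; the only point requiring care is verifying that the perturbation of eigenvalues under the simultaneous change of both the second fundamental form and the induced metric remains $O(e^{-\tau \rho})$. This is elementary linear algebra once one notes that $\coth \rho$ stays bounded for large $\rho$, so all cross terms of the type $O(e^{-\tau \rho}) \cdot \coth \rho$ or $O(e^{-\tau \rho}) \cdot O(e^{-\tau \rho})$ remain within the claimed error.
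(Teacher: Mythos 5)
Your proposal is correct and follows essentially the same route as the paper, which derives the lemma directly from Lemma \ref{l-AH-2}, the curvature comparison \eqref{e-d-R-tR}, and the Gauss equation. The only cosmetic remark is that for an \emph{arbitrary} tangent plane one should note that $A = \coth\rho\, g_\rho + O(e^{-\tau\rho})$ as a tensor, so the Gauss-equation term $A(X,X)A(Y,Y)-A(X,Y)^2$ equals $\coth^2\rho + O(e^{-\tau\rho})$ for any orthonormal pair, not just principal directions; this is implicit in your eigenvalue estimate.
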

\begin{proof} This  follows  from Lemma \ref{l-AH-2},  \eqref{e-d-R-tR} and the Gauss equation.
(The estimate  on $K$ can also be obtained by applying Remark \ref{rmk-grho}.)
 \end{proof}

\begin{lma}\label{l-AH-4}
Suppose $(\Sigma_\rho,g_\rho)$ can be isometrically embedded in $\H^n$ for large $\rho$.
Let $\{\ko_i\}_{1\le i\le n-1}$ be the principal curvatures of $\Sigma_\rho\subset \H^n$
with respect to the outward unit normal $\nu_0$.
 Then, for $n\ge4$,
$$\ko_i=\coth\rho+O(e^{-\tau\rho}) .$$
When $n=3$,  in addition to the asymptotic conditions specified in Definition \ref{d-AH}, if
for $1\le i, j, k, l, p, q\le 2$,
\be \label{e-h-derivative-condition}
\ve_p(\ve_k(\ve_l (g_{ij})))=O(e^{-   (\tau + 1)} \rho  ), \ve_q (\ve_p(\ve_k(\ve_l (g_{ij})))) =O(e^{-   (\tau +2)} \rho ),
\ee
 then the same conclusion also holds.
\end{lma}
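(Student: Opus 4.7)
\smallskip

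\noindent\textbf{Proof proposal.} The plan is to run the analog of the asymptotically flat argument (Lemma \ref{lma-embedding-AF}) in the hyperbolic setting, with $1/r$ replaced by $\coth\rho$ and $O(\rho^{-1-\tau})$ replaced by $O(e^{-\tau\rho})$. The key inputs will be Lemma \ref{l-AH-3} (which supplies the intrinsic sectional curvature estimate $K = (\sinh\rho)^{-2} + O(e^{-\tau\rho})$, hence $ R^\rho_{\alpha\alpha} = (n-2)(\sinh\rho)^{-2} + O(e^{-\tau\rho}) $ and $ S^\rho = (n-1)(n-2)(\sinh\rho)^{-2} + O(e^{-\tau\rho})$) together with the Gauss equation for $\Sigma_\rho \subset \H^n$, which in the form relevant here reads
\[
R^\rho_{\alpha\beta\alpha\beta} = \ko_\alpha \ko_\beta - 1, \quad R^\rho_{\alpha\alpha} = \ko_\alpha(H_0 - \ko_\alpha) - (n-2), \quad S^\rho = H_0^2 - |\Pi_0|^2 - (n-1)(n-2),
\]
since the ambient sectional curvature is $-1$.

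First I would handle $n\ge 4$. Combining the above Gauss formulas with the asymptotics from Lemma \ref{l-AH-3} gives, for $\alpha\ne\beta$,
\[
\ko_\alpha \ko_\beta = \coth^2 \rho + O(e^{-\tau\rho}), \qquad H_0^2 - |\Pi_0|^2 = (n-1)(n-2)\coth^2\rho + O(e^{-\tau\rho}).
\]
Since the pairwise products $\ko_\alpha\ko_\beta$ are positive for large $\rho$, all $\ko_\alpha$ have the same sign; choosing the outward normal one can arrange $H_0>0$. Cauchy--Schwarz ($|\Pi_0|^2 \ge H_0^2/(n-1)$) then yields $H_0 \ge (n-1)\coth\rho + O(e^{-\tau\rho})$. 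The per-eigenvalue equation $\ko_\alpha^2 - H_0\ko_\alpha + (n-2)\coth^2\rho + O(e^{-\tau\rho}) = 0$ has two roots
\[
\ko_\alpha = \tfrac{1}{2}\bigl[H_0 \pm \sqrt{H_0^2 - 4(n-2)\coth^2\rho + O(e^{-\tau\rho})}\bigr],
\]
with leading values $\coth\rho$ and $(n-2)\coth\rho$ (upon using $H_0 = (n-1)\coth\rho + O$). Mirroring the argument of Lemma \ref{lma-embedding-AF}, I would rule out the $(n-2)\coth\rho$ branch: if even one $\ko_\alpha$ sat at that value, then pairing it with another $\ko_\beta\in\{\coth\rho,(n-2)\coth\rho\}$ would force $\ko_\alpha\ko_\beta$ to equal $(n-2)\coth^2\rho$ or $(n-2)^2\coth^2\rho$ modulo $O(e^{-\tau\rho})$, contradicting $\ko_\alpha\ko_\beta = \coth^2\rho+O(e^{-\tau\rho})$ whenever $n\ge 4$. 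Hence every $\ko_\alpha$ takes the minus root, and $\ko_\alpha = \coth\rho + O(e^{-\tau\rho})$.

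The case $n=3$ is the main obstacle, because the Gauss equation yields only one scalar relation $\ko_1\ko_2 = \coth^2\rho + O(e^{-\tau\rho})$, which cannot separate the two principal curvatures on its own; this is precisely why the extra derivative hypothesis \eqref{e-h-derivative-condition} is imposed. My plan here is to adapt the argument of Shi--Wang--Wu \cite{ShiWangWu09} used to establish \eqref{eq-SWW}. Concretely, I would: (a) use Lemma \ref{l-AH-3} together with the sharpened derivative controls \eqref{e-h-derivative-condition} to show $|\nabla^{\rho} K|$, $|\nabla^{\rho,2} K|$ decay at the rates needed for the Codazzi equation on $\Sigma_\rho\subset\H^n$ to control $|\nabla^\rho H_0|$ and $|\nabla^\rho \Pi_0|$; (b) feed these into the two-dimensional identity $2|\mathring{\Pi}_0|^2 = H_0^2 - 2(K+1)$ and an elliptic estimate for the traceless second fundamental form of a nearly round isometric embedding in $\H^3$ (Pogorelov/Weyl-type a priori estimates near the umbilic round sphere), to conclude $|\mathring{\Pi}_0| = O(e^{-\tau\rho})$ and $H_0 = 2\coth\rho + O(e^{-\tau\rho})$; (c) combine with $\ko_1\ko_2 = \coth^2\rho + O(e^{-\tau\rho})$ to get $\ko_i = \coth\rho + O(e^{-\tau\rho})$. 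The bookkeeping of the higher derivative decay of the ambient metric through the Gauss and Codazzi equations will be the most delicate part, and is precisely what condition \eqref{e-h-derivative-condition} is tailored to deliver.
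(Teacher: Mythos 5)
Your $n\ge 4$ argument follows the same route as the paper (and as Lemma \ref{lma-embedding-AF}): Gauss equation, the lower bound $H_0\ge (n-1)\coth\rho+O(e^{-\tau\rho})$, and a dichotomy between the two roots of the per-eigenvalue quadratic. One caution: as written, your elimination of the bad branch is circular. You classify the roots as having "leading values $\coth\rho$ and $(n-2)\coth\rho$" by "using $H_0=(n-1)\coth\rho+O(e^{-\tau\rho})$", but at that point you only have the one-sided bound $H_0\ge(n-1)\coth\rho+O(e^{-\tau\rho})$; the matching upper bound on $H_0$ is equivalent to the conclusion you are proving, and a priori $H_0$ (hence the plus-root) could be large. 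The repair is exactly the argument of Lemma \ref{lma-embedding-AF} that you cite: the lower bound on $H_0$ alone forces the minus-root to satisfy
$$\ko_\alpha=\frac{2\lf(R^{\rho}_{\alpha\alpha}+(n-2)\ri)}{H_0+\sqrt{H_0^2-4\lf(R^{\rho}_{\alpha\alpha}+(n-2)\ri)}}\le \coth\rho+O(e^{-\tau\rho})$$
whatever the size of $H_0$, while any plus-root is $\ge\tfrac12 H_0$; playing these off against $\ko_\alpha\ko_\beta=\coth^2\rho+O(e^{-\tau\rho})$ (using $n-1\ge 3$ to find two minus-root indices) yields the contradiction. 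With that rewording this half is fine.

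The $n=3$ case is where the proposal has a genuine gap. You correctly observe that the single relation $\ko_1\ko_2=\coth^2\rho+O(e^{-\tau\rho})$ cannot separate the two principal curvatures and that \eqref{e-h-derivative-condition} must supply the missing upper bound, but steps (a)--(b) of your plan --- transplanting the Shi--Wang--Wu analysis to the hyperbolic ambient space and invoking unspecified ``Pogorelov/Weyl-type a priori estimates near the umbilic round sphere'' to get $|A_0-\tfrac{H_0}{2}g_\rho|=O(e^{-\tau\rho})$ --- are a program, not a proof: the elliptic estimate you need is never stated in a citable form, and adapting \cite{ShiWangWu09} to $\H^3$ is itself nontrivial. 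The paper closes exactly this gap by a short, concrete device: condition \eqref{e-h-derivative-condition} (four derivatives of the metric) gives $\Delta_{g_\rho}S^\rho=O(e^{-(\tau+2)\rho})$, and the pointwise a priori bound of Li--Weinstein, in the form of \cite[Lemma 2.3]{KwongTam},
$$H_0^2\le \max_{\Sigma_\rho}\lf(\frac{2(S^\rho+2)^2-4(S^\rho+2)-\Delta_{g_\rho}S^\rho}{S^\rho}\ri)=4\coth^2\rho+O(e^{-\tau\rho}),$$
supplies the upper bound on $H_0$ matching the Gauss-equation lower bound; combined with $\ko_1\ko_2=\coth^2\rho+O(e^{-\tau\rho})$ this determines both principal curvatures. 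Unless you can actually produce the elliptic estimate you allude to, the $n=3$ half of your argument remains incomplete.
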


\begin{proof}
  Let $H_0$, $A_0$  be the mean curvature, the second fundamental form  of $\S_\rho \subset \H^n $
  with respect to $\nu_0$,  respectively.
  At a point $p$  in $ \Sigma_\rho$, let $ \{ e_i\}_{1\le i\le n-1}$ be an orthonormal frame in $ T_p \Sigma_\rho $
  that diagonalizes $ A_0$.   Let $R^{\rho}_{ijkl}$, $R^{\rho}_{ij}$ and ${S}^\rho$  be the intrinsic curvature tensor,
  the Ricci tensor and the scalar curvature of $(\Sigma_\rho, g_\rho)$.  By the Gauss equation and Lemma \ref{l-AH-3},
\be\label{e-AH-estimate-1}
\left\{
  \begin{array}{rl}
   \ko_i\ko_j&=R^{\rho}_{ijij}+1=\coth^2\rho+O(e^{-\tau\rho}), \ i \neq j \\
    R^{\rho}_{ii}&=(n-2)   (\sinh \rho)^{-2}   +O(e^{-\tau\rho}) \\
    {S}^\rho&=(n-1)(n-2)  (\sinh \rho)^{-2}  +O(e^{-\tau\rho})
  \end{array}
\right.
\ee
and
\bee
\begin{split}
H_0^2= & \  (n-1)(n-2) + {S}^\rho+|A_0|^2  \\
\ge & \  (n-1)(n-2)\coth^2\rho+\frac1{n-1}H_0^2+O(e^{-\tau\rho}).
\end{split}
\eee
In particular, $H_0\neq 0$ and hence $ H_0 > 0 $ for large $\rho$ (as $\Sigma_\rho$ is compact).
Therefore,
\be\label{e-AH-estimate-2}
H_0\ge (n-1)\coth\rho+O(e^{-\tau \rho}).
\ee

Next, we  consider the case $n\ge 4$ first. The proof is similar to that of Lemma \ref{lma-embedding-AF}.
 By the Gauss equation,
 \bee
 R^{\rho}_{ii}+(n-2)=H_0\ko_i -(\ko_i)^2 ,
 \eee
 which shows
$$
\ko_i=\frac12\lf[ H_0\pm \sqrt { H_0^2-4  (R^{\rho}_{ii}+(n-2) ) } \ri].
$$
In particular, $\ko_i { > }  0$.
 We claim
$$
\ko_i=\frac12\lf[ H_0  -  \sqrt { H_0^2-4  (R^{\rho}_{ii}+(n-2) ) } \ri].
$$
If not,   without loss of  generality, suppose
$$
\kappa_1^{(0)}=\frac12\lf[ H_0 +  \sqrt { H_0^2-4  (R^{\rho}_{ii}+(n-2) ) } \ri] .
$$
By \eqref{e-AH-estimate-1} and \eqref{e-AH-estimate-2},
\bee
\begin{split}
& \ H_0^2-4(R^{\rho}_{ii}+(n-2)) \\
\ge &\ (n-1)^2\coth^2\rho- 4(n-2)\coth^2\rho+O(e^{-\tau\rho})\\
 =& \ (n-3)^2\coth^2\rho+O(e^{-\tau\rho}) >0
\end{split}
\eee
for large $\rho$, as   $n\ge4$.
Hence $\ko_1>\frac 12H_0$.
This implies, for $i\ge 2$,
\be\label{e-AH-estimate-3}
\begin{split}
  \kappa_i^{(0)}=&\ \frac12\lf[ H_0  -  \sqrt { H_0^2-4  (R^{\rho}_{ii}+(n-2) ) } \ri] \\
=&\frac12\frac{ 4(R^{(\rho)}_{ii}+(n-2))}{H_0+  \sqrt{ H_0^2-4(R^{\rho}_{ii}+(n-2))}} \\
  \le &\coth\rho+O(e^{-\tau\rho}),
  \end{split}
  \ee
for   otherwise we would have $\ko_1+\ko_i>H_0$, for some $ i \neq 1$, which is impossible.

Now for $i\neq j$ both larger than $1$, by \eqref{e-AH-estimate-1}  and  \eqref{e-AH-estimate-3},
\bee
\begin{split}
 \kappa_i^{(0)}\lf(\coth\rho+O(e^{-\tau\rho} ) \ri)\ge& \kappa_i^{(0)}\kappa_j^{(0)}
= \coth^2\rho+O(e^{-\tau\rho}) .
\end{split}
\eee
  Hence,
  \bee
 \kappa_i^{(0)}\ge \coth\rho+O(e^{-\tau\rho}).
  \eee
Combining this with \eqref{e-AH-estimate-3},   we have for $i\ge 2$,
  \be\label{e-AH-estimate-4}
  \kappa_i^{(0)}=\coth\rho+O(e^{-\tau\rho}).
  \ee
But then it follows from \eqref{e-AH-estimate-1}, \eqref{e-AH-estimate-2} and \eqref{e-AH-estimate-4} that, for $ i \ge 2$,
  \bee
\begin{split}
 \coth^2\rho+O(e^{-\tau\rho})= & \ \kappa_i^{(0)}\kappa_1^{(0)}\\
\ge & \ \lf(\coth\rho+O(e^{-\tau\rho})\ri)\frac{H_0}2\\
=& \ \frac{(n-1)}2\coth^2\rho+O(e^{-\tau \rho}) ,
\end{split}
  \eee
 which  is impossible since $n-1\ge 3$. Hence, \eqref{e-AH-estimate-3} is valid  for all $i \ge 1$.
 Repeating the argument leading to \eqref{e-AH-estimate-4} for any $ i \neq j$,
 we conclude that \eqref{e-AH-estimate-4} holds  for  all $i \ge 1$. This  proves the case  $n\ge 4.$

When  $n=3$,
  using the  assumption \eqref{e-h-derivative-condition}  and the fact $  \Gamma^{ k}_{ij} = O (e^{-\rho})$,
 for all $ 1 \le i, j, k \le 2 $, one  checks that
\be
\Delta_{g_\rho} {S}^\rho=O(e^{- (\tau + 2) \rho}),
\ee
where $ \Delta_{g_\rho}$ is the Laplacian on $ (\Sigma_\rho, g_\rho)$.
Then by {\cite[Lemma 2.3]{KwongTam}}  (see also \cite{LW}), we have
\bee
\begin{split}
H_0^2\le & \  {  \max_{\Sigma_\rho} \lf( \frac{2 ( {S}^\rho + 2 )^2 - 4 ( {S}^\rho + 2)
- \Delta_{g_\rho} {S}^\rho}{ {S}^\rho} \ri) } \\
= & \ 4\coth^2\rho+O(e^{-\tau\rho}).
\end{split}
\eee
Hence,
$$
H_0\le 2 \coth\rho+O(e^{-\tau\rho}),
$$
which together with \eqref{e-AH-estimate-2} shows
\be \label{e-AH-estimate-2dH0}
H_0= 2 \coth\rho+O(e^{-\tau\rho}).
\ee
Combining \eqref{e-AH-estimate-2dH0} with  \eqref{e-AH-estimate-1}, we conclude
$$\ko_\alpha =\coth\rho+O(e^{-\tau\rho}), \ \forall \ \alpha = 1, 2 .$$
This completes the proof.
\end{proof}

On the ball  model $({B}^n, g_0) $ of $\H^n$,
the vector fields
$ X^{(i)} $, $ 0 \le i \le n $, defined by
$$ X^{(0)} = x^j \frac{\p}{\p x^j} , \ \
X^{(k)}=\frac{\p}{\p x^k}, \ k=1,\dots, n , $$
are conformal Killing vector fields. Let $ r = | x | $ and define
$$   \rho =  \ln\lf(\frac{1+r}{1-r}\ri) ,$$
then    $ g_0 = d \rho^2 + (\sinh \rho)^2 h_0 $.
In particular, this shows
\be
 X^{(0)} = \sinh \rho \frac{\p}{\p \rho} , \ \ V^{(k)} = ( \sinh \rho) r^{-1} x^k , \ k=1,\dots, n ,
\ee
Recall that $V^{(0)}$ and $ V^{(k)}$ are defined in \eqref{e-def-Vi}.

\begin{lma}\label{l-AH-4-5}
Let $ | \Sigma_\rho | $ denote the volume of $(\Sigma_\rho, g_\rho)$. As $ \rho \rightarrow \infty$,
\bee\label{e-X0-1-1}
\begin{split}
& \  \int_{\Sigma_\rho} G^g_{-1} (X^{(0)},\nu)d\sigma_\rho  \\
 = & \ \frac12 \lf( \frac{ | \Sigma_\rho |  }{ \omega_{n-1} } \ri)^\frac{1}{n-1}
 \int_{\Sigma_\rho}  \lf[ \frac{n-2}{n-1} H^2- {S}^\rho -  (n-1)(n-2) \ri]  d\sigma_\rho+  o (1) .
 \end{split}
 \eee
Here   $d\sigma_\rho$ is   the volume  element of $g_\rho$ on $\Sigma_\rho$.
\end{lma}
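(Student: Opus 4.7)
The plan is to rewrite the integrand $G^g_{-1}(X^{(0)},\nu)$ in three successive moves: trade $X^{(0)}$ for $\sinh\rho\cdot\nu$, apply the Gauss equation to $G^g_{-1}(\nu,\nu)$, and replace $\sinh\rho$ by $(|\Sigma_\rho|/\omega_{n-1})^{1/(n-1)}$, controlling each substitution by the asymptotic decay $\tau>n/2$ supplied by Definition \ref{d-AH}.

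For the first move, in the ball model $X^{(0)}=\sinh\rho\cdot\ve_0$, so $X^{(0)}-\sinh\rho\cdot\nu=\sinh\rho(\ve_0-\nu)$, and Lemma \ref{l-AH-2}(i) gives $|\ve_0-\nu|_{g_0}=O(e^{-\tau\rho})$. Since $\Ric(g_0)=-(n-1)g_0$ and $\mathcal{S}_{g_0}=-n(n-1)$, the background tensor $G^{g_0}_{-1}$ vanishes identically; combined with the $C^2$-decay of $g-g_0$, this yields the pointwise bound $|G^g_{-1}|_{g_0}=O(e^{-\tau\rho})$. Since $|\Sigma_\rho|=O(e^{(n-1)\rho})$, the discarded contribution is $O(e^{(n-2\tau)\rho})=o(1)$. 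For the second move, a direct contraction of the Gauss equation gives
\bee
G^g_{-1}(\nu,\nu)=\frac{1}{2}\left[H^2-|A|^2-S^\rho-(n-1)(n-2)\right],
\eee
and writing $|A|^2=H^2/(n-1)+|\ringA|^2$ produces the desired bracket $\tfrac{n-2}{n-1}H^2-S^\rho-(n-1)(n-2)$ plus a $\tfrac{1}{2}|\ringA|^2$ error. By Lemma \ref{l-AH-3}, each principal curvature equals $\coth\rho+O(e^{-\tau\rho})$, so the common leading term $\coth\rho$ cancels in the trace-free part and $|\ringA|^2=O(e^{-2\tau\rho})$; this contributes $O(e^{(n-2\tau)\rho})=o(1)$ after multiplying by $\sinh\rho$ and integrating.

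For the third move, the asymptotic $d\sigma_\rho=(1+O(e^{-\tau\rho}))(\sinh\rho)^{n-1}\,d\omega_0$, with $d\omega_0$ the standard measure on $\mathbb{S}^{n-1}$, integrates to $|\Sigma_\rho|=\omega_{n-1}(\sinh\rho)^{n-1}(1+O(e^{-\tau\rho}))$, so $(|\Sigma_\rho|/\omega_{n-1})^{1/(n-1)}=\sinh\rho\,(1+O(e^{-\tau\rho}))$. Using the Lemma \ref{l-AH-3} formulas for $H$ and $S^\rho$, along with the identity $\coth^2\rho-(\sinh\rho)^{-2}=1$, the bracket $\tfrac{n-2}{n-1}H^2-S^\rho-(n-1)(n-2)$ is itself $O(e^{-\tau\rho})$, so this final substitution costs another $O(e^{(n-2\tau)\rho})=o(1)$. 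The main technical hurdle I expect is the uniform estimate $|G^g_{-1}|_{g_0}=O(e^{-\tau\rho})$ in the first move: it requires assembling the second-derivative decay of $g-g_0$ into the \emph{tensor} combination $G^g_{-1}$ and using that the background value vanishes exactly, a cancellation without which the single hypothesis $\tau>n/2$ would not suffice for all three error terms to be $o(1)$ simultaneously.
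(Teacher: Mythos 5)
Your proposal is correct and follows essentially the same route as the paper: replace $X^{(0)}=\sinh\rho\,\tilde\nu$ by $\sinh\rho\,\nu$ using $|G^g_{-1}|_{g_0}=O(e^{-\tau\rho})$ (which the paper gets from its curvature comparison \eqref{e-d-R-tR}, exploiting exactly the vanishing of $G^{g_0}_{-1}$ you identify) together with Lemma \ref{l-AH-2}(i), then apply the Gauss equation with $|A|^2=\frac{1}{n-1}H^2+O(e^{-2\tau\rho})$ from Lemma \ref{l-AH-3}, and finally trade $\sinh\rho$ for $(|\Sigma_\rho|/\omega_{n-1})^{1/(n-1)}$ via the volume asymptotics. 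Your bookkeeping of the three $O(e^{(n-2\tau)\rho})$ error terms under the hypothesis $\tau>n/2$ matches the paper's.
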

\begin{proof}
Denote $G^g_{-1} $ by $G$.
As $(M^n, g)$ is asymptotically hyperbolic,
\be \label{eq-volume-S}
 | \Sigma_\rho |
= ( \sinh \rho )^{n-1} \omega_{n-1} ( 1 + O ( e^{- \tau \rho} ) )  .
\ee
Using  \eqref{e-d-R-tR}, Lemma  \ref{l-AH-2}  and the fact $\tau>\frac n2$,  as $\rho\to\infty$, we have
\be\label{e-X0-1-2}
\begin{split}
 \int_{\Sigma_\rho} G(X^{(0)},\nu)d\sigma_\rho
 = & \  \int_{\Sigma_\rho}\sinh\rho\  G(\wt \nu,\nu)d\sigma_\rho\\
 =& \ \int_{\Sigma_\rho}\sinh\rho\  G( \nu,\nu) d\sigma_\rho+o(1) .
 \end{split}
 \ee
By the Gauss equation,
\be \label{eq-gauss-S-rho}
\begin{split}
G (\nu, \nu)  = & \  \frac12 \lf[ H^2 - |A|^2 - {S}^\rho -  (n-1)(n-2) \ri] \\
 = & \  \frac12 \lf[ \frac{n-2}{n-1} H^2-  {S}^\rho -  (n-1)(n-2) \ri]  + O (e^{- 2 \tau \rho})   ,
\end{split}
\ee
where we also have used the fact
\bee
| A|^2 =\frac{1}{n-1} H^2 + O (e^{- 2 \tau \rho})
\eee
which follows from Lemma \ref{l-AH-3}.
Therefore,
\be\label{e-X0-1-3}
\begin{split}
& \  \int_{\Sigma_\rho} G(X^{(0)},\nu)d\sigma_\rho \\
=& \frac12 \sinh\rho  \int_{\Sigma_\rho}  \lf[ \frac{n-2}{n-1} H^2- {S}^\rho -  (n-1)(n-2) \ri]  d\sigma_\rho+o(1).
 \end{split}
 \ee
The lemma now  follows from \eqref{eq-volume-S} and \eqref{e-X0-1-3}.
%\bee
%\begin{split}
%& \  \int_{S_\rho} G(X^{(0)},\nu)d\sigma_\rho \\
%= & \ \frac12 \lf( \frac{ | S_\rho |  }{ \omega_{n-1} } \ri)^\frac{1}{n-1}
%\lf( 1 +  O (e^{ - \tau \rho } ) \ri)
% \int_{S_\rho}  \lf[ \frac{n-2}{n-1} |H|^2- \mathcal{S}^\rho -  (n-1)(n-2) \ri]  d\sigma_\rho+  o (1) \\
% = & \ \frac12 \lf( \frac{ | S_\rho |  }{ \omega_{n-1} } \ri)^\frac{1}{n-1}
% \int_{S_\rho}  \lf[ \frac{n-2}{n-1} |H|^2- \mathcal{S}^\rho -  (n-1)(n-2) \ri]  d\sigma_\rho+  o (1) .
% \end{split}
% \eee
\end{proof}

\begin{lma}\label{l-AH-5}
For each large $\rho$, suppose $ \iota_\rho: (\Sigma_\rho,g_\rho) \rightarrow  \H^n $ is an isometric embedding
  such  that its   principal curvatures $\{ \ko_i \}_{1\le i \le n-1}$ satisfy
$$
\ko_i=\coth\rho+O(e^{-\tau\rho}) ,
$$
 as $\rho\to\infty$. Then
\bee
\int_{\Sigma_\rho} (H_0-H)V^{(0)} d\sigma_\rho=-\frac1{n-2}\int_{\Sigma_\rho}G^g_{-1} (X^{(0)},\nu)d\sigma_\rho+o(1).
\eee
Here   $H_0$ is the mean curvature of the embedding $\iota_\rho$.

\end{lma}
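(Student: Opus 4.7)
The plan is to derive a pointwise identity comparing $H_0 - H$ with $G^g_{-1}(\nu,\nu)$, then integrate against $V^{(0)} = \cosh\rho$ (which is constant on $\Sigma_\rho$), and finally convert the resulting integrand to $G^g_{-1}(X^{(0)}, \nu)$ using $X^{(0)} = \sinh\rho\, \p_\rho = \sinh\rho\, \wt\nu$ together with Lemma \ref{l-AH-2}(i).

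First I would apply the Gauss equation twice in parallel. For $\Sigma_\rho$ inside $(M^n,g)$, the calculation from the proof of Lemma \ref{l-AH-4-5} gives
\[ H^2 - |A|^2 = S^\rho + (n-1)(n-2) + 2G^g_{-1}(\nu,\nu). \]
For the isometric embedding $\iota_\rho(\Sigma_\rho)$ inside $\H^n$, since $G^{g_0}_{-1}$ vanishes identically on $\H^n$, the same Gauss computation yields
\[ H_0^2 - |A_0|^2 = S^\rho + (n-1)(n-2). \]
Subtracting, $H_0^2 - H^2 = (|A_0|^2 - |A|^2) - 2 G^g_{-1}(\nu,\nu)$. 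By Lemma \ref{l-AH-3} and the hypothesis on $\{\ko_i\}$, both $A$ and $A_0$ are nearly umbilical with traceless parts of squared norm $O(e^{-2\tau\rho})$; hence $|A|^2 - \frac{1}{n-1}H^2$ and $|A_0|^2 - \frac{1}{n-1}H_0^2$ are both $O(e^{-2\tau\rho})$, and substituting gives the pointwise identity
\[ H_0^2 - H^2 = -\frac{2(n-1)}{n-2}\, G^g_{-1}(\nu,\nu) + O(e^{-2\tau\rho}). \]

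Next I would divide by $H_0 + H = 2(n-1)\coth\rho + O(e^{-\tau\rho})$. Since $G^{g_0}_{-1}$ vanishes on $\H^n$, the AH decay together with \eqref{e-d-R-tR} forces $|G^g_{-1}|_{g_0} = O(e^{-\tau\rho})$, so the error from inverting the denominator is absorbed and
\[ H_0 - H = -\frac{1}{(n-2)\coth\rho}\, G^g_{-1}(\nu,\nu) + O(e^{-2\tau\rho}). \]
Multiplying by $V^{(0)} = \cosh\rho$ and using $\cosh\rho/\coth\rho = \sinh\rho$,
\[ (H_0 - H) V^{(0)} = -\frac{\sinh\rho}{n-2}\, G^g_{-1}(\nu,\nu) + O\bigl(e^{(1-2\tau)\rho}\bigr). \]

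Finally I would integrate over $\Sigma_\rho$, whose volume is $O(e^{(n-1)\rho})$; the pointwise remainder integrates to $O(e^{(n-2\tau)\rho}) = o(1)$ under the AH decay $\tau > n/2$. On the right side of the claim, using Lemma \ref{l-AH-2}(i) and $|G^g_{-1}|_{g_0} = O(e^{-\tau\rho})$,
\[ G^g_{-1}(X^{(0)},\nu) = \sinh\rho\, G^g_{-1}(\wt\nu,\nu) = \sinh\rho\, G^g_{-1}(\nu,\nu) + O\bigl(e^{(1-2\tau)\rho}\bigr), \]
whose integral differs from $\sinh\rho \int_{\Sigma_\rho} G^g_{-1}(\nu,\nu)\, d\sigma_\rho$ by $o(1)$ for the same reason. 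Comparing the two integrated identities produces the asserted relation. The main obstacle is the error bookkeeping across the three substitutions (clearing the $|A|$-terms, inverting $H_0 + H$, and swapping $\wt\nu$ for $\nu$); this all reduces to the fact that $\tau > n/2$ exactly suppresses the worst contribution of the form $|\Sigma_\rho| \cdot e^{-2\tau\rho} \cdot e^{\rho}$.
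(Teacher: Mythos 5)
Your argument is correct and follows essentially the same route as the paper: both proofs combine the Gauss equation in $(M^n,g)$ with the Gauss equation for $\iota_\rho(\Sigma_\rho)\subset\H^n$ (where $G^{g_0}_{-1}\equiv 0$), use the asymptotics $\kappa_i,\ko_i=\coth\rho+O(e^{-\tau\rho})$ to linearize the resulting quadratic difference, and pass from $\sinh\rho\,G^g_{-1}(\nu,\nu)$ to $G^g_{-1}(X^{(0)},\nu)$ via $X^{(0)}=\sinh\rho\,\wt\nu$ and $|G^g_{-1}|_{g_0}=O(e^{-\tau\rho})$. The only cosmetic difference is that you linearize by dividing $H_0^2-H^2$ by $H_0+H$ after discarding the traceless parts of $A$ and $A_0$, whereas the paper factors $\kappa_i\kappa_j-\ko_i\ko_j$ termwise around $\coth\rho$; the error bookkeeping is identical in both versions.
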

\begin{proof}
Denote $G^g_{-1} $ by $G$.
Apply the Gauss equation to $\iota_\rho (\Sigma_\rho) \subset \H^n$, we have
\be \label{eq-gauss-S-rho-H}
\begin{split}
0 =  \sum_{i<j}( \ko_i \ko_j) - {S}^\rho -  (n-1)(n-2)  .
\end{split}
\ee
Therefore, by \eqref{e-X0-1-2}, \eqref{eq-gauss-S-rho} and \eqref{eq-gauss-S-rho-H},
\be\label{e-X0-1}
\begin{split}
 \int_{\Sigma_\rho} G(X^{(0)},\nu)d\sigma_\rho
 =& \  \int_{\Sigma_\rho}\sinh\rho\  G( \nu,\nu) d\sigma_\rho+o(1)\\
 =& \ \int_{\Sigma_\rho}\sinh\rho\sum_{i<j}( \k_i\k_j-\ko_i \ko_j)d\sigma_\rho+o(1) .
 \end{split}
 \ee
Now by the assumption  on $\ko_i$ and by Lemma \ref{l-AH-3},
\bee
 \begin{split}
 \kappa_i\kappa_j- \kappa_i^{(0)}\kappa_j^{(0)}=&\lf(\kappa_i-\kappa_i^{(0)}\ri)\kappa_j + \lf(\kappa_j-\kappa_j^{(0)}\ri)\kappa_i^{(0)}\\
 =&\lf(\kappa_i-\kappa_i^{(0)}\ri)\lf(\kappa_j-\coth\rho\ri) + \lf(\kappa_j-\kappa_j^{(0)}\ri)\lf(\kappa_i^{(0)}-\coth\rho\ri)\\
 &+ \coth\rho\lf(\kappa_i+\kappa_j-\ko_i-\ko_j\ri)\\
 =& \coth\rho\lf(\kappa_i+\kappa_j-\ko_i-\ko_j\ri)+O(e^{-2\tau\rho}) .
 \end{split}
 \eee
Hence,
$$
\sum_{i<j}( \k_i\k_j-\ko_i\ko_j)=(n-2)\coth\rho\,(H-H_0)+O(e^{-2\tau\rho}).
$$
Combining this with \eqref{e-X0-1}, we have
\bee
\begin{split}
\int_{\Sigma_\rho} G(X^{(0)},\nu)d\sigma_\rho=&-(n-2)\int_{\Sigma_\rho}\cosh\rho\, (H_0-H)d\sigma_\rho+o(1).
\end{split}
\eee
which proves the lemma.
\end{proof}

\begin{lma}\label{l-AH-6}
 With the same assumptions and notation as in Lemma \ref{l-AH-5}, suppose in addition $\tau>n-1$, then
\bee
\int_{\Sigma_\rho} (H_0-H)V^{(k)} d\sigma_\rho=-\frac1{n-2}\int_{\Sigma_\rho}G^g_{-1} (X^{(k)},\nu)d\sigma_\rho+o(1)
\eee
as $\rho\to\infty$,  $1\le k \le n$.
\end{lma}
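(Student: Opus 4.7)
I would adapt the scheme of Lemma \ref{l-AH-5}, accounting for the fact that $X^{(k)}$ (for $k\ge 1$) has a genuinely tangential component with respect to the foliation $\{\Sigma_\rho\}$. A direct calculation in the ball model (using $x^k = r\theta^k$ with $r=\tanh(\rho/2)$ and $\p_r = (1+\cosh\rho)\p_\rho$) gives the $g_0$-orthogonal decomposition
\begin{equation*}
X^{(k)} = (1+\cosh\rho)\theta^k\,\p_\rho + T^{(k)},\qquad T^{(k)} := (1+\cosh\rho)\sum_{\alpha=1}^{n-1} f_\alpha(\theta^k)\,\ve_\alpha,
\end{equation*}
where $T^{(k)}$ is tangent to $\Sigma_\rho$ and, as a vector field on $(\Sigma_\rho,g_\rho^0)$, equals $(1+\cosh\rho)\sinh\rho\,\nabla^{g_\rho^0}\theta^k$. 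Abbreviating $G:=G^g_{-1}$, I split $\int_{\Sigma_\rho} G(X^{(k)},\nu)\,d\sigma_\rho$ into the normal and tangential contributions.

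For the normal contribution, the argument of Lemma \ref{l-AH-5} adapts. Since $G^{g_0}_{-1}=0$ one has $|G|_{g_0}=O(e^{-\tau\rho})$, which combined with $|\nu-\p_\rho|_{g_0}=O(e^{-\tau\rho})$ (Lemma \ref{l-AH-2}) gives $G(\p_\rho,\nu)=G(\nu,\nu)+O(e^{-2\tau\rho})$; then the Gauss equation comparison yields $G(\nu,\nu)=(n-2)\coth\rho\,(H-H_0)+O(e^{-2\tau\rho})$. Since $(1+\cosh\rho)\coth\rho - \sinh\rho = (1+\cosh\rho)/\sinh\rho = O(1)$ and $H-H_0=O(e^{-\tau\rho})$, the discrepancy between $(1+\cosh\rho)\coth\rho$ and $\sinh\rho$ contributes only $O(e^{-\tau\rho})$ pointwise, hence $O(e^{(n-1-\tau)\rho})=o(1)$ after integration under $\tau>n-1$. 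Thus
\begin{equation*}
\int_{\Sigma_\rho}(1+\cosh\rho)\theta^k\,G(\p_\rho,\nu)\,d\sigma_\rho = -(n-2)\int_{\Sigma_\rho}(H_0-H)V^{(k)}\,d\sigma_\rho + o(1).
\end{equation*}

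The main obstacle is the tangential contribution $\int_{\Sigma_\rho}G(T^{(k)},\nu)\,d\sigma_\rho$, since its crude pointwise bound $O(e^{(1-\tau)\rho})$ integrates only to $O(e^{(n-\tau)\rho})$, which is insufficient under $\tau>n-1$. I would extract the needed cancellation as follows. Since $T^{(k)}$ is tangent to $\Sigma_\rho$ as a submanifold, $g(T^{(k)},\nu)=0$ and hence $G(T^{(k)},\nu)=\Ric_g(T^{(k)},\nu)$ identically. The contracted Codazzi identity $\Ric_g(Y,\nu)=(\div_\Sigma A)(Y)-Y(H)$ for tangent $Y$, integrated over the closed $\Sigma_\rho$ and integrated by parts, becomes
\begin{equation*}
\int_{\Sigma_\rho}\Ric_g(Y,\nu)\,d\sigma_\rho = -\int_{\Sigma_\rho}\langle A,\nabla^\Sigma Y\rangle\,d\sigma_\rho + \int_{\Sigma_\rho} H\,\div_\Sigma Y\,d\sigma_\rho.
\end{equation*}
Applying this with $Y=T^{(k)}$ and decomposing $A = \tfrac{H}{n-1}g_\rho + \ringA$ (where $|\ringA|_{g_0}=O(e^{-\tau\rho})$ by Lemma \ref{l-AH-2}(iii)), and noting that $\theta^k$ is a first spherical harmonic so $|\nabla^\Sigma T^{(k)}|_{g_\rho}$ and $|\div_\Sigma T^{(k)}|$ are bounded pointwise, the divergence theorem makes the leading piece $(n-1)\coth\rho\int \div_\Sigma T^{(k)}\,d\sigma_\rho$ vanish; every remaining piece is pointwise $O(e^{-\tau\rho})$ (using also $H-(n-1)\coth\rho = O(e^{-\tau\rho})$ from Lemma \ref{l-AH-3}) and integrates to $O(e^{(n-1-\tau)\rho}) = o(1)$ under $\tau>n-1$.

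Combining the normal and tangential contributions gives $\int_{\Sigma_\rho} G(X^{(k)},\nu)\,d\sigma_\rho = -(n-2)\int_{\Sigma_\rho}(H_0-H)V^{(k)}\,d\sigma_\rho + o(1)$, from which the lemma follows.
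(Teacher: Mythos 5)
Your proposal is correct, and the treatment of the radial part coincides with the paper's: both reduce $\int_{\Sigma_\rho}\frac{x^k}{r^2}G(X^{(0)},\nu)\,d\sigma_\rho$ to $-(n-2)\int_{\Sigma_\rho}(H_0-H)V^{(k)}d\sigma_\rho$ via Lemma \ref{l-AH-5}, absorbing the $O(1)$ discrepancy between $(1+\cosh\rho)\coth\rho$ and $\sinh\rho$ against $H-H_0=O(e^{-\tau\rho})$ under $\tau>n-1$. Where you genuinely diverge is the tangential piece. The paper follows Herzlich: it interpolates between $g_0$ and $g$ on an annulus $A_{\rho_0}$, uses that $G^{\tilde g}_{-1}$ is divergence-free to convert the boundary integral $\int_{S_{\rho_0}}G(Z,\nu)\,d\sigma$ into $\int_{A_{\rho_0}}\langle G^{\tilde g}_{-1},(\tilde\beta)^s\rangle\,dV_{\tilde g}$, and then wins from $|G^{\tilde g}_{-1}|=O(e^{-\tau\rho_0})$ together with the boundedness of the symmetrized derivative $(\tilde\beta)^s$ of the $1$-form dual to $Z$ (a computation showing the large conformal-Killing part cancels). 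You instead stay intrinsic to the hypersurface: $g(T^{(k)},\nu)=0$ kills the scalar-curvature term, the contracted Codazzi identity plus integration by parts over the closed $\Sigma_\rho$ expresses $\int\Ric_g(T^{(k)},\nu)$ through $\ringA$ and $H\,\div_\Sigma T^{(k)}$, and you win from near-umbilicity $|\ringA|=O(e^{-\tau\rho})$, $H-(n-1)\coth\rho=O(e^{-\tau\rho})$, and $\int_{\Sigma_\rho}\div_\Sigma T^{(k)}\,d\sigma_\rho=0$; all the stated pointwise bounds ($|\div_\Sigma T^{(k)}|,|\nabla^\Sigma T^{(k)}|=O(1)$, using that $\theta^k$ is a first eigenfunction and the Christoffel comparison of Remark \ref{rmk-grho}) check out, and both routes hit the same threshold $O(e^{(n-1-\tau)\rho})=o(1)$ for $\tau>n-1$. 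Your argument is more elementary and self-contained (no auxiliary metric, no Bianchi identity), but it leans on the specific geometry of the geodesic spheres (near-umbilicity and $\int\div=0$), whereas the paper's divergence-trick is the more robust general mechanism — the same one underlying the well-definedness of the mass functional — and would survive for less special exhausting hypersurfaces.
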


\begin{proof}

We still denote $ G^g_{-1} $ by  $ G$. For a fixed $k$,  denote $X^{(k)}$ by $Y$.
Decompose  $ Y=Y_1+Z $,
where $Y_1= {r^{-2}} x^k X^{(0)}$ and  $Z$ is normal to $X^{(0)}$ with respect to $g_0$.
By the proof of Lemma \ref{l-AH-5}, we have

\be\label{e-Xp-1}
\begin{split}
 \int_{\Sigma_\rho} G (Y_1,\nu)d\sigma_\rho
=& \int_{\Sigma_\rho}\frac{x^k}{r^2}  G (X^{(0)},\nu)d\sigma_\rho\\
=&-(n-2)\int_{\Sigma_\rho}\frac{\theta^k}r\cosh\rho\, (H_0-H)d\sigma_\rho+o(1)\\
=&-(n-2)\int_{\Sigma_\rho} \frac{1+r^2}{2r^2} \theta^k \sinh\rho\, (H_0-H)d\sigma_\rho+o(1)\\
=&-(n-2)\int_{\Sigma_\rho} V^{(k)} (H_0-H)d\sigma_\rho+o(1)
 \end{split}
\ee
where we have used the assumption  $\tau>n-1$ and the fact
$$
 \lf(\frac{1+r^2}{2r^2}-1 \ri) \sinh\rho\, (H_0-H) =   \frac2 r  (H_0-H)=O(e^{-\tau\rho}),
$$
as $ \rho \rightarrow \infty$.

To estimate $ \int_{\Sigma_\rho} G(Z, \nu) d \sigma_\rho$, we adopt an argument in \cite{Herzlich}.
Let $\phi\ge0$ be a fixed smooth function on $[0,\infty)$
 such that $\phi(s)=0$ for $s\le \frac12$, $\phi(s)=1$ for $s\ge \frac 34$ and $0\le \phi\le 1$.
 For any  fixed  $\rho_0>0$ that is sufficiently large,   define
$$
  \wt g  = \lf[ 1-\phi(\frac{\rho}{\rho_0}) \ri] g_0+\phi(\frac{\rho}{\rho_0})   g =g_0+\phi ( \frac{\rho}{\rho_0}) (g-g_0),
$$
which is a metric on $ B^n$ that equals  $g$ outside $ \Omega_{\frac34 \rho_0}$ and agrees with
$ g_0$ inside $ \Omega_{\frac12 \rho_0} $. Here $ \Omega_\rho$ denotes the geodesic ball of radius $\rho$ in $(B^n, g_0)$ centered at the origin.
Let $\wt g_{ij}= \wt g(\ve_i,\ve_j)$, then
\bee \label{e-wtg-decay}
| \wt g_{ij}-\delta_{ij}|+|\ve_k( \wt g_{ij} )|+|\ve_k(\ve_l( \wt g_{ij}))|\le C_1e^{-\tau\rho_0}
\eee
on the annulus $ A_{ \rho_0} =  \Omega_{ \rho_0 }  \setminus \Omega_{ \frac14  \rho_0 }  $
for some constant $C_1$ that is  independent on $\rho_0$.
Hence,
\be \label{e-wtgG}
| G^{\wt g}_{-1}  |_{\wt g} \le C_2 e^{- \tau \rho_0}
\ee
on $A_{\rho_0}$ for some constant $ C_2$ independent on $\rho_0$.

Now let $ \wt \beta $ be the $1$-form dual to $ Z$ with respect to $ \wt g$.
Let $  (\wt \beta)^s  $ be  the symmetric  $(0,2)$  tensor given by
$ ( \wt \beta)^s_{ij} = \frac12 ( {\wt \beta}_{i;j} + {\wt \beta}_{j;i} ) $
where `` $;$ " denotes the covariant differentiation on $({B}^n, \wt g)$.
Integrating by parts and using the fact $G^{\wt g}_{-1} = 0 $ at $ \p \Omega_{\frac14 \rho_0} = S_{ \frac{1}{4} \rho_0}$, 
we have
\be \label{e-H-trick}
\int_{S_{\rho_0}} G (Z,\nu) d \sigma_{\rho_0}=
\int_{ A_{\rho_0} } \la G^{\wt g}_{-1},  ( \wt \beta)^s\ra_{\wt g}  dV_{\wt g} ,
\ee
where  $  d V_{\wt g}$ is the volume element of $\wt g$.

We next estimate $ | ( \wt \beta)^s |_{\wt g} $.
Note that if we write  $ Z = Z^i \ve_i $,   then
\be \label{e-Zi}
 | Z^i | + | \ve^j (Z^i) | \le C_3 e^{ \rho}
 \ee
for some constant $C_3$ independent on large $\rho$.
Let $\beta$ be the $1$-form dual to $Z$ with respect to $g_0$ and
let  `` $ { }_| $ " denote the covariant differentiation on $(B^n,  g_0)$.
On $ A_{\rho_0}$,  we have
\be\label{e-Xp-3}
\begin{split}
| \wt \beta_{i;j}-\beta_{i | j}|\le & \ |  \wt \beta_{i ; j}- \wt \beta_{i | j} |+|( \wt \beta -\beta)_{i | j}|\\
\le & \ C_4 e^{(-\tau+1)\rho_0}
\end{split}
\ee
for some constant $C_4$ independent on $\rho_0$,
where we have used  \eqref{e-Zi} and estimates analogues to
\eqref{e-gamma-tgamma} and   \eqref{e-d-gamma-tgamma}  with $ g$ replaced by $ \wt g$.
Let $ \beta^s $ be the symmetric $(0,2)$ tensor defined by
$ \beta^s_{ij} = \frac12 ( \beta_{i | j} + \beta_{j | i} ) $,
then direct  calculation gives 
\be\label{e-Xp-2}
\begin{split}
\beta^s_{ij}
= & \ \theta^k  \sinh\rho \, \delta_{ij}-\frac12\lf(\ve_i(\frac{x^k}{r^2})\delta_{j0}+
\ve_j(\frac{x^k}{r^2})\delta_{i0}\ri) -\frac{x^k}{r^2}\cosh\rho \, \delta_{ij}\\
=& \ \theta^k \lf(\sinh\rho-\frac1r\cosh\rho\ri)\delta_{ij}-\frac12\lf(\ve_i(\frac{x^k}{r^2})\delta_{j0}+
\ve_j(\frac{x^k}{r^2})\delta_{i0}\ri)\\
=& \ -\frac{\theta^k }{r}\delta_{ij}-\frac12\lf(\ve_i(\frac{x^k}{r^2})\delta_{j0}+
\ve_j(\frac{x^k}{r^2})\delta_{i0}\ri) .
\end{split}
\ee
Therefore,  by \eqref{e-Xp-3} and \eqref{e-Xp-2}, we conclude
\be \label{e-wtb}
| ({\wt \beta})^s |_{\wt g}  \le C_4
\ee
on $ A_{\rho_0}$  for some constant $C_4$ independent on $\rho_0$.
 It follows from \eqref{e-wtgG}, \eqref{e-H-trick} and \eqref{e-wtb} that
 $$
 \int_{S_{\rho_0}} G (Z,\nu) d \sigma_{\rho_0}= o(1)
 $$
 as $ \rho_0 \rightarrow \infty$ because $ \tau > n-1$.
This together with \eqref{e-Xp-1}  completes the proof.
\end{proof}

\begin{lma}\label{l-AH-4-6}
Suppose $\tau>n-1$. Then as $ \rho \rightarrow \infty$, for $ 1 \le k \le n $,
\bee\label{e-Xi-1-1}
\begin{split}
& \  \int_{\Sigma_\rho} G^g_{-1} (X^{(k)},\nu)d\sigma_\rho  \\
 = & \ \frac12 \lf( \frac{ | \Sigma_\rho |  }{ \omega_{n-1} } \ri)^\frac{1}{n-1}
 \int_{\Sigma_\rho}\frac{x^k}{|x|}  \lf[ \frac{n-2}{n-1} H^2- {S}^\rho -  (n-1)(n-2) \ri]  d\sigma_\rho+  o (1) .
 \end{split}
 \eee
 Here $\{ x^k \}$ are the coordinate functions in the ball model $(B^n, g_0)$ of $\H^n$.
\end{lma}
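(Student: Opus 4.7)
\emph{Proof proposal.} The plan is to combine the approach used for $X^{(0)}$ in Lemma \ref{l-AH-4-5} with the orthogonal splitting exploited in Lemma \ref{l-AH-6}. Writing $X^{(k)} = Y_1 + Z$ with $Y_1 = (x^k/r^2)\, X^{(0)}$ and $Z$ the $g_0$-orthogonal complement of $X^{(0)}$, I would split
\begin{equation*}
\int_{\Sigma_\rho} G^g_{-1}(X^{(k)},\nu)\, d\sigma_\rho = \int_{\Sigma_\rho} G^g_{-1}(Y_1,\nu)\, d\sigma_\rho + \int_{\Sigma_\rho} G^g_{-1}(Z,\nu)\, d\sigma_\rho
\end{equation*}
and analyse the two pieces separately.

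For the $Y_1$ piece, since $X^{(0)} = \sinh\rho\,\tilde\nu$, Lemma \ref{l-AH-2} combined with the bound $|G^g_{-1}|_{g_0} = O(e^{-\tau\rho})$ (a consequence of \eqref{e-d-R-tR}) lets me replace $\tilde\nu$ by $\nu$ with error $O(e^{(n-2\tau)\rho}) = o(1)$, producing
\begin{equation*}
\int_{\Sigma_\rho} G^g_{-1}(Y_1,\nu)\, d\sigma_\rho = \sinh\rho \int_{\Sigma_\rho} \frac{x^k}{r^2}\, G^g_{-1}(\nu,\nu)\, d\sigma_\rho + o(1).
\end{equation*}
Then I would invoke the Gauss identity \eqref{eq-gauss-S-rho} to rewrite $G^g_{-1}(\nu,\nu)$ as $\tfrac12\bigl[\tfrac{n-2}{n-1}H^2 - S^\rho - (n-1)(n-2)\bigr]$ modulo $O(e^{-2\tau\rho})$. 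The next step is to replace $1/r^2$ by $1/r$: since $\rho = \ln\bigl(\tfrac{1+r}{1-r}\bigr)$ gives $\tfrac{1}{r} - 1 = O(e^{-\rho})$ on $\Sigma_\rho$, and since Lemma \ref{l-AH-3} (together with the hyperbolic background values) forces the bracketed Gauss quantity to be $O(e^{-\tau\rho})$, the replacement costs at most $O(e^{-\rho})\cdot\sinh\rho \cdot O(e^{(n-1-\tau)\rho}) = O(e^{(n-1-\tau)\rho})$. Finally, the volume asymptotic $(|\Sigma_\rho|/\omega_{n-1})^{1/(n-1)} = \sinh\rho\,(1 + O(e^{-\tau\rho}))$ converts $\sinh\rho$ into the intended prefactor.

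For the $Z$ piece, the Herzlich-style cutoff argument from the proof of Lemma \ref{l-AH-6} applies without modification: introduce the interpolated metric $\tilde g = g_0 + \phi(\rho/\rho_0)(g-g_0)$, use that $G^{\tilde g}_{-1} = 0$ on $\Omega_{\rho_0/2}$, integrate by parts against the symmetrized covariant derivative $(\tilde\beta)^s$ of the $\tilde g$-dual of $Z$, and combine the bounds $|G^{\tilde g}_{-1}|_{\tilde g} = O(e^{-\tau\rho_0})$ with $|(\tilde\beta)^s|_{\tilde g} = O(1)$ on the annulus $A_{\rho_0}$. This step is purely about the ambient geometry, so no isometric embedding assumption enters. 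The main obstacle is the decay bookkeeping in the $Y_1$ piece: both the $1/r^2 \to 1/r$ replacement and the $Z$-integral estimate yield errors of order $e^{(n-1-\tau)\rho}$, and it is precisely the hypothesis $\tau > n-1$ that makes every such error $o(1)$, explaining why this is the threshold assumed in the statement.
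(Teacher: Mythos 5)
Your proposal is correct and follows essentially the same route as the paper: the paper likewise decomposes $X^{(k)} = \frac{x^k}{r^2}X^{(0)} + Z$, disposes of the $Z$-integral by citing the Herzlich-style cutoff estimate from the proof of Lemma \ref{l-AH-6} (which, as you correctly note, does not use the embedding hypothesis), rewrites $G^g_{-1}(\nu,\nu)$ via \eqref{eq-gauss-S-rho}, and performs the $1/r^2 \to 1/r$ replacement using $\frac{1}{r^2}-\frac{1}{r}=O(e^{-\rho})$ together with the $O(e^{-\tau\rho})$ decay of the Gauss bracket and $\tau>n-1$. Your error bookkeeping matches the paper's.
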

\begin{proof}
Since $ \tau > n-1$,
by the proof of Lemma \ref{l-AH-6}, we have
\be\label{e-Hawking-1}
\int_{\S_\rho}G(X^{(k)},\nu)d\sigma_\rho=  \int_{\S_\rho}\frac {x^k}{r^2}G(X^{(0)},\nu)d\sigma_\rho + o(1)
\ee
as $ \rho \rightarrow \infty$.
On the other hand, by  \eqref{eq-gauss-S-rho},
\bee \label{e-Hawking-2}
\begin{split}
& \  \int_{\S_\rho}\frac {x^k}{r^2}G(X^{(0)},\nu)d\sigma_\rho \\
= & \ \sinh \rho  \int_{\S_\rho}\frac {x^k}{r^2}G(\nu,\nu)d\sigma_\rho  + o(1) \\
=& \ \frac12\sinh\rho\int_{\S_\rho}  \frac {x^k}{r^2} \lf[ \frac{n-2}{n-1} H^2-  {S}^\rho -  (n-1)(n-2) \ri]  d\sigma_\rho+  o (1).
\end{split}
\eee
Since
$ \frac1{r^2}-\frac1r = O(e^{-\rho})$,
$$
  \frac{n-2}{n-1} H^2-  {S}^\rho -  (n-1)(n-2)=O(e^{-\tau\rho}) ,
  $$
  and $\tau>n-1$,
    the result follows from  \eqref{eq-volume-S} and \eqref{e-Hawking-1}.
\end{proof}

In the next Lemma, we normalize the embedding $\iota_\rho$ used in Lemma \ref{l-AH-5} and \ref{l-AH-6}
so that the corresponding position vector in $ \R^{n,1}$ can be compared to the vector
$ (V^{(0)}, V^{(1)}, \ldots, V^{(n)} ) $
defined at points in $\Sigma_\rho$.

\begin{lma}\label{l-AH-7}
With the same assumptions and notation as in Lemma \ref{l-AH-5}, with $\tau>n-1$,
 there exists  an isometry $\Lambda_\rho$ of $\H^n\subset \R^{n,1}$ such that the position vector $\Lambda_\rho\circ\iota_\rho(x)$ satisfies
$$|(\Lambda_\rho\circ\iota_\rho)^{(i)}(x)-V^{(i)}(x) |=O(e^{(-\tau+3)\rho})$$ for $0\le i\le n$ and for all $x\in \Sigma_\rho$.
Here $ (\Lambda_\rho\circ\iota_\rho)^{(i)}(x) $ is the $i$-th component of $ (\Lambda_\rho\circ\iota_\rho) (x)$
in $ \R^{n,1}$.

\end{lma}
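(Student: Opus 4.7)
The plan is to exploit the fact that $\mathbf{V}(x):=(V^{(0)}(x),\ldots,V^{(n)}(x))$ is precisely the position vector, in the hyperboloid model of $\H^n\subset\R^{n,1}$, of the standard isometric embedding of the geodesic sphere of radius $\rho$ centered at the origin of the ball model. Writing $\mathbf{Y}(x):=\iota_\rho(x)$, we seek $\Lambda_\rho\in O(n,1)$ preserving $\H^n$ such that $\Lambda_\rho\mathbf{Y}$ and $\mathbf{V}$ agree pointwise up to $O(e^{(-\tau+3)\rho})$. Using the parametrization of $\Sigma_\rho$ by $\mathbb{S}^{n-1}$ from Definition \ref{d-AH}, both maps become $\R^{n,1}$-valued functions on the same domain. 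Fix a base point $x_0\in\Sigma_\rho$ and choose $\Lambda_\rho$ so that $\Lambda_\rho\mathbf{Y}(x_0)=\mathbf{V}(x_0)$, $\Lambda_\rho\nu_0^{\mathbf{Y}}(x_0)=\nu_0^{\mathbf{V}}(x_0)$, and an aligned $g_\rho$-orthonormal tangent frame at $x_0$ is matched under the differentials; such $\Lambda_\rho$ exists since $O(n,1)$ acts transitively on the orthonormal frame bundle of $\H^n$.

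The key structural input is the Gauss formula for hypersurfaces $\Sigma\subset\H^n\subset\R^{n,1}$: if $D$ denotes the flat connection on $\R^{n,1}$ and $\mathbf{Y}$ denotes the position vector, then for tangent fields $X,Z$ on $\Sigma$,
\[
D_X D_Z \mathbf{Y} \;=\; \nabla^{g_\rho}_X Z \;-\; A_0^{\mathbf{Y}}(X,Z)\,\nu_0^{\mathbf{Y}} \;+\; g_\rho(X,Z)\,\mathbf{Y},
\]
and analogously for $\mathbf{V}$ with the exact geodesic-sphere data $\bar g_\rho:=\sinh^2\rho\,h_0$ and shape operator $\coth\rho\cdot\mathrm{Id}$. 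Lemma \ref{l-AH-4} yields $A_0^{\mathbf{Y}}(X,Z)=\coth\rho\,g_\rho(X,Z)+O(e^{-\tau\rho})$, Lemma \ref{l-AH-2}(i) gives the starting alignment $|\nu_0^{\mathbf{Y}}-\nu_0^{\mathbf{V}}|=O(e^{-\tau\rho})$ (propagated globally by integrating $D_X\nu_0=-A_0^\sharp X$ along geodesics), and Remark \ref{rmk-grho} together with Definition \ref{d-AH} gives $g_\rho-\bar g_\rho=O(e^{-\tau\rho})$ with matching Christoffel differences. Subtracting the two Gauss identities, with $W:=\Lambda_\rho\mathbf{Y}-\mathbf{V}$, produces a second-order system
\[
D_X D_Z W \;=\; \bar g_\rho(X,Z)\,W \;+\; L(W,dW) \;+\; F(X,Z),
\]
with $|F|_{\R^{n,1}}\le C\,e^{(-\tau+1)\rho}$ because every error term multiplies either $\mathbf{V}$ or $\mathbf{Y}$ (whose $\R^{n,1}$-norms are of order $e^\rho$) by an $O(e^{-\tau\rho})$ factor, while the Christoffel and $\nu_0$ contributions are genuinely of order $e^{-\tau\rho}$.

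Integrating this system along $g_\rho$-geodesics $\gamma$ emanating from $x_0$, with initial data $W(x_0)=0$ and $dW(x_0)=0$ guaranteed by the choice of $\Lambda_\rho$, yields $|W|=O(e^{(-\tau+3)\rho})$ after two integrations over $t\in[0,O(e^\rho)]$: each integration of a bound of size $e^{(-\tau+1)\rho}$ picks up one factor of $e^\rho$ from the $g_\rho$-diameter of $\Sigma_\rho$. The main obstacle is controlling the homogeneous part $\bar g_\rho(X,Z)W+L(W,dW)$, since a crude Gronwall inequality over paths of length $e^\rho$ would produce exponentially bad constants. The resolution is that on the unperturbed geodesic sphere of $\H^n$ the corresponding homogeneous ODE is a Jacobi-type equation whose fundamental solutions are explicit, arising from restrictions of Killing vector fields of $\H^n$ to the position vector, and grow only polynomially in $g_\rho$-arc length; the $O(e^{-\tau\rho})$ perturbation preserves this polynomial growth, so Duhamel's principle applied to the exact fundamental matrix of the unperturbed Jacobi system delivers the stated $O(e^{(-\tau+3)\rho})$ bound uniformly on $\Sigma_\rho$.
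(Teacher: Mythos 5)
Your strategy---realizing both $\iota_\rho$ and the model sphere as $\R^{n,1}$-valued maps and controlling their difference through the ambient Gauss--Weingarten system along intrinsic geodesics---is genuinely different from the paper's proof, which instead pinches $\iota_\rho(\Sigma_\rho)$ between two concentric geodesic spheres of radius $\rho+O(e^{(-\tau+2)\rho})$ using the principal curvature bounds together with Blaschke's rolling theorem (\cite[Theorem 4.5]{H} and \cite[Propositions 3.1 and 3.2]{KwongTam}), and then aligns the angular coordinate $\vartheta(x)$ with $\theta(x)$ by comparing intrinsic distances and composing with an inductively constructed rotation fixing the center. As written, however, your argument has gaps at exactly the points where the difficulty of the lemma sits. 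First, you cannot arrange $dW(x_0)=0$: $d\iota_\rho$ carries a $g_\rho$-orthonormal frame to an orthonormal frame of $\H^n$, while $d\mathbf{V}$ does so for a $\bar g_\rho$-orthonormal frame, and $g_\rho\neq\bar g_\rho$; the best an ambient isometry achieves is $dW(x_0)=O(e^{-\tau\rho})$, and likewise the normals only match approximately. Relatedly, Lemma \ref{l-AH-2}(i) compares the $g$- and $g_0$-unit normals of $\Sigma_\rho$ \emph{inside $M$}; it says nothing about the normal of the abstract isometric embedding $\iota_\rho(\Sigma_\rho)\subset\H^n$, so it cannot be invoked to align $\nu_0^{\mathbf{Y}}$ with $\nu_0^{\mathbf{V}}$.

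Second, and more seriously, the decisive claim---that the homogeneous part of your linear system does not amplify errors over geodesics of $g_\rho$-length of order $\sinh\rho$---is asserted rather than proved, and this is where the entire content of the lemma lies: a crude Gronwall estimate yields a factor $\exp(Ce^{\rho})$ and destroys the conclusion. To close the argument one must write the first-order system in $(W,\dot W,\Delta\nu_0)$ explicitly, observe that the unperturbed coefficient matrix has eigenvalues $0,\pm i/\sinh\rho$ so that the propagator is bounded except for a single entry of size $O(\sinh\rho)$ pairing positions with velocities, conclude that a forcing of size $O(e^{(-\tau+1)\rho})$ contributes $O(\sinh^2\rho\cdot e^{(-\tau+1)\rho})=O(e^{(-\tau+3)\rho})$ after the two integrations, and verify that the $O(e^{-\tau\rho})$ perturbation of the coefficients produces a Gronwall factor $\exp(O(e^{(2-\tau)\rho}))=1+o(1)$, which requires $\tau>2$ (available since $\tau>n-1\ge 2$). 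None of these computations appear in your write-up, and without them the sentence ``the perturbation preserves this polynomial growth'' is a restatement of what must be shown, not a proof. The skeleton is salvageable, but as it stands the lemma is not established.
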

\begin{proof}
We proceed as in \cite{KwongTam}.
Let $\coth \varsigma_1=\max_{x\in \Sigma_\rho, 1\le j  \le n-1}\ko_j (x)$,
where as before, $\ko_j$ are the principal curvatures of $\iota_\rho(\Sigma_\rho)$.
By Lemma \ref{l-AH-4}, for large $\rho$, we have
$$
\coth\varsigma_1=\coth\rho+O(e^{-\tau\rho})>1
$$
since $\tau>2$. Therefore,
\bee
\begin{split}
\varsigma_1= & \ \frac12\ln\lf[\frac{\coth\rho+O(e^{-\tau\rho})+1}{\coth\rho+O(e^{-\tau\rho})-1}\ri]\\
%=&\frac12\ln\lf[\frac{e^{2\rho}\lf(1+O(e^{-\tau\rho})\ri)}{1+O(e^{(-\tau+2)\rho})}\ri]\\
= & \ \rho+O(e^{(-\tau+2)\rho}).
\end{split}
\eee
Similarly, let   $\coth\varsigma_2=\min_{x\in \Sigma_\rho, 1\le j \le n-1 }\ko_j (x)$, then $\coth\varsigma_2>1$ and
$$
\varsigma_2= \rho+O(e^{(-\tau+2)\rho}).
$$

By \cite[Theorem 4.5]{H} and  \cite[Proposition 3.1 and 3.2]{KwongTam},
%\cite{H,KwongTam},
we conclude that $\iota_\rho(\Sigma_\rho)$ lies between two geodesic balls with the same center with radii $\varsigma_{\text {\bf e}}\ge \varsigma_{\text {\bf i}}$ so that
\be\label{e-AH-estimate-5-0}
\varsigma_{\text {\bf e}}, \  \varsigma_{\text {\bf i}}= \rho+O(e^{(-\tau+2)\rho}).
\ee
Now, first choose an isometry $\Lambda_\rho$ so that the center of these  geodesic balls is at $o=(1,0,0,0)\in \H^n\subset \R^{n,1}$. Let  $\varsigma $ be the hyperbolic distance function from $o$.
Given $ x \in \Sigma_\rho$, let  $y(x) =\Lambda_\rho\circ\iota_\rho(x)$,
then by \eqref{e-AH-estimate-5-0},
\be\label{e-AH-estimate-5}
\begin{split}
\sinh \varsigma(y (x)) = & \ \sinh\rho+O(e^{(-\tau+3)\rho}), \\
  \cosh \varsigma(y (x))= & \ \cosh\rho+O(e^{(-\tau+3)\rho}),
  \end{split}
\ee
as $\rho\to\infty$.
For the simplicity of notation, we still denote $\Lambda_\rho\circ\iota_\rho$ by $\iota_\rho$.
Using geodesic polar coordinates in $\H^n$ with respect to $o=(1,0,0,0)$, for each $x\in \Sigma_\rho$, we
define $ \theta (x), \vartheta (x)   \in \mathbb{S}^{n-1} \subset T_o \H^n = \R^n $ respectively by
$$ x=\exp_o(\rho\theta(x)) \ \
 \mathrm{and} \ \
y (x)=\exp_o(\varsigma( y(x) ) \vartheta(x)).$$
We want to estimate $\vartheta(x)-\theta(x)$ as points in $\R^n$.
Let $d_\rho$ be the intrinsic distance function on $(\Sigma_\rho,g_\rho)$, $\wt d_\rho$ be the intrinsic distance of 
$\iota_\rho (\Sigma_\rho)$, and let $d_{\mathbb{S}^{n-1}}$ be the distance function on $\mathbb{S}^{n-1}$ with respect to the standard metric $h_0$.
Given  $x_1, x_2\in \Sigma_\rho$, we have
\bee
\begin{split}
\sinh\rho  \,d_{\mathbb{S}^{n-1}}(\theta(x_1),\theta(x_2))= & \ d_\rho(x_1,x_2)\\
=& \ \wt d_\rho(y(x_1),y(x_2))\\
=& \ [ \sinh\rho+O(e^{(-\tau+3)\rho}) ]d_{\mathbb{S}^{n-1}}(\vartheta(x_1),\vartheta(x_2))
\end{split}
\eee
by \eqref{e-AH-estimate-5}. Hence,
\be\label{e-AH-estimate-6}
d_{\mathbb{S}^{n-1}}(\vartheta(x_1),\vartheta(x_2))
= [1+O(e^{(-\tau+2)\rho}) ] d_{\mathbb{S}^{n-1}}(\theta(x_1),\theta(x_2)).
\ee
Next,  let $e_1=(1,0,\dots,0),\dots, e_n=(0,0,\cdots,1)\in \mathbb{S}^{n-1}\subset \R^n$.
By composing $\iota_\rho$ with an isometry  of $\H^n$ fixing $o$ and still
 denoting the resulting composition by $\iota_\rho$,  we may assume
$ \vartheta(\rho e_1)=e_1$
and, for each $i>1 $,  $\vartheta(\rho e_i )$ is a linear combination of $\{ e_1,\dots, e_i \}$
such that the   coefficient of $e_i $ is nonnegative.
For this $\iota_\rho$, we  claim that for $1\le j \le n$,
\be\label{e-AH-estimate-7}
d_{\mathbb{S}^{n-1}}(\vartheta(\rho e_j),e_j)=  O(e^{(-\tau+2)\rho}).
\ee
This is obvious true for $j=1$ because $
\vartheta(\rho e_1)=e_1$. Suppose \eqref{e-AH-estimate-7} is true for
$j = 1, \ldots, i $ where $ i < n $.
Write
$$
\vartheta ( \rho e_{i+1})=\sum_{j=1}^{i+1}a_je_j
$$
with $a_{i+1}\ge 0$.
For $1\le j\le i$, by the triangle inequality,
\bee
| d_{\mathbb{S}^{n-1}}(\vartheta(\rho e_{i+1}),e_j)- d_{\mathbb{S}^{n-1}}(\vartheta(\rho e_{i+1}),\vartheta(\rho e_j)|\le
d_{\mathbb{S}^{n-1}}(\vartheta(\rho e_j),e_j).
\eee
 Hence,  by \eqref{e-AH-estimate-6},  we have
 \be
 d_{\mathbb{S}^{n-1}}(\vartheta(\rho e_{i+1}),e_j)=\frac\pi2+  O(e^{(-\tau+2)\rho}),
 \ee
 which shows
 $$
 a_j=\cos (d_{\mathbb{S}^{n-1}}(\vartheta(\rho e_{i+1}),e_j))= O(e^{(-\tau+2)\rho})
 $$
 for all $j  \in \{1, \ldots , i \}$.  Thus,
 $$
 a_{i+1}=\lf(1-\sum_{j=1}^2a_j^2\ri)^\frac12=1+O(e^{(-\tau+2)\rho})
 $$
 because $a_{i+1}\ge 0$. Therefore,  \eqref{e-AH-estimate-7} holds  for $j = i+1$
 and hence it is true  for all $1\le j \le n$.
 Now let $x\in \Sigma_\rho$,   by  \eqref{e-AH-estimate-7} and \eqref{e-AH-estimate-6},
 \bee
\begin{split}
 d_{\mathbb{S}^{n-1}}(\vartheta(x),e_j )= & \ d_{\mathbb{S}^{n-1}}(\vartheta(x),  \vartheta( \rho e_j))+O(e^{(-\tau+2)\rho})\\
 =& \ d_{\mathbb{S}^{n-1}}(\theta(x), e_j)+O(e^{(-\tau+2)\rho}).
\end{split}
\eee
Hence, this implies
\be\label{e-AH-estimate-8}
\begin{split}
\vartheta(x)-\theta(x)=& \
\sum_{i=1}^n \lf[ \cos(d_{\mathbb{S}^{n-1}}(\vartheta(x),e_i))- \cos(d_{\mathbb{S}^{n-1}}(\theta(x), e_i))\ri]e_i\\
= & \ O(e^{(-\tau+2)\rho}).
\end{split}
\ee
The lemma now follows from \eqref{e-AH-estimate-5},  \eqref{e-AH-estimate-8} and the fact
$$ V^{(0)} (x) = \cosh \rho ,   \ V^{(i)}  (x) =\theta^{(i)}(x)\sinh \rho, $$
$$ ( \iota_\rho )^{(0)} (x) =\cosh\varsigma ( y(x) ),   \  ( \iota_\rho)^{(i)}  (x) =\vartheta^{(i)}(x)\sinh\varsigma( y(x) )$$
for $ i \ge 1$.
\end{proof}

Now we are ready to prove Theorem \ref{t-AH-1}.

\begin{proof}[Proof of Theorem \ref{t-AH-1}]
 (a) and (b) of  Part (I)  are  direct consequences of Lemma \ref{l-AH-4-5}, \ref{l-AH-4-6}
 and Theorem \ref{t-mass} (ii).

As for Part (II), (a)
%Claim (a) of Part II
follows  from  Lemma \ref{l-AH-4}, \ref{l-AH-5}, \ref{l-AH-6} and Theorem \ref{t-mass} (ii).
To prove   (b),
by Lemma \ref{l-AH-4},  \ref{l-AH-5}, \ref{l-AH-6}  and \ref{l-AH-7},
 there exist isometric embeddings
 $\iota_\rho = (\mathbf{x}^0, \ldots, \mathbf{x}^n ):  (\Sigma_\rho, g_\rho ) \rightarrow  \H^n \subset \R^{n,1}$
 such that, for each $ i =0, \ldots, n$,
 \be \label{eq-final}
 \begin{split}
& \  \int_{\Sigma_\rho} ( H_0 - H) \mathbf{x}^i \, d \sigma_\rho \\
 =  & \  \int_{\Sigma_\rho} ( H_0 - H) V^{(i)} \, d \sigma_\rho  +  \int_{\Sigma_\rho} ( H_0 - H) ( \mathbf{x}^i - V^{(i)})  \, d \sigma_\rho \\
 = & \ - \frac{1}{n-2} \int_{\Sigma_\rho} G^g_{-1} (X^{(i)}, \nu)  \, d \sigma_\rho + o(1)
 \end{split}
 \ee
 provided $ 2\tau -3  > n-1$. But this is satisfied if  $ \tau > n-1 $ when $ n\ge 4$ and
 $ \tau > \frac{5}{2} $ when $ n =3$.
 Hence,  (b)  follows from \eqref{eq-final} and Theorem \ref{t-mass} (ii).
\end{proof}

%\newpage


\begin{thebibliography}{1000}

\bibitem{ADM61} Arnowitt,  R.;  Deser, S., and Misner,  C. W.,
{\sl Coordinate invariance and energy expressions in general relativity}, Phys. Rev. (2) \textbf{122} (1961), 997--1006.

\bibitem{Ashtekar-Hansen} 
Ashtekar, A.;  Hansen, R. O., 
{\sl A unified treatment of null and spatial infinity in general relativity. I. Universal structure, asymptotic symmetries, and conserved quantities at spatial infinity}, J. Math. Phys. \textbf{19} (1978), no. 7, 1542--1566. 


\bibitem{BY1} Brown, J. D.; York, J. W., Jr.,
{\sl Quasilocal energy in general relativity},
in {Mathematical aspects of classical field theory (Seattle, WA,   1991)},
volume 132 of {Contemp. Math.}, pages 129--142. Amer. Math. Soc., Providence, RI, 1992.

\bibitem{BY2}
Brown, J. D.; York, J. W., Jr.,
{\sl  Quasilocal energy and conserved charges derived from the   gravitational action},
{Phys. Rev. D (3)}, 47(4):1407--1419,1993.

\bibitem{Chrusciel}
Chru\'sciel, P.,
{\sl A remark on the positive energy theorem}, Class. Quantum Grav.  \textbf{3} (1986), L115--L121.

\bibitem{ChruscielHerzlich}
  Chru\'sciel, P.; Herzlich, M.,
{\sl The mass of asymptotically hyperbolic Riemannian manifolds}, Pacific J. Math. {\bf 212} (2003), no. 2, 231--264.

\bibitem{FanKwong} Fan, X.-Q.; Kwong, K.-K., {\sl A property of the Brown-York mass in Schwarzschild manifolds}, J. Math. Anal. Appl.  \textbf{400}  (2013),  no. 2, 615--623.

\bibitem{FanShiTam}
Fan, X.-Q., Shi, Y.-G.; Tam, L.-F., {\sl Large-sphere and small-sphere limits of the Brown-York mass},
Comm. Anal. Geom. \textbf{17}(2009), no. 1, 37--72.

\bibitem{Hawking-mass}  Hawking, S.W.,
{\sl Gravitational radiation in an expanding universe}, J. Math. Phys. \textbf{9} (1968), 598--604.

\bibitem{Herzlich} Herzlich, M., {\sl Computing asymptotic invariants with the Ricci tensor on asymptotically flat and hyperbolic manifolds}, arXiv:1503.00508.

\bibitem{H}  Howard, R.,
{\sl Blaschke's rolling theorem for manifolds with boundary}, Manuscripta Mathematica
  \textbf{99} (1999), no. 4, 471--483.

\bibitem{Huang-ICCM} Huang, L.-H.,
{\sl On the center of mass in General Relativity,} in Fifth International Congress of Chinese mathematicians,
Part I, 2, AMS/IP Stud. Adv. Math., vol. 51, Amer. Math. Soc., Providence, RI, 2012, 575--591.

\bibitem{Huisken1986}   Huisken, G., {\sl
Contracting convex hypersurfaces in Riemannian manifolds by their mean curvature},
Invent. Math. \textbf{84} (1986),  no. 3, 463--480.

\bibitem{KLBG}
Klingenberg, W., {\sl  A course in differential geometry},
Translated from the German by David Hoffman. Graduate Texts in
Mathematics, Vol. \textbf{51}, Springer-Verlag, New
York-Heidelberg, 1978.

\bibitem{KwongTam}  Kwong, K.-K.;  Tam, L.-F., {\sl  Limit of quasilocal mass integrals in asymptotically hyperbolic manifolds}, Proc. Amer. Math. Soc. \textbf{141}(2013), 313--324.

\bibitem{LW}
Li, Y.; Weinstein, G., {\sl A priori bounds for co-dimension one isometric embeddings}, Amer. J. Math. \textbf{121}(1999), no. 5, 945--965.

\bibitem{MiaoTam2015} Miao, P.; Tam, L.-F., {\sl Evaluation of the ADM mass and center of mass via the Ricci tensor},
Proc. Amer. Math. Soc. \textbf{144} (2016), no 2, 753--761.


\bibitem{Michel}
Michel, B., {\sl Geometric invariance of mass-like invariants}, J. Math. Phys. \textbf{52} (2011), 052504.


\bibitem{Neves} Neves, A.,
{\sl Insufficient convergence of inverse mean curvature flow on asymptotically hyperbolic manifolds},
J. Differential Geom.  \textbf{84}  (2010), no. 1, 191--229.

  \bibitem{Nirenberg}
Nirenberg, L., {\sl The Weyl and Minkowski problem in differential geometry in the large},
Commun. Pure Appl. Math. \textbf{6} (1953), no. 3, 337--394.

 \bibitem{P} Pogorelov,   A.,
{\sl Some results on surface theory in the large}, Advances in Math.
  \textbf{1} (1964), no. 2, 191--264.


\bibitem{ShiTam02} Shi, Y.-G.;  Tam,  L.-F.,
{\sl Positive mass theorem and the boundary behaviors of compact manifolds with nonnegative scalar curvature},
J. Differential Geom. \textbf{62}  (2002),  no.1, 79--125.


\bibitem{ShiTam2007}
Shi, Y.-G.; Tam,  L.-F.,
{\sl Rigidity of compact manifolds and positivity of quasi-local mass}, Class. Quantum Grav.,  {\bf 24} (2007), no. 9, 2357--2366.

\bibitem{ShiWangWu09} Shi, Y.-G.; Wang, G.; Wu, J.,
{\sl On the behavior of quasi-local mass at the infinity along nearly round surfaces},
Ann. Glob. Anal. Geom, \textbf{36}(2009), no.4, 419--441.


\bibitem{WangWu2015} Wang, G.; Wu, J.,
{\sl Chern's magic form and the Gauss-Bonnet-Chern mass},
arXiv:1510.03036.


\bibitem{WangYau2007}
 Wang, M.-T.; Yau, S.-T.,
{\sl A generalization of Liu-Yau's quasi-local mass}, Comm. Anal. Geom. \textbf{15} (2007), no. 2, 249--282.

\bibitem{Wang}
Wang, X., {\sl The mass of asymptotically hyperbolic manifolds}, J. Differential Geom.  \textbf{57} (2001), no. 2, 273--299.

\bibitem{Zhang}
Zhang, X., {\sl A definition of total energy-momenta and the positive mass theorem on asymptotically hyperbolic 3 manifolds I},  Commun. Math. Phys.  \textbf{249} (2004), no. 3, 529--548.

%\bibitem{Witten81}  Witten, E.,
%{\sl A new proof of the positive energy theorem},
%Comm. Math. Phys. \textbf{80} (1981), no.3, 381-402.

\end{thebibliography}
\end{document}